\definecolor{trueblue}{rgb}{0.0, 0.45, 0.81}
\newcommand{\EEE}{\color{black}}
\newcommand{\dx}{\, {\rm d}x}
\newcommand{\e}{\varepsilon}
\DeclareMathOperator{\conv}{conv}
\DeclareMathOperator{\dist}{dist}
\newcommand{\eg}{{\it e.g.}, }
\newcommand{\ie}{{\it i.e.}, }
\theoremstyle{plain}
\newtheorem{theorem}{Theorem}[section]
\newtheorem{lemma}[theorem]{Lemma}
\newtheorem{proposition}[theorem]{Proposition}
\numberwithin{equation}{section}
\newcommand{\N}{\mathbb{N}}
\newcommand{\Z}{\mathbb{Z}}
\newcommand{\Q}{\mathbb{Q}}
\newcommand{\R}{\mathbb{R}}
\newcommand{\NN}{\mathcal{N}}
\renewcommand{\S}{\mathbb{S}}
\renewcommand{\L}{\mathcal{L}}
\renewcommand{\H}{\mathcal{H}}
\newcommand{\dH}{\, {\rm d}\H^1}
\newcommand{\T}{\mathcal{T}}
\newcommand{\SF}{\mathcal{SF}}
\newcommand{\tb}{\hat{e}}
\newcommand{\x}{{\times}}
\newcommand{\defas}{:=}
\newcommand{\wto}{\rightharpoonup}
\newcommand{\wsto}{\overset{*}{\wto}}
\newcommand{\js}[1]{J_{#1}}
\newcommand{\cnu}{12}
\newcommand{\sinof}[1]{\sin\big(#1\big)}
\newcommand{\cosof}[1]{\cos\big(#1\big)}
\newcommand{\wcont}{\subset\subset}
\newcommand{\mres}{\mathbin{\vrule height 1.6ex depth 0pt width 0.13ex\vrule height 0.13ex depth 0pt width 1.3ex}}
\newenvironment{step}[1]{\textbf{Step #1.}}{}
\theoremstyle{definition}
\theoremstyle{remark}
\newtheorem{remark}[theorem]{Remark}
\newcommand{\pos}{\mathrm{pos}}
\renewcommand{\neg}{\mathrm{neg}}
\renewcommand{\tilde}{\widetilde}
\newcommand{\sm}{\setminus}
\newcommand{\cf}{{\it cf.\ }}
\DeclareMathOperator{\argmin}{argmin}
\DeclareMathOperator{\diam}{diam}
\DeclareMathOperator{\interior}{int}
\DeclareMathOperator{\sign}{sign}
\renewcommand{\d}{\, \mathrm{d}}
\begin{document}

\title[The antiferromagnetic $XY$ model on the triangular lattice]{The antiferromagnetic $XY$ model on the triangular lattice: Chirality transitions at the surface scaling}

\author{Annika Bach}
\address[Annika Bach]{TU M\"unchen, Germany}
\email{annika.bach@ma.tum.de}

\author{Marco Cicalese}
\address[Marco Cicalese]{TU M\"unchen, Germany}
\email{cicalese@ma.tum.de}

\author{Leonard Kreutz}
\address[Leonard Kreutz]{WWU M\"unster, Germany}
\email{lkreutz@uni-muenster.de}

\author{Gianluca Orlando}
\address[Gianluca Orlando]{TU M\"unchen, Germany}
\email{orlando@ma.tum.de}

\keywords{$\Gamma$-convergence, Frustrated lattice systems, Chirality transitions.}


\begin{abstract}
We study the discrete-to-continuum variational limit of the antiferromagnetic $XY$ model on the two-dimensional triangular lattice. The system is fully frustrated and displays two families of ground states distinguished by the chirality of the spin field. We compute the $\Gamma$-limit of the energy in a regime which detects chirality transitions on one-dimensional interfaces between the two admissible chirality phases.  
\end{abstract}

\subjclass[2010]{49J45, 49M25, 82B20, 82D40.} 
\maketitle

\setcounter{tocdepth}{1}

\maketitle

\section{Introduction}\label{section:introduction}

Ordering problems in magnetism have been extensively studied by both the physics and the mathematics communities. Researchers have been attracted by the rich phase diagrams and critical behaviors of magnetic models which are often the result of difficult-to-detect optimization effects taking place at several energy and length scales. The reason for such a complex behavior can be traced back to the presence of many competing mechanisms which give rise to frustration. {\em Frustration} in the context of spin systems (here, as it is customary in the statistical mechanics literature, we will often refer to magnets as to spins) refers to the situation where spins cannot find an orientation that simultaneously minimizes all the pairwise exchange interactions. Such interactions are said to be ferromagnetic or antiferromagnetic if they favour alignment or antialignment, respectively. Often frustration occurs in those systems where spins are subject to conflicting short range ferromagnetic and long range antiferromagnetic interactions, as when modulated phases appear (see, \eg the expository paper \cite{seuand}). For antiferromagnetic lattice systems, that is systems of lattice spins subject to only antiferromagnetic interactions, frustration can also stem from the relative spatial arrangement of spins induced by the geometry of the lattice. In this case frustration is often referred to as geometric frustration. As a consequence of geometric frustration magnetic compounds show complex geometric patterns that induce often unexpected effects whose understanding is one of the primary subjects in statistical and condensed matter physics as it can help to better explain the nature of phase transitions in magnetic materials~\cite{Die, LJNL, MS}. From a mathematical perspective, several interesting questions can be addressed. In this paper we are interested in the variational coarse graining of the system, in the line of what is by now addressed to as the ``discrete-to-continuum variational analysis of discrete systems''. Within this line of investigation the analysis of spin systems turns out to be a difficult nonlinear optimization problem requiring the combination of several methods ranging from simple discrete optimization procedures to sophisticated techniques in geometric measure theory and the calculus of variations. While models where frustration is induced by the competition of ferromagnetic/antiferromagnetic interactions have been already studied from a variational perspective (see, \eg~\cite{AliBraCic, GiuLebLie, GiuLieSer-11, GiuSer, CicSol, BraCic, SciVal, CicForOrl, DanRun}), what we present here is the first discrete-to-continuum result for a geometrically frustrated system.

We carry out the discrete-to-continuum variational analysis (at zero temperature) of a geometrically frustrated spin model in a specific energetic regime and we characterize the effective behavior of its low-energy states, that is states that can deviate from the global minimizers (ground states) by a certain small amount of energy. More precisely we consider a 2-dimensional nearest-neighbors antiferromagnetic planar spin model on the triangular lattice, \cf \cite[Chapter~1]{Die}. Despite being considered one of the most elementary geometrically frustrated spin models, its variational analysis turns out to be quite a delicate task. More in detail, we let $\e > 0$ be a small parameter and we consider the triangular lattice $\L_\e$ with spacing $\e$ (see Subsection~\ref{subsec:lattice} for the precise definition). To every spin field $u \colon \L_\e \to \S^1$ we associate the energy 
\begin{equation} \label{intro:AFXY energy}
    \sum_{\substack{\e \sigma, \e \sigma' \in \L_\e \\ |\sigma - \sigma'| = 1}} \langle u(\e \sigma) , u(\e \sigma') \rangle \, ,
\end{equation}
where $\langle \cdot , \cdot \rangle$ denotes the scalar product. (Below, the energy will be restricted to bounded regions in the plane.) This model is antiferromagnetic since the interaction energy between two neighboring spins is minimized by two opposite vectors. Such an order in the magnetic alignment, also known as antiferromagnetic order, is frustrated by the geometry of the triangular lattice, which inhibits a configuration where each pair of neighboring spins are opposite or, equivalently, where each interaction is minimized. This suggests that the antiferromagnetic $XY$ model depends substantially on the geometry of the lattice, which affects the structure of the ground states, the choice of the relevant variables and of the energy scalings. Notice, for example, that on a square lattice the system would not be frustrated, as opposite vectors distributed in a checkerboard structure minimize each interaction. In fact, on the square lattice a straightforward change of variable allows one to recast the antiferromagnetic $XY$ model into the ferromagnetic $XY$ model~\cite[Remark~3]{AliCic}, which is driven by an energy with neighboring interactions $- \langle u(\e \sigma), u(\e \sigma')\rangle$. The latter model has been thoroughly investigated in the last decade both on the square lattice~\cite{AliCic, AliCicPon, AliDLGarPon, CicOrlRuf1, CicOrlRuf2} and on the triangular lattice~\cite{CanSeg,DL}. Independently of the geometry of the lattice, it has been proved that spin fields that deviate from the ground states by an amount of energy which diverges logarithmically as $\e$ vanishes form of topological charges (vortex-like singularities of the spin field as those arising in the Ginzburg-Landau model~\cite{BBH, SS}), when subject to boundary conditions or external magnetic fields.

We now come back to our model~\eqref{intro:AFXY energy}. In order to identify the relevant variable of the system, we first need to characterize the ground states of the antiferromagnetic $XY$ system in~\eqref{intro:AFXY energy}. To this end it is convenient to rearrange the indices of the sum in~\eqref{intro:AFXY energy} and to recast the energy as a sum over all triangular plaquettes $T$ with vertices $\e i, \e j, \e k \in \L_\e$
\begin{equation} \label{intro:recasting the energy}
    \sum_{T} \big( \langle u(\e i),  u(\e j) \rangle + \langle u(\e j) , u(\e k)\rangle  +  \langle u(\e k),  u(\e i) \rangle  \big) = \frac{1}{2} \sum_{T} \big( |u(\e i) + u(\e j) + u(\e k) |^2 - 3 \big)\, .
\end{equation}
In each triangle $T$ the energy is minimized (and is equal to $-\frac{3}{2}$) if and only if $u(\e i) + u(\e j) + u(\e k) = 0$, namely, when the vectors of a triple $(u(\e i),u(\e j),u(\e k))$ point at the vertices of an equilateral triangle. By the $\S^1$-symmetry, every rotation of a minimizing triple $(u(\e i),u(\e j),u(\e k))$ is minimizing, too. The ground states in this model feature an additional symmetry, usually referred to as $\Z_2$-symmetry: triple obtained by from a minimizing triple via a permutation of negative sign as $(u(\e i),u(\e k),u(\e j))$ is also minimizing. This determines two families of ground states, \ie spin fields for which the energy is minimized in each plaquette, see Figure~\ref{fig:ground states}. These two families can be distinguished through the {\em chirality}, a scalar which quantifies the handedness of a certain spin structure. To define the chirality of a spin field $u$ in a triangle $T$, we need a consistent ordering of its vertices $\e i$, $\e j$, $\e k$. We assume that $\e i \in \L^1_\e$, $\e j \in \L^2_\e$, $\e k \in \L^3_\e$, where $\L^1_\e$, $\L^2_\e$, $\L^3_\e$ are the sublattices as in Figure~\ref{fig:ground states}, and we set (see~\eqref{def: chirality} for the precise definition)
\begin{equation*}
    \chi(u,T) = \frac{2}{3\sqrt{3}}\big(u(\e i)\x u(\e j)+u(\e j)\x u(\e k)+u(\e k)\x u(\e i) \big)  \in [-1,1]  \, ,
\end{equation*}
where the symbol $\times$ stems for the cross product. We denote by $\chi(u) \in L^\infty(\R^2)$ the function equal to $\chi(u,T)$ on the interior of each plaquette $T$. The ground states are exactly those configurations $u$ that satisfy either $\chi(u) \equiv 1$ or $\chi(u) \equiv - 1$, \cf Remark~\ref{rem:properties:chirality}.

\begin{figure}[H]
 \includegraphics{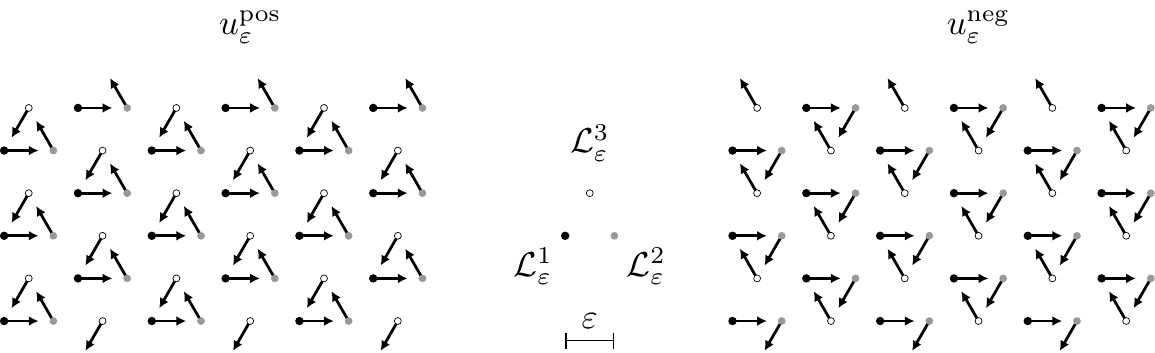}
\caption{A ground state $u_\e^\pos$ with positive chirality and a ground state $u_\e^\neg$ with negative chirality. Any other ground state of the system is obtained by composing one of these two configurations with a constant rotation. In the center: three points of the sublattices $\L^1_\e$, $\L^2_\e$, and $\L^3_\e$ in black, gray, and white, respectively.}
\label{fig:ground states}
\end{figure}

In this paper we analyze the energy regime at which the two families of ground states coexist and at the same time the energy of the system concentrates at the interface between the two chiral phases $\{\chi=1\}$ and $\{\chi=-1\}$. We fix $\Omega \subset \R^2$ open, bounded, and with Lipschitz boundary and we consider the energy~\eqref{intro:recasting the energy} restricted to $\Omega$, \ie computed only on plaquettes of $\L_\e$ contained in $\Omega$. We refer the energy to its minimum by removing the energy of the ground states ($-\frac{3}{2}$ for each plaquette) and we divide it by the number of lattice points in $\Omega$ (of order $1/\e^{2}$). We obtain (up to a multiplicative constant) the energy per particle given by
\begin{equation*}
    E_\e(u) = \sum_{T \subset \Omega} \e^2 |u(\e i) + u(\e j) + u(\e k) |^2.
\end{equation*}
We are interested to the asymptotic behavior of the energy above as $\e \to 0$ on sequences of spin fields $u_\e \colon \L_\e \to \S^1$ that can deviate from ground states yet satisfying a bound $E_\e(u_\e) \leq C \e$. To this end we define the energy $F_\e(u) := \frac{1}{\e} E_\e(u)$ and study sequences of spin fields with equibounded $F_{\e}$ energy. Due to the $\S^1$-symmetry, the energy at this regime cannot distinguish ground states with the same chirality, so that the relevant order parameter of the model is, in fact, not the spin field but its chirality: in Proposition~\ref{prop:compactness} we prove that a sequence $(u_\e)$ satisfying $F_\e(u_\e) \leq C$ admits a subsequence (not relabeled) such that $\chi(u_\e) \to \chi$ strongly in $L^1(\Omega)$ for some $\chi \in BV(\Omega;\{-1,1\})$, \ie the admissible chiralities in the continuum limit are~$-1$ and $1$ and the chirality phases $\{\chi = -1\}$ and $\{\chi = 1\}$ have finite perimeter in $\Omega$. This  suggests that the model shares similarities with systems having finitely many phases, such as Ising models~\cite{CafDLL05, AliBraCic, Pre} or Potts models~\cite{CicOrlRuf3}. However, a crucial difference consists in the fact that in our case the variable that shows a phase transition is not the spin variable itself, but the chirality, which depends on the spin field in a nonlinear way. This is a source of difficulties that will be explained below. 

To describe the asymptotic behavior of the system it is convenient to introduce the functionals depending only on functions $\chi \in L^1(\Omega)$ defined (with a slight abuse of notation) by $F_\e(\chi) := \inf \{ F_\e(u) \colon \ u \colon \L_\e \to \S^1 \text{ such that } \chi = \chi(u,T) \text{ on every } T \subset \Omega \}$ (equal to $+\infty$ if $\chi$ is not the chirality of a spin field). The main result in this paper is Theorem~\ref{maintheorem}, where we prove that the $\Gamma$-limit of $F_\e$ with respect to the $L^1$-convergence is an anisotropic surface energy given by 
\begin{equation*}
    F(\chi) = \int_{\js{\chi}} \varphi(\nu_\chi) \, \mathrm{d}\mathcal{H}^1 \quad \text{for } \chi \in BV(\Omega;\{-1,1\})\, ,
\end{equation*}
extended to $+\infty$ otherwise in $L^1(\Omega)$, where $J_\chi$ is the interface between $\{\chi = -1\}$ and $\{\chi = 1\}$ and $\nu_\chi$ is the normal to $J_\chi$. The density $\varphi$ is given by the following asymptotic formula
\begin{equation} \label{intro:cell formula}
    \varphi(\nu) = \lim_{\varepsilon \to 0} \min\left\{ F_\varepsilon(u,Q^\nu) \colon u = u_\e^\pos \text{ on } \partial_\varepsilon^+ Q^\nu\, ,\ u=u_\e^\neg \ \text{on}\ \partial_\e^- Q^\nu\right\}\, ,
\end{equation}
where $Q^\nu$ is the square with one side orthogonal to $\nu$, $u_\e^\pos$ and $u_\e^\neg$ are the ground states depicted in Figure~\ref{fig:ground states}, and $\partial_\e^\pm  Q^\nu$ are a discrete version of the top/bottom parts of $\partial Q^\nu$. 
Asymptotic formulas like~\eqref{intro:cell formula} are common in discrete-to-continuum variational analyses and are often used to represent $\Gamma$-limits of discrete energies~\cite{AG,BraCic, BraPiat, BraKre, BacBraCic, FriKreSch}. However, proving an asymptotic lower bound with the density~\eqref{intro:cell formula} for this model requires additional care and is the technically most demanding contribution of this paper. We conclude this introduction by describing the main difficulties that arise in the proof. 

Via a classical blow-up argument (see Proposition~\ref{prop:lowerbound}) we obtain an asymptotic lower bound with the surface density
\begin{equation} \label{intro:psi}
    \psi(\nu) = \inf\big\{\liminf_{\e\to 0}F_\e(u_\e,Q^\nu)\colon\chi(u_\e)\to\chi_\nu\ \text{in}\ L^1(Q^\nu)\big\} \, ,
\end{equation}
where $\chi_\nu$ is the pure-jump function which takes the values $\chi_\nu(x)=\pm 1$ for $\pm \langle x, \nu \rangle > 0$. Hence, the proof of the asymptotic lower bound boils down to the proof of the inequality $\psi(\nu) \geq \varphi(\nu)$. To obtain the latter inequality, we need to modify sequences $(u_\e)$ with $\chi(u_\e)\to\chi_\nu$ in $L^1(Q^\nu)$ without increasing their energy in such a way that they attain the boundary conditions required in~\eqref{intro:cell formula}. A common approach to deal with this modification consists in selecting (via a well-known slicing/averaging argument due to De Giorgi) a low-energy frame contained in $Q^\nu$ and close to $\partial Q^\nu$ where the sequence can be modified using a cut-off function that interpolates to the boundary values. In our problem, instead, a cut-off modification of $\chi(u_\e)$ may generate a sequence of functions that are not chiralities of spin fields (and thus have infinite energy $F_\e$). Consequently, we have to operate directly on the sequence~$(u_\e)$, on whose convergence we have no information due to the invariance of the system under rotation of the spin field (the $\S^1$-symmetry discussed above).
We turn however the $\S^1$-symmetry to our advantage to define the needed modification. Inside a one-dimensional slice of~$\L_\e$, a spin field close to a ground state in one triangle can be slowly rotated to reach any other ground state with the same chirality by paying an amount of energy proportional to the energy in the starting triangle (see Lemma~\ref{lemma:1d interpolation}). This one-dimensional construction can then be reproduced in the whole $Q^{\nu}$ starting from triangles in a low-energy frame close to $\partial Q^\nu$ in such a way that the modified spin field attains the fixed ground states $u_\e^\pos$ and $u_\e^\neg$ at the (discrete) boundary. However, for this procedure to be successful, the usual slicing/averaging method to find a low-energy frame close to $\partial Q^\nu$ is not enough. 
We need to improve it and to find a frame with a better (smaller) energy bound. To this end, we proceed as follows. In Lemma \ref{lem:independence} we show that 
$\psi(\nu)$ can be equivalently defined using in place of $Q^\nu$ any rectangle coinciding with $Q^\nu$ along the interface, but with arbitrarily small height (similar results appeared in different contexts, \eg\cite{CafDLL01, CafDLL05, ConFonLeo, ChaGolNov, ConGarMas, ConSch, FriKreSch}). Hence the energy of any sequence $(u_\e)$ admissible for~\eqref{intro:psi} concentrates arbitrarily close to the jump set of $\chi_\nu$, \ie the interface $\{\langle x,\nu\rangle=0\}$. With this result at hand, in Lemma~\ref{lemma:choosing strip} we can apply the averaging method with the advantage of knowing that in most of the space the total energy is going to vanish, thus finally deducing the existence of a frame close to $\partial Q^\nu$ with the wished (small enough) energy bound. 
Even at this point, to reproduce the one-dimensional interpolation along this frame requires additional care. In fact, to conclude the argument one still needs to prove that the winding number of the spin field in the low-energy frame can be properly controlled (Step $3$ of Proposition \ref{prop:psi is phi}).  

\parskip1.3mm

\section{Setting of the problem and statement of the main result}\label{section:notation}

\subsection{General notation}
Throughout this paper $\Omega\subset\R^2$ is an open, bounded set with Lipschitz boundary. For every $A\subset\R^2$ measurable we denote by $|A|$ its $2$-dimensional Lebesgue measure. With $\H^1$ we indicate the $1$-dimensional Hausdorff measure in $\R^2$. Given two points $x,y \in \R^2$ we use the notation $[x;y] := \{ \lambda x + (1-\lambda) y \colon \lambda \in [0,1] \}$ for the segment joining $x$ and $y$.  The set $\S^1\defas\{\nu\in\R^2\colon|\nu|=1\}$ is the set of all 2-dimensional unit vectors. For every such vector $\nu=(\nu_1,\nu_2)\in\S^1$ we denote by $\nu^\perp\defas(-\nu_2,\nu_1)\in\S^1$ the unit vector orthogonal to $\nu$ obtained by rotating $\nu$ counterclockwise by $\pi/2$. Given $v,w \in \S^1$ we denote by $\langle v, w\rangle$ their scalar product and by $v \x w =-\langle v, w^\perp \rangle$ their cross product. We denote by $\iota$ the imaginary unit in the complex plane. It will be often convenient to write vectors in $\S^1$ as $\exp(\iota \theta)$, $\theta \in \R$. We denote by $R_{\ell,h}^\nu$ the rectangle of length $\ell>0$ and height $h>0$ with two sides orthogonal to $\nu \in \S^1$ given by 
\begin{equation*}
    R_{\ell,h}^\nu\defas\{x\in\R^2\colon |\langle x,\nu^\perp\rangle|<\ell/2 \, ,\ |\langle x,\nu\rangle|<h/2\}\, ,
    \end{equation*}
extending the definition to the case $\ell = +\infty$ by setting $R_{\infty,h}^\nu\defas\{x\in\R^2\colon |\langle x,\nu\rangle|<h/2\}$. Given $\rho>0$ we define the cube centered at the origin with side length $\rho$ and one face orthogonal to $\nu$ by $Q_\rho^\nu\defas R_{\rho,\rho}^\nu$. For $\rho=1$ we simply write $Q^\nu$ instead of $Q_1^\nu$. By $L^\nu\defas\{x\in\R^2\colon\langle x,\nu\rangle=0\}$ we denote the line orthogonal to $\nu$ passing through the origin, while $H^\nu_+\defas\{x\in\R^2\colon\langle x,\nu\rangle\geq 0\}$ and $H^\nu_-\defas\R^2\setminus H^\nu_+$ stand for the two half spaces separated by $L^\nu$. 
Given $x_0\in\R^2$ we set $Q_\rho^\nu(x_0)\defas x_0+Q_\rho^\nu$, $R_{\ell,h}^\nu(x_0)\defas x_0+R_{\ell,h}^\nu$, $L^\nu(x_0)\defas x_0+L^\nu$ and $H^\nu_\pm(x_0)\defas x_0+H^\nu_\pm$.

\subsection{Triangular lattices and discrete energies}\label{subsec:lattice}
 
In this paragraph we define the discrete energy functionals we consider in this paper. To this end we first define the triangular lattice $\L$. It is given by
\begin{align*}
\mathcal{L}:= \{z_1 \tb_1 + z_2 \tb_2\colon z_1,z_2 \in \mathbb{Z}\}\, ,
\end{align*}
with $\tb_1=(1,0)$, and $\tb_2=\frac{1}{2}(1,\sqrt{3})$. For later use, we find it convenient here to introduce $\tb_3\defas\tfrac{1}{2}(-1,\sqrt{3})$ as a further unit vector connecting points of $\L$ and to define three pairwise disjoint sublattices of $\L$, denoted by $\L^1$, $\L^2$, and $\L^3$ (see Figure~\ref{fig:ground states}), by
\begin{equation*} 
\L^1\defas\{z_1(\tb_1+\tb_2)+z_2(\tb_2+\tb_3)\colon z_1,z_2\in\Z\}\, ,\quad\L^2\defas\L^1+\tb_1\, ,\quad\L^3\defas\L^1+\tb_2\, .
\end{equation*}
Eventually, we define the family of triangles subordinated to the lattice $\L$ by setting
\begin{equation*}
\T(\R^2)\defas\big\{T=\conv\{i,j,k\}\colon i,j,k\in\L,\ |i-j|=|j-k|=|k-i|=1\big\}\, ,
\end{equation*}
where $\conv\{i,j,k\}$ denotes the closed convex hull of $i,j,k$. It is also convenient to introduce the families of upward/downward facing triangles 
\begin{equation*} 
    \T^\pm(\R^2) \defas \big\{T=\conv\{i,j,k\}\in \T(\R^2)  \colon i\in \L^1,j \in \L^2, k\in\L^3,\ \pm (j-i) \x (k-j) \x (i-k) > 0 \big\}\, .
\end{equation*}

For $\e>0$, we consider rescaled versions of $\L$ and $\T(\R^2)$ given by $\L_\e\defas\e\L$ and $\T_\e(\R^2)\defas\e\T(\R^2)$, $\T^\pm_\e(\R^2) \defas \e \T^\pm(\R^2)$. With this notation every $T\in\T_\e(\R^2)$ has vertices $\e i,\e j,\e k\in\L_\e$. The same notation applies to the sublattices, namely $\L_\e^\alpha\defas\e\L^\alpha$ for $\alpha\in\{1,2,3\}$. Given a Borel set $A\subset\R^2$ we denote by $\T_\e(A)\defas\{T\in\T_\e(\R^2)\colon T\subset A\}$ the subfamily of triangles contained in~$A$. Eventually, we introduce the set of admissible configurations as the set of all \textit{spin fields}
\begin{equation*} 
\SF_\e\defas\{u:\L_\e\to\S^1\}\, .
\end{equation*}
In the case $\e=1$ we set $\SF\defas \SF_1$. For $u\in\SF_\e$ we now define the discrete energies $F_\e(u)$ as follows: for every $T\in\T_\e(\R^2)$ we set
\begin{equation*}
F_\e(u,T)\defas\e|u(\e i)+u(\e j)+u(\e k)|^2\, ,
\end{equation*}
and we extend the energy to any Borel set $A\subset\R^2$ by setting
\begin{equation*} 
F_\e(u,A)\defas\sum_{T\in\T_\e(A)}F_\e(u,T)\,.
\end{equation*}
If $A=\Omega$ we omit the dependence on the set and write $F_\e(u):=F_\e(u,\Omega)$. 
\subsection{Chirality}
In this section we introduce the relevant order parameter to analyze the asymptotic behavior of $F_\e$, namely the chirality $\chi$. More in detail, given $u\in\SF_\e$ and $T=\conv\{\e i,\e j,\e k\}\in\T_\e(\R^2)$ with $i\in\L^1$, $j\in\L^2$ and $k\in\L^3$ we set
\begin{equation}\label{def: chirality}
\chi(u,T)\defas\frac{2}{3\sqrt{3}}\big(u(\e i)\x u(\e j)+u(\e j)\x u(\e k)+u(\e k)\x u(\e i) \big)\, .
\end{equation}
Moreover, we define $\chi(u)\colon \Omega\to\R$ by setting $\chi(u)(x)\defas \chi_\e(u,T)$ if $x\in\interior T$. Given $u\in\SF_\e$ and $T=\conv\{\e i,\e j,\e k\}\in\T_\e(\R^2)$ it is sometimes convenient to rewrite $\chi_\e(u,T)$ and $F_\e(u,T)$ in terms of the angular lift of $u$. More precisely, let $\theta(\e i),\theta(\e j),\theta(\e k)\in\R$ be such that $u(\e\alpha)=\exp(\iota\theta(\e \alpha))$, $\alpha\in\{i,j,k\}$. Then
\begin{align}
\chi(u,T) &=\frac{2}{3\sqrt{3}}\Big(\sinof{\theta(\e j)-\theta(\e i)}+\sinof{\theta(\e k)-\theta(\e j)}+\sinof{\theta(\e i)-\theta(\e k)}\Big)\, ,\label{chi:angularlift}\\
F_\e(u,T) &=3\e+2\e\Big(\cosof{\theta(\e j)-\theta(\e i)}+\cosof{\theta(\e k)-\theta(\e j)}+\cosof{\theta(\e i)-\theta(\e k)}\Big)\, .\label{F:angularlift}
\end{align}
The next lemma is useful to relate the chirality and the energy in a triangle. 
\begin{lemma}\label{lem:chirality}
Let $f,g:[0,2\pi)\x[0,2\pi)\to\R$ be given by
\begin{align*}
f(\theta_1,\theta_2) &\defas\sin(\theta_1)+\sin(\theta_2-\theta_1)-\sin(\theta_2)\, ,\\
g(\theta_1,\theta_2) &\defas\cos(\theta_1)+\cos(\theta_2-\theta_1)+\cos(\theta_2)\, .
\end{align*}
Then $f$ and $g$ have the following properties:
\begin{enumerate}[label={\rm(\roman*)}]
\item $f(\theta_1,\theta_2)\in[-\tfrac{3\sqrt{3}}{2},\tfrac{3\sqrt{3}}{2}]$ for every $\theta_1,\theta_2\in[0,2\pi)$. Moreover, $f(\theta_1,\theta_2)\in\{-\tfrac{3\sqrt{3}}{2},\tfrac{3\sqrt{3}}{2}\}$ if and only if $g(\theta_1,\theta_2)=-\tfrac{3}{2}$.
\item $f(\theta_1,\theta_1)=f(\theta_1,0)=f(0,\theta_2)=0$ for every $\theta_1,\theta_2\in[0,2\pi)$. In addition, for every $\theta_2\in(0,2\pi)$ there holds $f(\, \cdot\, ,\theta_2)>0$ on $(0,\theta_2)$ and $f(\, \cdot\, ,\theta_2)<0$ on $(\theta_2,2\pi)$.
\end{enumerate}
\end{lemma}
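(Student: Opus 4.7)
The plan is to reinterpret $f$ and $g$ geometrically via the three unit vectors $v_0 \defas 1$, $v_1 \defas \exp(\iota\theta_1)$, $v_2 \defas \exp(\iota\theta_2)$ on $\S^1$. A direct expansion yields $f(\theta_1,\theta_2) = v_0 \x v_1 + v_1 \x v_2 + v_2 \x v_0$, so $\tfrac{1}{2}f(\theta_1,\theta_2)$ is the signed area of the triangle with ordered vertices $v_0, v_1, v_2$ (by the shoelace formula); similarly, $|v_0 + v_1 + v_2|^2 = 3 + 2 g(\theta_1,\theta_2)$. This recasts the lemma as two elementary statements about a triangle inscribed in the unit circle.

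For (i) I invoke the classical fact that a triangle inscribed in a circle of radius $1$ has area at most $\tfrac{3\sqrt{3}}{4}$, attained iff it is equilateral; doubling gives $|f(\theta_1,\theta_2)| \leq \tfrac{3\sqrt{3}}{2}$. For the equality characterization, since the triangle is inscribed in $\S^1$ (whose circumcenter is the origin), it is equilateral iff its centroid equals the origin, iff $v_0 + v_1 + v_2 = 0$, iff $|v_0 + v_1 + v_2|^2 = 0$, iff $g(\theta_1,\theta_2) = -\tfrac{3}{2}$.

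For (ii), the three vanishing identities follow by substitution, each degenerating the triangle to a collinear triple. For the strict sign claim, fix $\theta_2 \in (0,2\pi)$: if $\theta_1 \in (0, \theta_2)$, the vertices $v_0, v_1, v_2$ appear in this order going counterclockwise along $\S^1$, so the triangle is positively oriented and $f > 0$; if $\theta_1 \in (\theta_2, 2\pi)$, the counterclockwise order becomes $v_0, v_2, v_1$, so the triangle is negatively oriented and $f < 0$.

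No real obstacle arises: the argument is purely elementary, the only non-trivial input being the extremal property of the inscribed equilateral triangle (proved, e.g., by fixing two vertices and moving the third to the midpoint of the opposite arc, then iterating). As a purely computational alternative, one may locate the critical points of $f$ on $[0,2\pi)^2$ by solving $\cos\theta_1 = \cos(\theta_2 - \theta_1) = \cos\theta_2$, which on $[0,2\pi)^2$ leaves only $(0,0)$, $(2\pi/3, 4\pi/3)$ and $(4\pi/3, 2\pi/3)$, and then evaluate $f$ and $g$ at each to read off both statements simultaneously.
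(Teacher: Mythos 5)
Your proof is correct, and it takes a genuinely different route from the paper's. The paper argues by direct multivariable calculus: it computes $\nabla f$ and $\nabla g$ explicitly, solves $\nabla f=0$ to find the interior critical points $(\tfrac{2\pi}{3},\tfrac{4\pi}{3})$ and $(\tfrac{4\pi}{3},\tfrac{2\pi}{3})$ (via the condition $\theta_1=\tfrac{\theta_2}{2}+z_1\pi$, $\theta_2=\tfrac{\theta_1}{2}+z_2\pi$), checks $f=0$ on $\{\theta_1=0\}\cup\{\theta_2=0\}$, evaluates, and then for (ii) carries out a one-variable analysis of $f(\cdot,\theta_2)$ with boundary-derivative checks. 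Your primary argument instead identifies $\tfrac12 f$ with the signed area of the triangle inscribed in $\S^1$ with ordered vertices $1,\exp(\iota\theta_1),\exp(\iota\theta_2)$ and $3+2g$ with $|v_0+v_1+v_2|^2$; then (i) reduces to the classical extremal property of inscribed triangles (maximal area iff equilateral iff $v_0+v_1+v_2=0$ iff $g=-\tfrac32$), and (ii) reduces to the fact that three distinct points on a circle are never collinear, so the sign of $f$ is exactly the orientation of the triple, which flips when $\theta_1$ crosses $\theta_2$. Both routes are sound. What the geometric route buys is conceptual clarity — it makes visible why chirality (a signed area, i.e., an orientation) and energy (a resultant magnitude) vanish together, which is the heart of the $\Z_2$-symmetry discussion in the introduction. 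What the paper's calculus route buys is that its byproducts are reused downstream: the critical-angle formulas $\theta_2^{\rm pos}=\tfrac{\theta_2}{2}$, $\theta_2^{\rm neg}=\tfrac{\theta_2}{2}+\pi$ from \eqref{def:thetapn} and the condition \eqref{cond:criticalpoints} feed directly into the proof of Lemma~\ref{lemma:energy of opposite chirality}, so the geometric argument, while cleaner for this lemma in isolation, would not replace those intermediate computations elsewhere. Two small remarks for completeness of your write-up: the "iterate towards the arc midpoint" sketch should be replaced (or supplemented) by the compactness argument — a maximizer exists by continuity on the torus, and at a maximizer each vertex must sit at the midpoint of the opposite arc, forcing equilateral — since the iteration alone requires a convergence argument; and your computational alternative at the end is essentially the paper's proof, so the list of solutions to $\cos\theta_1=\cos(\theta_2-\theta_1)=\cos\theta_2$ on $[0,2\pi)^2$ that you give, namely $(0,0)$, $(\tfrac{2\pi}{3},\tfrac{4\pi}{3})$, $(\tfrac{4\pi}{3},\tfrac{2\pi}{3})$, matches.
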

\begin{proof}
Since there obviously holds $f(\theta_1,\theta_1)=f(\theta_1,0)=f(0,\theta_2)=0$, we only need to prove (i) and the second part of (ii). 
To prove (i) we show that $\min f=-\tfrac{3\sqrt{3}}{2}$ and $\max f=\tfrac{3\sqrt{3}}{2}$ and we relate minimizers and maximizers of $f$ to minimizers of $g$. 
To this end we start computing 
\begin{equation*}
\nabla f(\theta_1,\theta_2)=
\begin{pmatrix}
\cos(\theta_1)-\cos(\theta_2-\theta_1)\\
\cos(\theta_2-\theta_1)-\cos(\theta_2)
\end{pmatrix}\, 
\quad\text{and}\quad
\nabla g(\theta_1,\theta_2)=
\begin{pmatrix}
-\sin(\theta_1)+\sin(\theta_2-\theta_1)\\
-\sin(\theta_2-\theta_1)-\sin(\theta_2)
\end{pmatrix}\, .
\end{equation*}
A direct calculation shows that $\nabla f(\theta_1,\theta_2)=0$ for some $(\theta_1,\theta_2)\in(0,2\pi)\x(0,2\pi)$ if and only if  
\begin{equation}\label{cond:criticalpoints}
\theta_1=\frac{\theta_2}{2}+z_1\pi\quad\text{and}\quad \theta_2=\frac{\theta_1}{2}+z_2\pi\, ,\quad \text{for some } z_1,z_2\in\{0,1\} \, .
\end{equation}
For $(\theta_1,\theta_2)\in(0,2\pi)\x(0,2\pi)$  this can only be satisfied if
\begin{equation} \label{eq:wells}
(\theta_1,\theta_2)=(\tfrac{2\pi}{3},\tfrac{4\pi}{3})\quad\text{or}\quad(\theta_1,\theta_2)=(\tfrac{4\pi}{3},\tfrac{2\pi}{3})\, .
\end{equation}
Then, since $f=0$ on the boundary of $[0,2\pi)\x[0,2\pi)$, we deduce that
\begin{equation*}
\min_{[0,2\pi)\x [0,2\pi)}f=f\Big(\Big(\frac{4\pi}{3},\frac{2\pi}{3}\Big)\Big)=-\frac{3\sqrt{3}}{2}\quad\text{and}\quad\max_{[0,2\pi)\x[0,2\pi)}f=f\Big(\Big(\frac{2\pi}{3},\frac{4\pi}{3}\Big)\Big)=\frac{3\sqrt{3}}{2}\, .
\end{equation*}
Moreover, $g((\tfrac{2\pi}{3},\tfrac{4\pi}{3}))=g((\tfrac{4\pi}{3},\tfrac{2\pi}{3}))=-\tfrac{3}{2}$, which shows one direction of the second part of (i). To prove the opposite direction, let us assume that $(\overline{\theta}_1,\overline{\theta}_2)\in(0,2\pi)\x(0,2\pi)$ is such that $g((\overline{\theta}_1,\overline{\theta}_2)=\min g$. Then necessarily $\nabla g(\overline{\theta}_1,\overline{\theta}_2)=0$, from which we deduce that $(\overline{\theta}_1,\overline{\theta}_2)$ must satisfy~\eqref{cond:criticalpoints} (the possibility that $\overline{\theta}_1 = \pi$ or $\overline{\theta}_2 = \pi$ are ruled out by the fact that $g(\, \cdot\, ,\pi)=g(\pi,\, \cdot\, )=-1$). The pairs $(\overline{\theta}_1,\overline{\theta}_2)$ satisfying \eqref{cond:criticalpoints} are either $(\overline{\theta}_1,\overline{\theta}_2)=(\tfrac{2\pi}{3},\tfrac{4\pi}{3})$ or $(\overline{\theta}_1,\overline{\theta}_2)=(\tfrac{4\pi}{3},\tfrac{2\pi}{3})$ and in both cases it holds $g(\overline{\theta}_1,\overline{\theta}_2)=-\tfrac{3}{2}$. This yields that $\min g=-\tfrac{3}{2}$ and that the opposite direction of (i) holds, upon noticing that $g\geq-1$ on the boundary of $[0,2\pi)\x[0,2\pi)$. 
To complete the proof of (ii) let us fix $\theta_2\in(0,2\pi)$ and consider $f(\, \cdot\, ,\theta_2)$ as a function of $\theta_1$. Then \eqref{cond:criticalpoints} shows that $\tfrac{\partial f}{\partial \theta_1}(\theta_1,\theta_2)=0$ if and only if $\theta_1\in\{\theta_2^{\rm pos},\theta_2^{\rm neg}\}$, where
\begin{equation}\label{def:thetapn}
\theta_2^{\rm pos}\defas\frac{\theta_2}{2}\in(0,\theta_2)\, ,\quad \theta_2^{\rm neg}\defas\frac{\theta_2}{2}+\pi\in(\theta_2,2\pi)\, .
\end{equation}
Moreover, upon extending $f(\, \cdot\, ,\theta_2)$ to an open interval containing $(0,2\pi)$, we get 
\begin{equation*}
\frac{\partial f}{\partial \theta_1 }(0,\theta_2)=\frac{\partial f}{\partial \theta_1 }(2\pi,\theta_2)=1-\cos(\theta_2)>0\quad\text{and}\quad\frac{\partial f}{\partial \theta_1 }(\theta_2,\theta_2)=\cos(\theta_2)-1<0\, .
\end{equation*}
In particular, from the intermediate value theorem we deduce that $f(\, \cdot\, ,\theta_2)$ is strictly increasing on $(0,\theta_2^{\rm pos})$ and strictly decreasing on $(\theta_2^{\rm pos},\theta_2)$. Since in addition $f(0,\theta_2)=f(\theta_2,\theta_2)=0$ this implies that $f(\, \cdot\, ,\theta_2)>0$ on $(0,\theta_2)$. Arguing similarly on the intervals $(\theta_2,\theta_2^{\rm neg})$ and $(\theta_2^{\rm neg},2\pi)$ we obtain $f(\, \cdot\, ,\theta_2)<0$ on $(\theta_2,2\pi)$, which proves (ii).
\end{proof}

\begin{remark}\label{rem:properties:chirality}
    Using the expressions of $\chi(u,T)$ and $F_\e(u,T)$ in \eqref{chi:angularlift}--\eqref{F:angularlift} one can show that $\chi(u,T)\in[-1,1]$ and $\chi(u,T)\in\{-1,1\}$ if and only if $F_\e(u,T)=0$, \ie configurations that maximize or minimize $\chi(\, \cdot\, ,T)$ are at the same time minimizers for $F_\e(\, \cdot\, ,T)$. This follows from Lemma~\ref{lem:chirality}~(i) upon noticing that in~\eqref{chi:angularlift}--\eqref{F:angularlift} it is not restrictive to assume that $\theta(\e i)=0$, since both $\chi_\e$ and $F_\e$ are invariant under rotations in $u$. We observe that also a quantitative version of this property holds. Namely, a continuity argument shows that for every $\delta>0$ there exists $C_\delta>0$ such that for every $u\in\SF_\e$ and every $T\in\T_\e(\R^2)$ the following implication holds: 
    \begin{equation}\label{impl:delta}
    \chi(u,T)\in (-1+\delta,1-\delta)\quad\implies\quad F_\e(u,T)\geq C_\delta\e\, .
    \end{equation}
    \end{remark}
    \begin{remark}\label{rem:orientation}
        As a consequence of Lemma~\ref{lem:chirality}~(ii) one obtains the following characterization of the sign of the chirality. Let $\theta(\e j) \in [0,2\pi)$ be the angle between $u(\e i)$ and $u(\e j)$ and let $\theta(\e k) \in [0,2\pi)$ the angle between $u(\e i)$ and $u(\e k)$. Then $\chi(u,T)>0$ if and only if $\theta(\e j)<\theta(\e k)$ and and $\chi(u,T)< 0$ if and only if $\theta(\e j)>\theta(\e k)$. In other words, a positive chirality on $T=\conv\{\e i,\e j,\e k\}$ corresponds to a counterclockwise ordering of $(u(\e i), u(\e j), u(\e k))$ on $\S^1$, while a negative chirality corresponds to a clockwise ordering on $\S^1$.  
    \end{remark}

\begin{figure}[H]
    \includegraphics{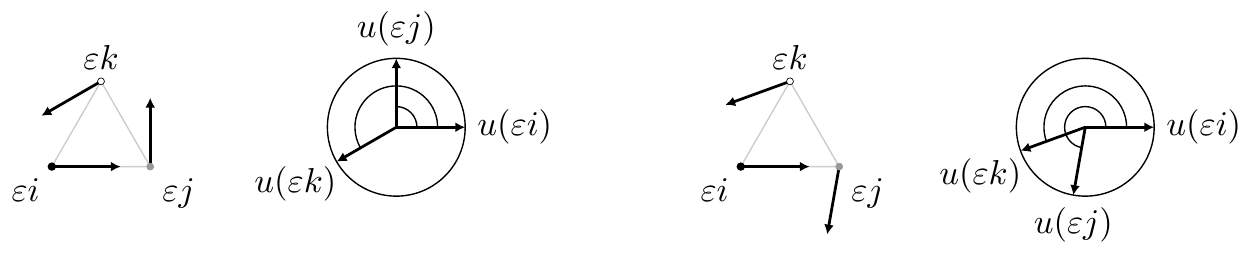}
\caption{On the left: a configuration of vectors with positive chirality which shows the criterion explained in Remark~\ref{rem:orientation}. On the right: negative chirality.}
\end{figure}
    
    
%
\subsection{Statement of the main result}   
Notice that $\chi(u)\in L^1(\Omega)$. We then extend $F_\e$ to $L^1(\Omega)$ by setting
\begin{align}\label{eq:extended energy}
F_\varepsilon(\chi) = \inf\{F_\varepsilon(u)\colon u\in\SF_\e \, ,\ \chi(u)=\chi\}\, ,
\end{align}
with the convention $\inf\emptyset=+\infty$.
\begin{remark}\label{rem:def:extenergy}
If $\chi\in L^1(\Omega)$ is such that $\chi=\chi(u)$ for some $u\in\SF_\e$, then the infimum in \eqref{eq:extended energy} is actually a minimum.
\end{remark}
To state the main theorem we need to introduce two ground states, that we name $u_\e^{\rm pos},u_\e^{\rm neg}\in\SF_\e$ which have a uniform chirality equal to $+1$ and $-1$, respectively. They are given by
\begin{equation*} 
u_\e^{\rm pos}(x)\defas
\begin{cases}
\exp(\iota 0) &\text{if}\ x\in\L_\e^1\, ,\\
\exp(\iota 2\pi/3) &\text{if}\ x\in\L_\e^2\, ,\\
\exp(\iota 4\pi/3) &\text{if}\ x\in\L_\e^3\, ,
\end{cases}
\qquad
u_\e^{\rm neg}(x)\defas
\begin{cases}
\exp(\iota 0) &\text{if}\ x\in\L_\e^1\, ,\\
\exp(\iota 4\pi/3) &\text{if}\ x\in\L_\e^2\, ,\\
\exp(\iota 2\pi/3) &\text{if}\ x\in\L_\e^3\, ,
\end{cases}
\end{equation*}
for every $x\in\L_\e$. We also set $u^\pos\defas u_1^\pos$, $u^\neg\defas u_1^\neg$. The ground states $u^\pos$ and $u^\neg$ will be used as boundary conditions on the discrete boundary of the square $Q^\nu$ given by 
\begin{align}\label{def:partialQeps}
    \partial_\e^\pm Q^\nu =\{ x \in \mathcal{L}_\varepsilon\colon \pm\langle\nu, x\rangle \geq  3\varepsilon\, ,\ \mathrm{dist}(x,\partial Q^\nu) \leq 3 \varepsilon\}\, .
    \end{align}
\begin{theorem}\label{maintheorem}
The energies $F_\e \colon L^1(\Omega)\to[0,+\infty]$ defined by \eqref{eq:extended energy} $\Gamma$-converge in the strong $L^1(\Omega)$-topology to the functional $F \colon L^1(\Omega)\to[0,+\infty]$ given by
\begin{equation}\label{def: limit energy}
F(\chi) := 
\begin{cases} 
\displaystyle\int_{\js{\chi}} \varphi(\nu_\chi) \d\mathcal{H}^1 &\text{if}\ \chi \in BV(\Omega;\{-1,1\})\, ,\\
+\infty &\text{otherwise in}\ L^1(\Omega)\, ,
\end{cases}
\end{equation}
where $\varphi\colon \S^1\to[0,+\infty)$ is defined by
\begin{equation}
\label{def: varphi}
\varphi(\nu)\defas\lim_{\varepsilon \to 0} \min\left\{ F_\varepsilon(u,Q^\nu) \colon u = u_\e^{\rm pos}\text{ on } \partial_\varepsilon^+ Q^\nu ,\ u=u_\e^{\rm neg}\ \text{on}\ \partial_\e^- Q^\nu\right\}\, .
\end{equation}
\end{theorem}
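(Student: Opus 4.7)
The proof follows the standard three-step $\Gamma$-convergence scheme: compactness, liminf inequality, and construction of a recovery sequence. Compactness is supplied by Proposition \ref{prop:compactness}, which also identifies $BV(\Omega;\{-1,1\})$ as the natural domain of the limit (the quantitative bound \eqref{impl:delta} rules out intermediate values of the chirality on energy-bounded sequences).

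For the liminf inequality I would employ a Fonseca--M\"uller blow-up. Given $\chi(u_\e) \to \chi$ in $L^1(\Omega)$ with $\sup_\e F_\e(u_\e) < \infty$, one introduces the nonnegative measures $\mu_\e := \sum_{T \in \T_\e(\Omega)} F_\e(u_\e,T)\, \delta_{x_T}$, extracts a weak-$*$ subsequential limit $\mu$, and applies the Besicovitch derivation theorem to obtain, at $\H^1$-a.e.\ $x_0 \in \js{\chi}$,
\[
\frac{d\mu}{d\H^1 \mres \js{\chi}}(x_0) \;\ge\; \psi(\nu_\chi(x_0))\, ,
\]
where $\psi$ is the density defined in \eqref{intro:psi}. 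This yields $\liminf_\e F_\e(u_\e) \ge \int_{\js{\chi}} \psi(\nu_\chi)\, \d\H^1$, the content of Proposition \ref{prop:lowerbound}. The core remaining task is then $\psi(\nu) = \varphi(\nu)$ (Proposition \ref{prop:psi is phi}). The inequality $\psi \le \varphi$ is immediate, since every competitor for the minimum in \eqref{def: varphi} is a competitor for the infimum in \eqref{intro:psi}. For the reverse, starting from a quasi-minimizer $(u_\e)$ for $\psi(\nu)$, one has to perturb the spin field close to $\partial Q^\nu$ to match $u_\e^\pos$ on $\partial_\e^+ Q^\nu$ and $u_\e^\neg$ on $\partial_\e^- Q^\nu$ without increasing the energy asymptotically. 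Since the chirality depends nonlinearly on the spin, the usual cut-off on the order parameter is unavailable; the perturbation has to be built directly on $u_\e$. The recipe: first apply Lemma \ref{lem:independence} to localize the energy in a thin strip around $L^\nu$; then use the De Giorgi averaging of Lemma \ref{lemma:choosing strip} to select a frame close to $\partial Q^\nu$ with arbitrarily small total energy; finally, on one-dimensional slices of this frame, invoke Lemma \ref{lemma:1d interpolation} to rotate $u_\e$ to the prescribed ground state at a cost proportional to the slice energy.

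The main obstacle I anticipate is a topological consistency issue on these slices. The interpolation of Lemma \ref{lemma:1d interpolation} can be performed only when the angular lift of $u_\e$ along the frame ends at a value compatible with $u_\e^\pos$ (respectively $u_\e^\neg$) modulo $2\pi$, so the winding number of $u_\e$ along the frame must be controlled. The smallness of the frame energy is sharper than what a plain slicing argument provides, which is precisely why Lemma \ref{lem:independence} has to precede Lemma \ref{lemma:choosing strip}: it is this combination that forces the winding number to vanish and enables the interpolation. This is Step 3 of the proof of Proposition \ref{prop:psi is phi}.

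For the upper bound, I would first check that $\varphi$ is the restriction to $\S^1$ of a convex, positively $1$-homogeneous function, a subadditivity property obtainable by direct comparison in the cell formula; this is sufficient to ensure continuity of $F$ along polyhedral approximation in $BV(\Omega;\{-1,1\})$. For $\chi$ with polyhedral jump set, a recovery sequence is assembled by gluing, in a tubular neighborhood of each face of $\js{\chi}$ with normal $\nu$, a near-minimizer of $\varphi(\nu)$ rescaled to lattice size, while elsewhere setting $u_\e$ equal to $u_\e^\pos$ or $u_\e^\neg$ according to the phase. The boundary conditions in \eqref{def: varphi} guarantee compatibility along each face, and the finite vertex set of the polyhedral interface contributes asymptotically negligible energy; a standard diagonal argument concludes.
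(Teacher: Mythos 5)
Your proposal tracks the paper's proof essentially step by step: blow-up to the density $\psi$ of~\eqref{intro:psi}, the reduction to $\psi(\nu)\geq\varphi(\nu)$, the three-layer localization (Lemma~\ref{lem:independence}, then Lemma~\ref{lemma:choosing strip}, then the one-dimensional interpolation Lemma~\ref{lemma:1d interpolation}), and the polyhedral/gluing construction for the recovery sequence. One small imprecision: the improved frame estimate $F_\e(u_\e,S_\e)\leq\e\sigma_\e$ (with $\sigma_\e\to 0$) does not force the winding number along the frame to \emph{vanish}; the lifted angles in Step~3 may still oscillate by as much as $C\sqrt{\sigma_\e/\e}$, which can diverge. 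What the sharper bound actually buys is that the interpolation's cost $\sim\e\, m_\e^2/N_\e$ per slice, summed over the $\sim 1/\e$ slices with $N_\e\sim \delta/\e$ and $m_\e\sim 1+\sqrt{\sigma_\e/\e}$, gives $O(\sigma_\e+\e)$ — the winding is tolerated, not eliminated, and a plain $O(\e)$ frame estimate would leave an $O(1)$ error here (cf.\ the footnote to~\eqref{est:on top part}).
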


The proof of Theorem~\ref{maintheorem} will be carried out in Sections~\ref{section:lowerbound} and~\ref{section:upperbound}, where we prove separately the asymptotic lower bound (Proposition~\ref{prop:lowerbound}) and the asymptotic upper bound (Proposition~\ref{prop:upperbound}), respectively. 

\begin{remark} \label{rem: well-defined} By standard arguments in the analysis of asymptotic cell formulas (see e.g. \cite[Proposition 4.6]{AliCicSig}) one can show that the limit in \eqref{def: varphi} actually exists, so that $\varphi$ is well defined. Note that, by the symmetries of the interaction energies, there holds $\varphi(-\nu)=\varphi(\nu)$. Moreover, one can show (cf.~\cite[Proposition 4.7]{AliCicSig}) that the one-homogeneous extension of $\varphi$ is convex, hence continuous. 
\end{remark}

\EEE
\begin{remark} By a scaling argument we note that for all $\rho >0$ there holds
\begin{align}\label{eq: varphi rho}
\varphi(\nu)  = \lim_{\varepsilon \to 0}\frac{1}{\rho} \min\big\{ F_\varepsilon(u,Q^\nu_\rho) \colon u = u^{\rm pos}_\varepsilon \text{ on } \partial_\varepsilon^+ Q^\nu_\rho\, ,\ u=u_\e^{\rm neg}\ \text{on}\ \partial_\e^- Q_\rho^\nu\big\}\, ,
\end{align}
where $\partial_\e^\pm Q_\rho^\nu$ are defined according to \eqref{def:partialQeps} with $Q_\rho^\nu$ in place of $Q^\nu$.
\end{remark}
\section{Compactness}
\begin{proposition}\label{prop:compactness}
Let $(u_\e)$ be a sequence of spin fields $u_\e\in\SF_\e$ satisfying
\begin{equation}\label{comp:uniformbound}
\sup_\e F_\e(u_\e)<+\infty\, .
\end{equation}
Then there exists $\chi\in BV(\Omega;\{-1,1\})$ such that up to subsequences $\chi(u_\e)\to \chi$ in $L^1(\Omega)$.
\end{proposition}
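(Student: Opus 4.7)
The strategy is to approximate $\chi(u_\e)$ in $L^1(\Omega)$ by a $\{-1,1\}$-valued function $v_\e$ with equibounded total variation and then apply the standard $BV$ compactness theorem. The main input is the quantitative implication \eqref{impl:delta}: for every $\delta\in(0,1)$ and every triangle $T$ with $|\chi(u_\e,T)|<1-\delta$ one has $F_\e(u_\e,T)\geq C_\delta\e$. Combined with \eqref{comp:uniformbound}, this gives at most $O(1/\e)$ such triangles, so $|\{|\chi(u_\e)|<1-\delta\}|=O(\e/C_\delta)$. Hence $\int_\Omega(1-|\chi(u_\e)|)\d x\leq\delta|\Omega|+O(\e/C_\delta)$ for every $\delta>0$, which shows that $|\chi(u_\e)|\to 1$ in $L^1(\Omega)$.

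Set $\mathcal{B}_\e:=\{T\in\T_\e(\Omega)\colon |\chi(u_\e,T)|<1/2\}$ (bad triangles) and define $v_\e$ on $\Omega$ by $v_\e:=\sign(\chi(u_\e,T))$ on every good $T\in\T_\e(\Omega)\setminus\mathcal{B}_\e$, and $v_\e:=1$ elsewhere. Since $|v_\e-\chi(u_\e)|=1-|\chi(u_\e)|$ on the good region and $|v_\e-\chi(u_\e)|\leq 2$ on the bad region (whose area is $O(\e)$), the previous paragraph yields $\|v_\e-\chi(u_\e)\|_{L^1(\Omega)}\to 0$. It therefore suffices to prove the uniform bound $|Dv_\e|(\Omega)\leq C$. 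The jump set of $v_\e$ lies in the union of the common edges of pairs of adjacent triangles $T,T'\in\T_\e(\Omega)$ with $v_\e(T)\neq v_\e(T')$. Edges incident to a bad triangle contribute total length at most $3\e\cdot\#\mathcal{B}_\e=O(1)$.

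The decisive step concerns the edges separating two good triangles of opposite sign. If $T=\conv\{\e i,\e j,\e k\}$ and $T'=\conv\{\e l,\e j,\e k\}$ are adjacent with $i,l\in\L^1$, $j\in\L^2$, $k\in\L^3$, then the formula \eqref{def: chirality} telescopes to
\[
\chi(u_\e,T)-\chi(u_\e,T')=\frac{2}{3\sqrt{3}}\bigl(u_\e(\e i)-u_\e(\e l)\bigr)\times\bigl(u_\e(\e j)-u_\e(\e k)\bigr),
\]
hence $|\chi(u_\e,T)-\chi(u_\e,T')|\leq\frac{4}{3\sqrt{3}}|u_\e(\e i)-u_\e(\e l)|$. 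Writing $u_\e(\e i)-u_\e(\e l)=(u_\e(\e i)+u_\e(\e j)+u_\e(\e k))-(u_\e(\e l)+u_\e(\e j)+u_\e(\e k))$ and using the definition of $F_\e$, one gets
\[
|\chi(u_\e,T)-\chi(u_\e,T')|\leq C\sqrt{\bigl(F_\e(u_\e,T)+F_\e(u_\e,T')\bigr)/\e}\,;
\]
the analogous identity for the other two orientations of the shared edge gives the same bound. On every jump edge of this type the left-hand side is at least $1$, forcing $F_\e(u_\e,T)+F_\e(u_\e,T')\geq c\e$ for a universal $c>0$. Since each triangle belongs to at most three such edges, summing and using \eqref{comp:uniformbound} produces $O(1/\e)$ such edges and hence total $\H^1$-length $O(1)$. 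Together with the bad-triangle contribution this gives $|Dv_\e|(\Omega)\leq C$. Standard $BV$ compactness then extracts a subsequence with $v_\e\to\chi$ in $L^1(\Omega)$ for some $\chi\in BV(\Omega;\{-1,1\})$, and by the second paragraph also $\chi(u_\e)\to\chi$ in $L^1(\Omega)$. The main technical hurdle is the telescoping identity, which exploits the fact that two adjacent triangles share two vertices and thus two spin values, collapsing the chirality difference into a single cross product of the two non-shared spins controllable by the square root of the local energy.
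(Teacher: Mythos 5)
Your proof is correct, but it takes a genuinely different route from the paper's. The paper's argument hinges on Lemma~\ref{lemma:energy of opposite chirality}, a sharp trigonometric optimization showing that \emph{any} pair of adjacent triangles with opposite-sign chirality (one $\geq 0$, one $\leq 0$) carries energy at least $\tfrac{5}{3}\e$; thresholding $\chi(u_\e)$ at $0$ then gives the perimeter bound directly. You instead threshold at $1/2$ and replace Lemma~\ref{lemma:energy of opposite chirality} with the elementary telescoping identity
\[
\chi(u_\e,T)-\chi(u_\e,T')=\tfrac{2}{3\sqrt 3}\,(u_\e(\e i)-u_\e(\e l))\times(u_\e(\e j)-u_\e(\e k))\,,
\]
which converts a chirality jump $\geq 1$ across an edge between two "good'' triangles into an energy contribution $\geq c\,\e$ via the bound $|u_\e(\e i)-u_\e(\e l)|\leq \sqrt{F_\e(u_\e,T)/\e}+\sqrt{F_\e(u_\e,T')/\e}$. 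The price is that triangles with $|\chi(u_\e,T)|<1/2$ must be counted separately, which you do via \eqref{impl:delta}, the same remark you also use to show $|\chi(u_\e)|\to 1$ in $L^1$. Both approaches are sound; yours is arguably more elementary (no auxiliary lemma requiring trigonometric case analysis), while the paper's Lemma~\ref{lemma:energy of opposite chirality} gives a sharper constant and, more importantly, is reused later in the lower-bound argument (Step~4 of Proposition~\ref{prop:psi is phi}), so it is not wasted effort. Two small presentational remarks: the perimeter and measure estimates should strictly be stated on $\Omega'\subset\subset\Omega$ to avoid triangles that are not entirely contained in $\Omega$ near $\partial\Omega$ (exactly as the paper does, via \cite[Theorem 3.39 and Remark 3.37]{AFP}), a routine fix for a Lipschitz domain; and the counting step "$K\,c\,\e\leq 3 F_\e(u_\e)$'' deserves a line of justification, since each triangle is incident to at most three jump edges.
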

To prove Proposition \ref{prop:compactness} we first estimate from below the energy of a spin field $u$ on two neighboring triangles where $\chi(u)$ changes sign. Given a triangle $T\in\T_{\e}(\R^{2})$ we introduce the class $\NN_\e(T)$ of its neighboring triangles, namely those triangles in $\T_{\e}(\R^{2})$ that share a side with $T$. More precisely, we define
\begin{equation}\label{def:neighbors}
\NN_\e(T)\defas\{T'\in\T_\e(\R^2)\colon \H^1(T\cap T')=\e\}\, .
\end{equation}
\begin{lemma}\label{lemma:energy of opposite chirality}
Let $u\in\SF_\e$ and suppose that $T^{\rm pos}, T^{\rm neg}\in\T_\e(\R^2)$ with $T^\neg\in\NN_\e(T^\pos)$ are such that $\chi(u,T^{\rm pos})\geq 0$ and $\chi(u,T^{\rm neg})\leq 0$. Then $F_\e(u,T^{\rm pos}\cup T^{\rm neg})\geq\tfrac{5}{3}\e$. 
\end{lemma}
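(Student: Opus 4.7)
My strategy is to reduce the claim to an explicit two-variable constrained minimization via angular lifts, exploiting the characterization of the chirality sign provided by Remark~\ref{rem:orientation}.

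Label by $\e a, \e b$ the two vertices shared by $T^\pos$ and $T^\neg$, and by $\e c \in T^\pos$, $\e d \in T^\neg$ the non-shared ones. By the $\S^1$-invariance of both $\chi$ and $F_\e$, I may assume $u(\e a) = \exp(\iota 0)$ and write $u(\e b) = \exp(\iota \beta)$, $u(\e c) = \exp(\iota \gamma)$, $u(\e d) = \exp(\iota \delta)$ with $\beta, \gamma, \delta \in [0, 2\pi)$. Since $g$ is symmetric in its two arguments, \eqref{F:angularlift} gives
\begin{equation*}
F_\e(u, T^\pos \cup T^\neg) = \e\bigl( 6 + 2g(\beta, \gamma) + 2g(\beta, \delta) \bigr),
\end{equation*}
while \eqref{chi:angularlift} and Lemma~\ref{lem:chirality}~(ii) translate the chirality conditions into $\gamma \in [\beta, 2\pi)$ and $\delta \in [0, \beta]$. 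The three possible cases for which edge is shared (i.e., to which pair of sublattices the vertices $\e a, \e b$ belong) all reduce to exactly this problem, thanks to the antisymmetry $f(x,y) = -f(y,x)$ and the symmetry of $g$.

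For fixed $\beta \in (0, 2\pi)$ the minimization decouples into two one-dimensional problems. A direct computation of $\partial_\gamma g(\beta, \gamma) = -\sin(\gamma - \beta) - \sin(\gamma)$ identifies the unique interior critical point $\gamma = \beta/2 + \pi$ of $g(\beta, \cdot)$ on $(\beta, 2\pi)$, and analogously $\delta = \beta/2$ on $(0, \beta)$. Writing $t \defas \cos(\beta/2)$, one computes
\begin{equation*}
g(\beta, \beta/2 + \pi) = 2t^2 - 2t - 1,\qquad g(\beta, \beta/2) = 2t^2 + 2t - 1,\qquad g(\beta, 0) = g(\beta, \beta) = 4t^2 - 1.
\end{equation*}
A second-derivative check shows that for $\beta \in (0, \pi)$ (so $t > 0$) the minimum of $g(\beta, \gamma)$ on $[\beta, 2\pi)$ is attained at the interior critical point while that of $g(\beta, \delta)$ on $[0, \beta]$ is attained at the boundary, with sum $6t^2 - 2t - 2$; for $\beta \in (\pi, 2\pi)$ (so $t < 0$) the roles swap and the sum becomes $6t^2 + 2t - 2$. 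Setting $s \defas |t| \in [0, 1]$ unifies both regimes:
\begin{equation*}
F_\e(u, T^\pos \cup T^\neg) \geq \e\bigl( 12 s^2 - 4 s + 2 \bigr),
\end{equation*}
and the right-hand side, as a quadratic in $s$, attains its minimum $\tfrac{5}{3}\e$ at $s = \tfrac{1}{6}$. The degenerate boundary cases $\beta \in \{0, \pi\}$ (where either the chirality constraints become vacuous or $g(\pi, \cdot) \equiv -1$) give total energy at least $2\e > \tfrac{5}{3}\e$, so the inequality holds in all cases.

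\textbf{Main obstacle.} The argument is essentially a two-variable constrained calculus exercise; the only genuine care required is the case split on the sign of $t = \cos(\beta/2)$, which decides whether each constrained minimum of $g(\beta, \cdot)$ is attained at the interior critical point or at the boundary of the admissible interval. A more symmetric formulation avoiding this dichotomy would be nicer but does not seem immediate: crude inequalities such as $|S_1|^2 + |S_2|^2 \geq \tfrac{1}{2}|S_1 - S_2|^2$ applied to the spin sums $S_1, S_2$ on the two triangles yield only $\tfrac{1}{2}\e|u(\e c) - u(\e d)|^2$, which is insufficient to recover the sharp constant $\tfrac{5}{3}$.
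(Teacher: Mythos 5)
Your proof is correct and takes essentially the same approach as the paper: reduce to a constrained two-variable minimization of $6+2g(\beta,\gamma)+2g(\beta,\delta)$ under $\delta\le\beta\le\gamma$, decouple into two one-dimensional problems in $\gamma$ and $\delta$ at the interior critical points/boundary, and minimize the resulting quadratic in $\cos(\beta/2)$. The only differences are presentational: you explicitly justify (via the antisymmetry of $f$ and symmetry of $g$) that all three shared-edge configurations reduce to the same problem, whereas the paper asserts this WLOG; and you unify the two sign regimes with $s=|\cos(\beta/2)|$ before minimizing, whereas the paper treats $\theta_2\in(\pi,2\pi)$ in detail and dispatches $\theta_2\in(0,\pi)$ by symmetry — the underlying quadratic and the resulting constant $\tfrac{5}{3}$ are identical.
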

\begin{proof}
It is not restrictive to assume that $T^{\rm pos}=\conv\{\e i,\e j,\e k\}$ and $T^{\rm neg}=\conv\{\e i,\e j',\e k\}$ with $i\in\L^1$, $j,j'\in\L^2$ and $k\in\L^3$. Moreover, we can assume that $u(\e i)=\tb_1$, that is, $\theta(\e i)=0$ according to the notation in \eqref{chi:angularlift}--\eqref{F:angularlift}. Then, using the function $g\colon [0,2\pi)\times[0,2\pi)\to\R$ defined in Lemma \ref{lem:chirality}, we can rewrite $F_\e(u,T^{\rm pos}\cup T^{\rm neg})$ as
\begin{equation*}
F_\e(u,T^{\rm pos}\cup T^{\rm neg})=6\e+2\e\Big(g\big(\theta(\e j),\theta(\e k)\big)+g\big(\theta(\e j'),\theta(\e k)\big)\Big)\, .
\end{equation*}
Moreover, thanks to Lemma \ref{lem:chirality} (ii) the chirality constraint reads $0\leq\theta(\e j)\leq\theta(\e k)\leq\theta(\e j')$. Thus, the statement is proved if we show that for all $\theta_1$, $\theta_2$, $\theta_3\in[0,2\pi)$ with $0\leq\theta_1\leq\theta_2\leq\theta_3$ there holds
\begin{equation}\label{est:g}
6+2\big(g(\theta_1,\theta_2)+g(\theta_3,\theta_2)\big)\geq\frac{5}{3}\, .
\end{equation}
We first observe that \eqref{est:g} trivially holds if $\theta_2=0$ or $\theta_2=\pi$.
Indeed, if $\theta_2=0$, then also $\theta_1=0$, hence $g(\theta_1,\theta_2)+g(\theta_3,\theta_2)=4+2\cos(\theta_3)\geq 2$, thus \eqref{est:g} is satisfied. If, instead, $\theta_2=\pi$, then a direct computation shows that $g(\theta_1,\theta_2)+g(\theta_3,\theta_2)=  -2$ for every $\theta_1,\theta_3\in[0,2\pi)$, which directly gives \eqref{est:g}.

Suppose now that $\theta_2\in(0,2\pi)\setminus\{\pi\}$ and let us minimize $g(\, \cdot\, ,\theta_2)$ on the two intervals $[0,\theta_2]$ and $[\theta_2,2\pi)$. As in the proof of Lemma \ref{lem:chirality} we obtain that $\frac{\partial g}{\partial_{\theta_1}}(\theta_1,\theta_2)=0$ if and only if $\theta_1\in\{\theta_2^{\rm pos},\theta_2^{\rm neg}\}$ with $\theta_2^{\rm pos}$, $\theta_2^{\rm neg}$ as in \eqref{def:thetapn}. Moreover, we have
\begin{equation}\label{cond:2nd:derivative}
\frac{\partial^2 g}{\partial\theta_1^{2}}(\theta_2^{\rm pos},\theta_2)=-2\cos\Big(\frac{\theta_2}{2}\Big)\quad\text{and}\quad \frac{\partial^2 g}{\partial\theta_1^{2}}(\theta_2^{\rm neg},\theta_2)=2\cos\Big(\frac{\theta_2}{2}\Big)\, .
\end{equation}
Thus, either $\theta_2^{\rm pos}\in(0,\theta_2)$ or $\theta_2^{\rm neg}\in(\theta_2,2\pi)$ is a minimizer for $g(\, \cdot\, ,\theta_2)$, depending on wether $\theta_2\in (0,\pi)$ or $\theta_2\in(\pi,2\pi)$. Suppose first that $\theta_2\in (\pi,2\pi)$. Then \eqref{cond:2nd:derivative} implies that $g(\, \cdot\, ,\theta_2)$ is minimized in $[0,\theta_2)$ by $\theta_2^{\rm pos}$, while in $[\theta_2,2\pi)$ it attains its minimum on the boundary, that is at $\theta_2$. This yields
\begin{equation}\label{est:g:1}
g(\theta_1,\theta_2)+g(\theta_3,\theta_2)\geq g(\theta_2^{\rm pos},\theta_2)+g(\theta_2,\theta_2)=2\cos\Big(\frac{\theta_2}{2}\Big)+3\cos(\theta_2)+1\, ,
\end{equation}
for every $\theta_1\in[0,\theta_2]$ and $\theta_3\in[\theta_2,2\pi)$. Using the equality $\cos(\theta_2)=2\cos^2(\tfrac{\theta_2}{2})-1$, the estimate in \eqref{est:g:1} can be continued via
\begin{equation}\label{est:g:2}
g(\theta_1,\theta_2)+g(\theta_3,\theta_2)\geq 6\cos^2\Big(\frac{\theta_2}{2}\Big)+2\cos\Big(\frac{\theta_2}{2}\Big)-2\, .
\end{equation}
Since the mapping $t\mapsto 6t^2+2t-2$ admits its minimum at $t=-1/6$, from \eqref{est:g:2} we finally deduce that
\begin{equation*}
g(\theta_1,\theta_2)+g(\theta_3,\theta_2)\geq -\frac{13}{6},
\end{equation*}
which is equivalent to \eqref{est:g}. Eventually, the case $\theta_2\in(0,\pi)$ follows similarly by exchanging the roles of $\theta_1$ and $\theta_3$ and replacing $\theta_2^\pos$ by $\theta_2^\neg$.
\end{proof}
Based on Lemma~\ref{lemma:energy of opposite chirality} we now prove Proposition~\ref{prop:compactness}.
\begin{proof}[Proof of Proposition \ref{prop:compactness}] 
We divide the proof in two steps. First, we construct a sequence $(\hat{\chi}_\e)$ of auxiliary functions $\hat{\chi}_\varepsilon : \Omega \to \{-1,1\}$ whose level sets $\{\hat{\chi}_\e=1\}$ have uniformly bounded perimeter. Second, we show that the constructed auxiliary functions are close in $L^1(\Omega)$ to the original chirality functions $\chi(u_\e)$ defined according to \eqref{def: chirality}. 

\begin{step}{1} (Compactness of the auxiliary functions) Let $\e>0$ and define $\hat{\chi}_\e : \Omega \to \{-1,1\}$ by 
\begin{equation*}
\hat{\chi}_\e \defas \begin{cases} 1 & \text{if } \chi(u_\e) > 0\, , \\
-1 &\text{otherwise.}
\end{cases}
\end{equation*}
We claim that for every $\Omega'\wcont\Omega$ we have
\begin{equation}\label{ineq:boundperimeterchi1}
\mathcal{H}^1(\partial \{\hat{\chi}_\e=1\}\cap\Omega')\leq 5 F_\e(u_\e) \,.
\end{equation}
Then the uniform bound \eqref{comp:uniformbound} together with \cite[Theorem 3.39 and Remark 3.37]{AFP} yields the existence of a function $\chi\in BV(\Omega;\{-1,1\})$ and a subsequence (not relabelled) such that $\hat{\chi}_\e \to \chi $ in $L^1(\Omega)$. To prove \eqref{ineq:boundperimeterchi1} it is convenient to consider the class of triangles
\begin{equation*}
\T_\e^\pos\defas\{T\in\T_\e(\Omega)\colon\chi(u_\e,T)>0\ \text{and}\ \chi(u_\e,T')\leq 0\ \text{for some}\ T'\in\NN_\e(T)\cap \T_\e(\Omega) \}\, ,
\end{equation*}
where $\NN_\e(T)$ is as in \eqref{def:neighbors}.
Let $\Omega'\wcont\Omega$. By the very definition of $\hat{\chi}_\e$ and of $\chi(u_\e)$ we have
\begin{equation*}
\partial\{\hat{\chi}_\e=1\}\cap\Omega'\subset\partial\big(\bigcup_{T\in\T_\e^\pos}T\big)\, ,
\end{equation*}
provided $\sqrt{3}\e<\dist(\Omega', \partial \Omega)$.  Estimating the $\H^1$-measure of the latter set in terms of the cardinality of $\T_\e^\pos$ we thus infer
\begin{equation}\label{est:perimeter}
\H^1(\partial \{\hat{\chi}_\e=1\}\cap\Omega')\leq 3\e\#\T_\e^\pos\, .
\end{equation}
The last term in \eqref{est:perimeter} can be bounded using Lemma~\ref{lemma:energy of opposite chirality}. Indeed, from Lemma \ref{lemma:energy of opposite chirality} we deduce that
\begin{equation}\label{est:cardinality}
\frac{5}{3}\e\#\T_\e^\pos\leq\sum_{T\in\T_\e(\Omega)}\sum_{T'\in\NN_\e(T)\cap\T_\e(\Omega)}F_\e(u_\e,T\cup T')\leq 3 F_\e(u_\e),
\end{equation}
where the additional factor $3$ comes from the fact that each triangle is counted $3$ times. Thus, \eqref{ineq:boundperimeterchi1} follows from \eqref{est:perimeter} and \eqref{est:cardinality}.
\end{step}

\begin{step}{2} (Closeness to $\chi(u_\e)$) We claim that for every $\delta>0$ and every $\Omega'\wcont\Omega$ there holds
\begin{equation}\label{eq:measureconvergence}
\lim_{\varepsilon\to 0}\big|\{|\hat{\chi}_\e-\chi(u_\e)| >\delta\}\cap\Omega'\big| =0 \, ,
\end{equation}
\ie the functions $\hat{\chi}_\e-\chi(u_\e)$ converge to $0$ locally in measure. Since $\|\hat{\chi}_\e-\chi(u_\e)\|_\infty \leq 2$, this implies that $(\hat{\chi}_\e-\chi(u_\e))\to 0$ in $L^1(\Omega)$, which concludes the proof of the Proposition~\ref{prop:compactness} thanks to Step 1.
It remains to prove the claim \eqref{eq:measureconvergence}. Let $\Omega'\wcont\Omega$ and $\delta>0$ and let $C_\delta$ be given by \eqref{impl:delta}. Setting
\begin{equation*}
\T_\e^\delta\defas\{T\in\T_\e(\Omega)\colon \chi(u_\e,T)\in (-1+\delta,1-\delta)\}\, ,
\end{equation*}
%
for $\e$ sufficiently small we deduce that
\begin{equation*}
|\{|\hat{\chi}_\e-\chi(u_\e)| >\delta\}\cap\Omega'|\leq\frac{\sqrt{3}}{4}\e^2\#\T_\e^\delta\leq\frac{\sqrt{3}}{4}\e C_\delta^{-1}\sum_{T\in\T_\e^\delta}F_\e(u_\e,T)\leq\frac{\sqrt{3}}{4}\e C_\delta^{-1} F_\e(u_\e)\, .
\end{equation*}
Hence,~\eqref{eq:measureconvergence} follows from the uniform bound~\eqref{comp:uniformbound}.
%
%
    \end{step}
\end{proof}
\section{Lower Bound}\label{section:lowerbound}
In this section we start proving the main result of our paper, namely Theorem \ref{maintheorem} by presenting the optimal lower bound estimate on the energy $F_{\e}$, the technically most demanding part of our contribution. We begin with a blow-up argument that gives us a first asymptotic lower bound. 
\begin{proposition}\label{prop:lowerbound}
Let $F_\e$ be as in \eqref{eq:extended energy}. Then for every $\chi\in L^1(\Omega)$ we have
\begin{align*}
\Gamma\hbox{-}\liminf_{\e\to 0}F_\e(\chi)\geq F(\chi)\, ,
\end{align*}
where $F$ is given by \eqref{def: limit energy} and the $\Gamma\hbox{-}\liminf$ is with respect to the strong topology in $L^1(\Omega)$. 
\end{proposition}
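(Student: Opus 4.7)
The plan is to carry out a standard blow-up argument for surface energies, which localizes the lower bound at $\H^1$-a.e.\ point of $J_\chi$ and reduces the matter to the identification of two surface densities. Let $\chi_\varepsilon \to \chi$ in $L^1(\Omega)$ with $\liminf_\varepsilon F_\varepsilon(\chi_\varepsilon) < +\infty$ (otherwise there is nothing to prove) and pass to a subsequence so that the $\liminf$ becomes a limit. By Remark~\ref{rem:def:extenergy} I may pick $u_\varepsilon \in \SF_\varepsilon$ attaining the infimum in~\eqref{eq:extended energy}, so that $\chi(u_\varepsilon) = \chi_\varepsilon$ and $F_\varepsilon(u_\varepsilon) = F_\varepsilon(\chi_\varepsilon)$; Proposition~\ref{prop:compactness} then yields $\chi \in BV(\Omega;\{-1,1\})$. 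The proof then associates with $(u_\varepsilon)$ the sequence of nonnegative Radon measures
\begin{equation*}
\mu_\varepsilon := \sum_{T \in \T_\varepsilon(\Omega)} \frac{F_\varepsilon(u_\varepsilon,T)}{|T|}\, \L^2 \mres T,
\end{equation*}
so that $\mu_\varepsilon(\Omega) = F_\varepsilon(u_\varepsilon)$ is uniformly bounded. Up to a further subsequence, $\mu_\varepsilon \wsto \mu$ weakly-$\star$ as Radon measures on $\Omega$, with $\mu(\Omega) \leq \liminf_\varepsilon F_\varepsilon(u_\varepsilon)$. Writing $\mu = h\, \H^1 \mres J_\chi + \mu^s$ with $\mu^s \geq 0$ singular with respect to $\H^1 \mres J_\chi$, it suffices to prove $h(x_0) \geq \varphi(\nu_\chi(x_0))$ for $\H^1$-a.e.\ $x_0 \in J_\chi$.

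Next, I would fix $x_0 \in J_\chi$ that is simultaneously a Besicovitch differentiation point for $\mu$ with respect to $\H^1 \mres J_\chi$, a Lebesgue point of $\nu_\chi$, and a point at which the classical blow-up of $\chi$ satisfies $\chi(x_0 + \rho\, \cdot\,) \to \chi_\nu$ in $L^1(Q^\nu)$ as $\rho \to 0$, where $\nu := \nu_\chi(x_0)$; such points cover $\H^1$-a.e.\ of $J_\chi$. For $\H^1$-a.e.\ $\rho > 0$ one has $\mu(\partial Q_\rho^\nu(x_0)) = 0$, hence $\mu_\varepsilon(Q_\rho^\nu(x_0)) \to \mu(Q_\rho^\nu(x_0))$ as $\varepsilon \to 0$. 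Rescaling via $v_\varepsilon^\rho(y) := u_\varepsilon(x_0 + \rho y)$, viewed as a spin field on the lattice $\tfrac{1}{\rho}(\L_\varepsilon - x_0)$ (which, up to a shift negligible in the limit, coincides with $\L_{\varepsilon/\rho}$), a direct change of variables in the energy yields
\begin{equation*}
F_{\varepsilon/\rho}(v_\varepsilon^\rho, Q^\nu) = \tfrac{1}{\rho}\, F_\varepsilon(u_\varepsilon, Q_\rho^\nu(x_0)).
\end{equation*}
Combining this with the $L^1$-convergence $\chi(v_\varepsilon^\rho) \to \chi_\nu$ along a diagonal sequence $\varepsilon_j \to 0$, $\rho_j \to 0$, one concludes that
\begin{equation*}
h(x_0) = \lim_{\rho \to 0} \frac{\mu(Q_\rho^\nu(x_0))}{\rho} \geq \psi(\nu),
\end{equation*}
with $\psi$ as in~\eqref{intro:psi}.

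At this stage the lower bound has become an inequality between two surface densities, and the proof is completed by the identification $\psi(\nu) \geq \varphi(\nu)$. As sketched in the introduction, this step is carried out in three layers: Lemma~\ref{lem:independence} allows the computation of $\psi$ on arbitrarily thin rectangles sharing the same interface with $Q^\nu$, thereby forcing the energy of any admissible sequence to concentrate arbitrarily close to $L^\nu$; a refined slicing/averaging argument (Lemma~\ref{lemma:choosing strip}) then selects a discrete frame close to $\partial Q^\nu$ whose total energy is much smaller than what a plain De~Giorgi frame would yield; finally, the one-dimensional construction of Lemma~\ref{lemma:1d interpolation} exploits the $\S^1$-symmetry to slowly rotate a near-ground-state spin field along each slice of this frame into the prescribed ground states $u_\varepsilon^\pos$, $u_\varepsilon^\neg$ at controlled energy cost, after suitable control of the winding number of $u_\varepsilon$.

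The \emph{main obstacle} is precisely this last identification $\psi \geq \varphi$. The compactness provided by Proposition~\ref{prop:compactness} acts on the chirality, not on the spin field, so a naive cut-off in $\chi$ to match the boundary conditions of~\eqref{def: varphi} is unavailable: the range of the chirality map is not closed under truncation, and such a modification would produce functions with infinite $F_\varepsilon$-energy. The modification must therefore be performed directly at the spin level, where no compactness is available because of the $\S^1$-invariance of the energy; this is what forces both the sharp-energy frame obtained via Lemma~\ref{lem:independence} and Lemma~\ref{lemma:choosing strip} and the delicate slice-wise winding-number bookkeeping involved in the application of Lemma~\ref{lemma:1d interpolation}.
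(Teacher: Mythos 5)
Your proposal follows essentially the same route as the paper: a blow-up argument on the energy measures reduces the claim to the pointwise inequality $\mu_j(x_0)\geq\psi(\nu)$, which is then closed by the identification $\psi(\nu)\geq\varphi(\nu)$ of Proposition~\ref{prop:psi is phi} via Lemmas~\ref{lem:independence}, \ref{lemma:choosing strip}, and \ref{lemma:1d interpolation}. The one point to tidy up is the rescaling step: $v_\e^\rho(y):=u_\e(x_0+\rho y)$ lives on the shifted lattice $\tfrac{1}{\rho}(\L_\e-x_0)$ rather than on $\L_{\e/\rho}$, so one should first replace $x_0$ by a nearby lattice point $x_0^\e\in\L_\e$ and shrink the cube by $O(\e)$ before rescaling, exactly as the paper does, rather than merely appealing to the shift being ``negligible in the limit.''
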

\begin{proof}


Let $\chi_\e \to \chi$ in $L^1(\Omega)$. We assume that $\liminf_{\e}F_\e(\chi_\e) < +\infty$, otherwise we have nothing to prove. Moreover, upon extracting a (not relabeled) subsequence we can assume the liminf to be a limit and hence $\sup_\e F_\e(\chi_\e)<+\infty$. In view of Remark~\ref{rem:def:extenergy} we can find a sequence of spin fields~$u_\e\in\SF_\e$ with $\chi(u_\e)=\chi_\e$ and $F_\e(\chi_\e)=F_\e(u_\e)$. In particular, $\sup_\e F_\e(u_\e)<+\infty$. Thus, from Proposition~\ref{prop:compactness} we deduce that $\chi\in BV(\Omega;\{-1,1\})$. As a consequence, to prove the statement of the proposition it suffices to show that 
\begin{equation}\label{est:liminf}
\liminf_{\e\to 0}F_\e(u_\e)\geq \int_{\js{\chi}}\varphi(\nu_\chi)\dH ,
\end{equation}
where $\varphi$ is as in \eqref{def: varphi}. To prove \eqref{est:liminf} we consider the sequence of non-negative finite Radon measures~$\mu_\e$ given by
\begin{equation*}
\mu_\e\defas\sum_{T\in\T_\e(\Omega)}\e|u_\e(\e i)+u_\e(\e j)+u_\e(\e k)|^2\delta_{\e i}\, ,
\end{equation*}
where $\delta_{\e i}$ denotes the Dirac delta in $\e i$. From the condition $\sup_\e F_\e(u_\e)<+\infty$ it follows that $\sup_\e\mu_\e(\Omega)<+\infty$, hence there exists a non-negative finite Radon measure $\mu$ such that up to subsequences (not relabeled) $\mu_\e\wsto\mu$. By the Radon-Nikod\'{y}m Theorem the measure $\mu$ can be decomposed in the sum of two mutually singular non-negative measures as 
\begin{equation*}
\mu=\mu_j\H^1\mres \js{\chi}+\mu_s\, .
\end{equation*}
Then, to establish \eqref{est:liminf} it is sufficient to show that 
\begin{equation}\label{est:muj}
\mu_j(x_0)\geq\varphi(\nu_\chi(x_0))\quad\text{for $\H^1$-a.e.}\ x_0\in\js{\chi}\, ,
\end{equation}
where $\nu_\chi(x_0)$ denotes the measure theoretic normal to $\js{\chi}$ at $x_0$. To verify \eqref{est:muj} we choose $x_0\in\js{\chi}$ satisfying
\begin{enumerate}[label=(\roman*)]
\item $\displaystyle\mu_j(x_0)=\frac{{\rm d}\mu}{{\rm d}\H^1\mres\js{\chi}}(x_0)=\lim_{\rho\to 0}\frac{\mu(Q_\rho^\nu(x_0))}{\rho}$, where we have set $\nu\defas\nu_\chi(x_0)$,
\item $\displaystyle\lim_{\rho\to 0}\frac{1}{\rho^2}\int_{Q_\rho^\nu(x_0)\cap H^\nu_+(x_0)}|\chi_\e(x)-1|\dx=0=\lim_{\rho\to 0}\frac{1}{\rho^2}\int_{Q_\rho^\nu(x_0)\cap H^\nu_-(x_0)}|\chi_\e(x)+1|\dx$,
\end{enumerate}
and we notice that (i) and (ii) are satisfied for $\H^1$-a.e.\! $x_0\in\js{\chi}$ thanks to the Besicovitch derivation Theorem and the definition of approximate jump point, respectively. Moreover, since $\mu$ is a finite Radon measure, we can choose a sequence $\rho_n\to 0$ along which $\mu(\partial Q_{\rho_n}^\nu(x_0)) = 0$. Thanks to \cite[Proposition 1.62 (a)]{AFP}, the convergence $\mu_\e\wsto\mu$ together with (i) implies that
\begin{equation}\label{est:muj1}
    \mu_j(x_0) =\lim_{n\to+\infty}\frac{\mu(Q_{\rho_{n}}^\nu(x_0))}{\rho_n} = \lim_{n\to+\infty} \lim_{\e\to 0}\frac{\mu_\e( Q_{\rho_{n}}^\nu(x_0))}{\rho_n}\geq\lim_{n\to +\infty}\hspace*{-.2em}\limsup_{\e\to 0}\frac{1}{\rho_n}F_\e(u_\e,Q_{\rho_{n}}^\nu(x_0))\, ,
    \end{equation}
where the last inequality follows from the positivity of the energy. Notice that for every $n\in\N$ there exist sequences $(\rho_n^\e)$ and $(x_0^\e)$ with $\lim_\e\rho_n^\e=\rho_n$, $\lim_\e x_0^\e=x_0$, $x_0^\e\in\L_\e$, and 
\begin{equation*}
\T_\e(Q_{\rho_n^\e}^\nu(x_0^\e))\subset\T_\e(Q_{\rho_n}^\nu(x_0))\, .
\end{equation*}
In fact, if we write $x_0$ in terms of the basis $\tb_1,\tb_2$ as $x_0=a_1\tb_1+a_2\tb_2$ for some $a_1,a_2\in\R$, we obtain the required sequence $(x_0^\e)$ by setting
\begin{equation*}
x_0^\e\defas \e\Big\lfloor\frac{a_1}{\e}\Big\rfloor\tb_1+\e\Big\lfloor\frac{a_2}{\e}\Big\rfloor\tb_2\in\L_\e\, .
\end{equation*}
Then, upon noticing that $|x_0^\e-x_0|\leq 2\e$, it suffices to set $\rho_n^\e\defas\rho_n-4\e$. Indeed, if $T\in\T_\e(Q_{\rho_n^\e}^\nu(x_0^\e))$, by definition we have that for every $x\in T$
\begin{equation*}
|\langle x-x_0^\e,\nu\rangle|<\frac{\rho_n^\e}{2}\quad\text{and}\quad|\langle x-x_0^\e,\nu^\perp\rangle|<\frac{\rho_n^\e}{2} \, ,
\end{equation*}
so that for any $x\in T$ there also holds
\begin{equation*}
|\langle x-x_0,\nu\rangle|\leq|\langle x-x_0^\e,\nu\rangle|+|x_0^\e-x_0|<\frac{\rho_n}{2}\, ,
\end{equation*}
and similarly $|\langle x-x_0,\nu^\perp\rangle|<\rho_n/2$, hence $T\in \T_\e(Q_{\rho_n}^\nu(x_0))$. As a consequence, we obtain the following estimate
\begin{equation}\label{est:shiftedenergy}
\begin{split}
\frac{1}{\rho_n}F_\e(u_\e,Q_{\rho_{n}}^\nu(x_0))&\geq\frac{\rho_n^\e}{\rho_n}\frac{\e}{\rho_n^\e}\sum_{T\in\T_\e(Q_{\rho_{n}}^\nu(x_0^\e))}|u_\e(\e i)+u_\e(\e j)+u_\e(\e k)|^2\\
&=\frac{\rho_n^\e}{\rho_n}\sum_{T\in\T_{\sigma_n^\e}(Q^\nu_{\sigma_n^\e})}\sigma_n^\e|v_{\e,n}(\sigma_n^\e i)+v_{\e,n}(\sigma_n^\e j)+v_{\e,n}(\sigma_n^\e k)|^2  ,
\end{split}
\end{equation}
where we have set $\sigma_n^\e\defas\e/\rho_n^\e$ and $v_{\e,n}(z)\defas u_\e(x_0^\e+\rho_n^\e z)$ for every $z\in\L_{\sigma_n^\e}$. Let $\chi_\nu\colon\R^2\to\{-1,1\}$ be given by
\begin{align*}
\chi_\nu(x)\defas
\begin{cases}
1 &\text{if}\ \langle x,\nu\rangle\geq 0\, ,\\
-1 &\text{if}\ \langle x,\nu\rangle < 0\, .
\end{cases}
\end{align*}
Then (ii) ensures that $\chi(v_{\e,n})\to\chi_\nu$ in $L^1(Q^\nu)$ as first $\e\to 0$ and then $n\to+\infty$. Thus, gathering \eqref{est:muj1}--\eqref{est:shiftedenergy} and applying a diagonal argument we find a sequence $\sigma_m\defas\e_m/\rho_{n_m}$ converging to $0$ as $m\to+\infty$ such that for $v_m\defas v_{\e_m,n_m}$ there holds $\chi(v_m)\to \chi_\nu$ in $L^1(Q^\nu)$ and
\begin{equation*}
\mu_j(x_0)\geq\liminf_{m\to+\infty}F_{\sigma_m}(v_m,Q^\nu)\, .
\end{equation*}
For $\ell,h>0$ let us finally introduce the minimization problem
\begin{equation}\label{def:psi}
\psi(\ell,h,\nu)\defas\frac{1}{\ell}\inf\big\{\liminf_{\e\to 0}F_\e(u_\e,R_{\ell,h}^\nu)\colon\chi(u_\e)\to\chi_\nu\ \text{in}\ L^1(R_{\ell,h}^\nu)\big\}\, ,
\end{equation}
so that the sequence $(v_m)$ is admissible for $\psi(1,1,\nu)$. Then~\eqref{est:muj} follows from Proposition \ref{prop:psi is phi} below, concluding the proof of Proposition~\ref{prop:lowerbound}.
\end{proof}
\begin{proposition}\label{prop:psi is phi}
Let $\psi$ be the function defined in~\eqref{def:psi}.  Then $\psi(1,1,\nu)\geq\varphi(\nu)$ for every $\nu\in\S^1$. 
\end{proposition}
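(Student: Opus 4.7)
Given an admissible sequence $(u_\e)$ for $\psi(1,1,\nu)$ with $\chi(u_\e)\to\chi_\nu$ in $L^1(Q^\nu)$, the plan is to modify it into a new sequence $(\tilde u_\e)$ satisfying the discrete boundary conditions appearing in the cell formula for $\varphi(\nu)$, namely $\tilde u_\e=u_\e^\pos$ on $\partial_\e^+Q^\nu$ and $\tilde u_\e=u_\e^\neg$ on $\partial_\e^-Q^\nu$, at the price of an asymptotically negligible energy increase. Once this is done, $(\tilde u_\e)$ is admissible for the minimum problem defining $\varphi(\nu)$, and combining $\liminf_\e F_\e(\tilde u_\e,Q^\nu)\geq\varphi(\nu)$ with $F_\e(\tilde u_\e,Q^\nu)\leq F_\e(u_\e,Q^\nu)+o(1)$ yields the claim.

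The first ingredient is Lemma \ref{lem:independence}, whose consequence is that the energy of any admissible sequence for $\psi(1,1,\nu)$ concentrates near the interface $L^\nu$: for any small $h>0$ one has, up to a subsequence, $F_\e(u_\e,Q^\nu\setminus R_{1,h}^\nu)\to 0$ as $\e\to 0$. With this strong localization in hand, I would invoke the slicing/averaging argument of Lemma \ref{lemma:choosing strip} to select, near the top and bottom sides of $\partial Q^\nu$, two thin horizontal frames on which the total energy of $u_\e$ is of order $o(1)$. On each frame the chirality is therefore close to $+1$ (upper frame) or $-1$ (lower frame), so $u_\e$ is pointwise close to some rotation of $u_\e^\pos$ or $u_\e^\neg$.

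The modification is then performed row-by-row in each frame using Lemma \ref{lemma:1d interpolation}: on every one-dimensional slice the $\S^1$-symmetry of the energy permits $u_\e$ to be rotated continuously into the prescribed ground state at the outer boundary, at an energy cost proportional to the initial energy of that slice. Between the frame and $\partial Q^\nu$ I set $\tilde u_\e$ equal to $u_\e^\pos$ or $u_\e^\neg$, which contributes nothing since these are ground states. Summing over rows bounds $F_\e(\tilde u_\e,Q^\nu)\leq F_\e(u_\e,Q^\nu)+o(1)$, and after sending first $\e\to 0$ and then $h\to 0$ the conclusion follows.

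The principal obstacle is ensuring that the slice-wise rotations are globally compatible: the angular lifts chosen on neighbouring slices must agree on shared vertices so that $\tilde u_\e$ is a well-defined spin field. A direct cut-off on $\chi(u_\e)$ is unavailable, since a general $\{\pm 1\}$-valued function interpolating between the two chirality phases need not be the chirality of any spin field at all. One must instead work on $u_\e$, and compatibility of the rotations reduces to controlling the winding number of the angular lift of $u_\e$ along the closed curve delimiting the frame. This is the hard step of the proof: on a frame whose energy bound is small enough -- which is exactly what the sharper selection based on Lemma \ref{lem:independence} provides -- one shows that no vortex-like discontinuity of the angular lift can occur, so the rotation angle relative to $u_\e^\pos$ (resp. $u_\e^\neg$) is globally defined and admits a small, continuous interpolation to zero inside the frame, making the whole modification consistent.
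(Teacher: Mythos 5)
Your proposal follows the paper's proof in all essential respects: the same auxiliary lemmas (\ref{lem:independence}, \ref{lemma:choosing strip}, \ref{lemma:1d interpolation}), the same strategy of selecting a low-energy frame near $\partial Q^\nu$ and then interpolating slice-wise back to the prescribed ground states, and a correct identification of the winding-number compatibility of the slice rotations as the crux. One clarification on how that crux is actually resolved: the paper does not show that ``no vortex-like discontinuity'' of the lift occurs, nor that the residual rotation is small. Instead it defines an angular lift recursively along a chain of low-energy triangles crossing the frame (Lemma~\ref{lemma:chain of triangles}), bounds the total variation of this lift by $C\sqrt{\sigma_\e/\e}$ via Jensen's inequality applied to the $o(\e)$ frame-energy estimate, and uses this bound to pick a single winding target $m_\e$ in Lemma~\ref{lemma:1d interpolation} valid for every slice simultaneously; well-definedness on shared vertices is obtained by interpolating only on every other slice and bounding the energy on the remaining slices by triangle inequalities. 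The rotation may be large ($\sim\sqrt{\sigma_\e/\e}$ turns), but its cost $\sim m_\e^2/N_\e$ is small because it is distributed over $N_\e\sim 1/\e$ interpolation steps.
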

To prove Proposition \ref{prop:psi is phi} it is necessary to modify admissible sequences for the infimum problem defining $\psi(1,1,\nu)$ in such a way that they satisfy the boundary conditions required in the minimum problem defining $\varphi(\nu)$, without essentially increasing the energy. This will be done by a careful interpolation procedure based on several auxiliary results and estimates that we prefer to state in separate lemmas below. As a first step towards the proof of Proposition \ref{prop:psi is phi} we show that $\psi(\ell,h,\nu)$ is independent of $\ell$ and $h$, which in turn will allow us to conclude that the energy of admissible functions for $\psi(1,1,\nu)$ concentrates close to the line segment $L^\nu$ (see Lemma \ref{lemma:choosing strip} below).
\begin{lemma}\label{lem:independence}
Let $\psi \colon (0,+\infty)\x (0,+\infty)\x\S^1\to[0,+\infty]$ be given by \eqref{def:psi}; then $\psi(\cdot,\cdot,\nu)$ is independent of $\ell,h$ for every $\nu\in\S^1$.
\end{lemma}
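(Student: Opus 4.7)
The proof rests on a scaling reduction followed by an interpolation-extension construction. First, observe that the energy $F_\e$ scales covariantly: if $\lambda > 0$ and $v_{\lambda \e}(z) := u_\e(z/\lambda)$ on $\L_{\lambda \e}$, then a direct computation gives $F_{\lambda\e}(v_{\lambda \e}, R^\nu_{\lambda \ell, \lambda h}) = \lambda F_\e(u_\e, R^\nu_{\ell, h})$ (the extra $\lambda$ coming from the $\e$-prefactor in $F_\e$ while the number of lattice triangles is invariant). Combined with the $1/\ell$ normalization in~\eqref{def:psi}, this yields $\psi(\lambda \ell, \lambda h, \nu) = \psi(\ell, h, \nu)$ for every $\lambda > 0$; hence $\psi$ depends only on the aspect ratio $h/\ell$, and it suffices to prove that $h \mapsto \psi(1, h, \nu)$ is constant on $(0, +\infty)$.

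Monotonicity is the easy direction: if $h_1 \leq h_2$ and $(u_\e)$ is admissible for $\psi(1, h_2, \nu)$, then the $L^1$-convergence of $\chi(u_\e)$ on $R^\nu_{1, h_2}$ restricts to $R^\nu_{1, h_1}$, making $(u_\e)$ admissible for $\psi(1, h_1, \nu)$; since $F_\e(u_\e, R^\nu_{1, h_1}) \leq F_\e(u_\e, R^\nu_{1, h_2})$, we get $\psi(1, h_1, \nu) \leq \psi(1, h_2, \nu)$.

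The nontrivial direction is the reverse inequality $\psi(1, h_2, \nu) \leq \psi(1, h_1, \nu)$ when $h_2 > h_1$. I would build an admissible competitor $(v_\e)$ for $\psi(1, h_2, \nu)$ from a near-optimal admissible $(u_\e)$ for $\psi(1, h_1, \nu)$. Outside $R^\nu_{1, h_1}$, set $v_\e \equiv u_\e^\pos$ on $H^\nu_+$ and $v_\e \equiv u_\e^\neg$ on $H^\nu_-$, so no additional energy is incurred there. The obstacle is the discrete interface along $\partial R^\nu_{1, h_1}\cap\interior R^\nu_{1, h_2}$, where a raw gluing of $u_\e$ to $u_\e^\pm$ produces an $O(1)$ mismatch. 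To absorb this, apply a De~Giorgi averaging/pigeonhole argument over a sequence of candidate heights to select thin horizontal strips $S_\e^\pm$ adjacent to the top and bottom edges of $R^\nu_{1, h_1}$ on which $F_\e(u_\e, S_\e^\pm) = o(1)$. Inside each $S_\e^\pm$ the chirality is close to $\pm 1$, so by~\eqref{impl:delta} every triangle is close to a rotation of the corresponding ground state. One then applies the one-dimensional interpolation along lattice rows of $S_\e^\pm$ (in the spirit of Lemma~\ref{lemma:1d interpolation}, which uses the $\S^1$-symmetry to continuously rotate a near-ground-state spin field to any other representative of the same chirality at a cost proportional to the initial row energy) to redefine $u_\e$ inside $S_\e^\pm$ so that it matches $u_\e^\pm$ on the outer discrete boundary of $R^\nu_{1, h_1}$, paying only $O(F_\e(u_\e, S_\e^\pm)) = o(1)$ of additional energy.

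The main obstacle is step (iii) of the construction: matching the locally varying rotation that represents $u_\e$ relative to $u_\e^\pm$ on $S_\e^\pm$ to the fixed orientation of $u_\e^\pm$, consistently along the entire length of the edge and without locally violating the chirality constraint that makes $u_\e$ admissible. The rotation field need not be single-valued over the strip, and controlling its winding is the technical price; the low-energy averaging in step (i) is precisely what allows the cumulative interpolation cost to be infinitesimal. Once the construction is in place, $(v_\e)$ is admissible for $\psi(1, h_2, \nu)$ and $\liminf_\e F_\e(v_\e, R^\nu_{1, h_2}) \leq \liminf_\e F_\e(u_\e, R^\nu_{1, h_1}) + o(1)$; letting the optimality gap of $(u_\e)$ tend to zero yields $\psi(1, h_2, \nu) \leq \psi(1, h_1, \nu)$, completing the proof.
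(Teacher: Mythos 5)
Your Step~1 (dilation covariance, hence dependence only on the aspect ratio $h/\ell$) and your ``easy direction'' (monotonicity of $h\mapsto\psi(1,h,\nu)$) are correct and match the paper. The hard direction, however, is where your plan breaks down, and the paper deliberately takes an entirely different route.

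You propose to prove $\psi(1,h_2,\nu)\leq\psi(1,h_1,\nu)$ for $h_2>h_1$ by extending a near-optimal $u_\e$ on $R^\nu_{1,h_1}$ outward, matching it to $u_\e^{\pm}$ across a low-energy frame selected by De~Giorgi averaging, using a Lemma~\ref{lemma:1d interpolation}-type construction. The gap is that the averaging argument available \emph{at this stage} only gives a strip $S_\e$ with $F_\e(u_\e,S_\e)\leq C\e$, not $o(\e)$. This is not a technicality: in the interpolation, the winding number needed to absorb the unknown global rotation of $u_\e$ in $S_\e$ obeys, by a Jensen-type estimate as in Step~3 of Proposition~\ref{prop:psi is phi}, a bound of the type $m_\e\lesssim\sqrt{F_\e(u_\e,S_\e)/\e^2}$. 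With $F_\e(u_\e,S_\e)\sim\e$ this gives $m_\e\sim\e^{-1/2}$, and the total interpolation cost over $\sim\e^{-1}$ slices with $N_\e\sim\delta/\e$ steps contains a term of order $\e\,m_\e^2 N_\e^{-1}\cdot\e^{-1}\sim\delta^{-1}$, which does not vanish (indeed it blows up as $\delta\to 0$). To push the strip energy down to $o(\e)$ one needs to know that the energy of a near-optimal sequence concentrates near $L^\nu$ --- but that concentration (Lemma~\ref{lemma:choosing strip}) is itself a \emph{consequence} of the present Lemma~\ref{lem:independence}, so your argument is circular. The paper explicitly flags this obstruction in a footnote to estimate~\eqref{est:on top part}.

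The paper sidesteps the interpolation entirely. After establishing dilation invariance and the easy monotonicity in $h$, it proves $\psi(N\ell,h,\nu)=\psi(\ell,h,\nu)$ for integer $N$ by a pigeonhole argument \emph{in the $\ell$-direction}: subdivide $R^\nu_{N\ell,h}$ into $N$ congruent rectangles $R^\nu_{\ell,h}(x_m)$, choose the one with smallest energy, shift it to the origin, and slightly shrink to restore admissibility. This is a pure restriction argument, needing no gluing or boundary-matching. Combined with dilation ($\psi(N\ell,h,\nu)=\psi(\ell,h/N,\nu)$) and the easy monotonicity ($\psi(\ell,h/N,\nu)\leq\psi(\ell,h,\nu)$), one gets $\psi(\ell,h,\nu)\leq\psi(N\ell,h,\nu)=\psi(\ell,h/N,\nu)\leq\psi(\ell,h,\nu)$, forcing equality throughout, and in particular the nontrivial $\psi(\ell,h/N,\nu)=\psi(\ell,h,\nu)$. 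Rational and then real scaling factors follow by density. In short: the hard $h$-direction inequality is derived from an easy $\ell$-direction one via scaling, not by construction, and the interpolation machinery is reserved for Proposition~\ref{prop:psi is phi} once Lemma~\ref{lem:independence} has been used to obtain the $o(\e)$ strip bound.
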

\begin{proof}
Let $\nu\in\S^1$ be fixed. To show that $\psi(\cdot,\cdot,\nu)$ does not depend on $\ell,h$ it suffices to show that for every $\ell,h,\lambda>0$ the following identities hold
\begin{align}\label{ind:eq:1}
\psi(\lambda\ell,h,\nu)=\psi(\ell,h,\nu)\qquad\text{and}\qquad\psi(\ell,\lambda h,\nu)=\psi(\ell,h,\nu)\, .
\end{align}
Let us fix $\ell,h>0$. We first observe that
\begin{align}
\psi(\ell,\lambda h,\nu) &\geq\psi(\ell,h,\nu)\quad\text{for every}\ \lambda\in[1,+\infty) \, ,\label{ind:est:1a}\\
\psi(\ell,h,\nu) &\geq\psi(\ell,\lambda h,\nu)\quad\text{for every}\ \lambda\in (0,1) \, ,\label{ind:est:1b}
\end{align}
since $F_\e$ is increasing as a set function. The proof of \eqref{ind:eq:1} is now divided into three steps.

\begin{step}{1}
$\psi$ is invariant under dilations, \ie 
\begin{align}\label{ind:eq:2}
\psi(\lambda \ell,\lambda h,\nu)=\psi(\ell,h,\nu)\quad\text{for every}\ \lambda>0 \, .
\end{align}
Let $(u_\e)$ be any sequence of spin fields $u_\e\colon\L_\e\to\S^1$ with $\chi(u_\e)\to\chi_{\nu}$ in $L^1(R_{\lambda \ell,\lambda h}^\nu)$. We define the rescaled functions $v_\e:\L_{\e/\lambda}\to\S^1$ by setting $v_\e(z)\defas u_\e(\lambda z)$ for every $z\in\L_{\e/\lambda}$. Then $\chi(v_\e)\to \chi_\nu$ in $L^1(R_{\ell,h}^{\nu})$ and
\begin{equation*}
\begin{split}
F_{\frac{\e}{\lambda}}(v_\e,R_{\ell,h}^\nu) &=\sum_{T\in\T_{\frac{\e}{\lambda}}(R_{\ell,h}^\nu)}\frac{\e}{\lambda}|v_\e(\tfrac{\e}{\lambda} i)+v_\e(\tfrac{\e}{\lambda} j)+v_\e(\tfrac{\e}{\lambda} k)|^2\\
&=\frac{1}{\lambda}\sum_{T\in\T_\e(R_{\lambda \ell,\lambda h}^\nu)}\e|u_\e(\e i)+u_\e(\e j)+u_\e(\e k)|^2
=\frac{1}{\lambda}F_\e(u_\e,R_{\lambda \ell,\lambda h}^\nu) \, .
\end{split}
\end{equation*}
Setting $\eta\defas\e/\lambda\to 0$ as $\e\to 0$ and passing to the infimum over all admissible sequences $(u_\e)$ we deduce that
\begin{align*}
\psi(\lambda \ell,\lambda h,\nu)&\geq\frac{1}{\ell}\inf\big\{\liminf_{\eta\to 0}F_\eta(v_\eta,R_{\ell,h}^\nu)\colon\chi(v_\eta)\to\chi_{\nu}\ \text{in}\ L^1(R_{\ell,h}^\nu)\big\}=\psi(\ell,h,\nu) \, .
\end{align*}
The opposite inequality and hence \eqref{ind:eq:2} follow by observing that
\begin{align*}
\psi(\ell,h,\nu)=\psi(\lambda^{-1}(\lambda \ell),\lambda^{-1}(\lambda h),\nu)\geq\psi(\lambda \ell,\lambda h,\nu) \, .
\end{align*}
Note that thanks to \eqref{ind:eq:2} it suffices to show the first equality in \eqref{ind:eq:1}. In fact, if the first equality in \eqref{ind:eq:1} is true, from \eqref{ind:eq:2} we directly deduce that
\begin{equation*}
\psi(\ell,\lambda h,\nu)=\psi(\lambda^{-1}\ell, h,\nu)=\psi(\ell,h,\nu)\quad\text{for every}\ \lambda>0\, .
\end{equation*} 
\end{step}
\noindent\begin{step}{2}
We continue establishing the first equality in \eqref{ind:eq:1} by showing that
\begin{align}\label{ind:eq:3}
\psi(N\ell,h,\nu)=\psi(\ell,h,\nu)\quad\text{for every}\ N\in\N \, .
\end{align}
For $N\in\N$ fixed let $(u_\e)$ be a sequence of spin fields satisfying $\chi(u_\e)\to\chi_\nu$ in $L^1(R_{N\ell,h}^\nu)$. We subdivide the rectangle $R_{N\ell,h}^\nu$ in $N$ open rectangles of the form
\begin{align*}
R_{\ell,h}^\nu(x_m)\quad\text{with}\ x_m\defas\Big( m - \frac{N-1}{2}  \Big) \ell\nu^\perp\ \text{for}\ m\in\{0,\dots,N-1\}\, .
\end{align*}
Notice that $x\in R_{\ell,h}^\nu(x_m)$ if and only if
\begin{equation*}
\big|\langle x,\nu^\perp\rangle-\Big(m-\frac{N-1}{2}\Big)\ell\big|<\frac{\ell}{2}\quad\text{and}\quad |\langle x,\nu\rangle|<\frac{h}{2} \, ,
\end{equation*}
and therefore $R_{\ell,h}^\nu(x_m)\subset R_{N\ell,h}^\nu$ for all $m \in \{0,\ldots,N-1\}$. 
By choosing $m_0\in\{0,\ldots,N-1\}$ such that $F_\e(u_\e,R_{\ell,h}^\nu(x_{m_0}))\leq F_\e(u_\e,R_{\ell,h}^\nu(x_m))$ for every $m\in\{0,\ldots,N-1\}$ we obtain the estimate
\begin{align}\label{ind:est:3}
\frac{1}{N\ell}F_\e(u_\e,R_{N\ell,h}^\nu)\geq\frac{1}{N\ell}\sum_{m=0}^{N-1}F_\e(u_\e,R_{\ell,h}^\nu(x_m))\geq \frac{1}{\ell} F_\e(u_\e,R_{\ell,h}^\nu(x_{m_0})) \, .
\end{align}
We now define a suitable shifted version of $u_\e$ whose energy is concentrated in a rectangle centered at zero. To this end, as in the proof of Proposition \ref{prop:lowerbound} it is convenient to write the vector $\nu^\perp$ in terms of the basis $\{\tb_1,\tb_2\}$ as $\nu^\perp=a_1\tb_1+a_2\tb_2$ for some $a_1,a_2\in\R$ and to introduce the vector $x_{m_0}^\e\in\L_\e$ given by
\begin{align*}
x_{m_0}^\e\defas\e\Big\lfloor\frac{\big(m_0-\frac{N-1}{2}\big)\ell a_1}{\e}\Big\rfloor\tb_1+\e\Big\lfloor\frac{\big(m_0-\frac{N-1}{2}\big)\ell a_2}{\e}\Big\rfloor \tb_2\, .
\end{align*}
We then define spin fields $v_\e\colon\L_\e\to\S^1$ by setting $v_\e(z)\defas u_\e(z+x_{m_0}^\e)$.
As in the proof of Proposition~\ref{prop:lowerbound} we notice that $|x_{m_0}^\e-x_{m_0}|\leq 2\e$, $\chi(v_\e)\to \chi_\nu$ in $L^1(R_{\ell,h}^\nu)$ and $R_{\ell-4\e,h-4\e}^\nu(x_{m_0}^\e)\subset R_{\ell,h}^\nu(x_{m_0})$. Let us fix $\lambda\in (0,1)$ and $\e_\lambda>0$ sufficiently small such that $\ell-4\e_\lambda>\lambda\ell$, $h-4\e_\lambda>\lambda h$. Then for every $\e\in (0,\e_\lambda)$ there holds $\T_\e(R_{\lambda\ell,\lambda h}^\nu(x_{m_0}^\e))\subset \T_\e(R_{\ell,h}^\nu(x_{m_0}))$, hence
\begin{equation*}
\frac{1}{\ell} F_\e(v_\e,R_{\lambda\ell,\lambda h}^\nu)\leq \frac{1}{\ell}F_\e(u_\e,R_{\ell,h}^\nu(x_{m_0}))\, .
\end{equation*}
Moreover, since $v_\e$ is admissible for $\psi(\lambda\ell,\lambda h,\nu)$, we have
\begin{equation}\label{ind:est:5}
\lambda\psi(\lambda \ell,\lambda h,\nu)\leq\frac{1}{\ell}\liminf_{\e\to 0}F_\e(v_\e,R_{\lambda \ell,\lambda h}^\nu)\, .
\end{equation}
Combining \eqref{ind:eq:2} in Step 1 with \eqref{ind:est:3}--\eqref{ind:est:5}, in view of the arbitrariness of $u_\e$ we finally obtain
\begin{equation*}
\lambda\psi(\ell,h,\nu)=\lambda\psi(\lambda\ell,\lambda h,\nu)\leq \psi(N\ell,h,\nu)\, .
\end{equation*}
Thus, by letting $\lambda\to 1$ we deduce that $\psi(\ell,h,\nu)\leq\psi(N\ell,h,\nu)$. Finally, \eqref{ind:eq:3} follows from \eqref{ind:eq:2} and \eqref{ind:est:1b} by observing that
\begin{equation*}
\psi(\ell,h,\nu)\leq\psi(N\ell,h,\nu)=\psi(\ell,\tfrac{h}{N},\nu)\leq\psi(\ell,h,\nu)\, .
\end{equation*}
\end{step}

\begin{step}{3}
We prove the first equality in \eqref{ind:eq:1}. Suppose first that $\lambda\in(0,+\infty)\cap\Q$. Then $\lambda=N/M$ for some $N,M\in\N$, hence applying twice \eqref{ind:eq:3} yields
\begin{equation}\label{ind:eq:4}
\psi(\lambda\ell,h,\nu)=\psi(\tfrac{N}{M}\ell,h,\nu)=\psi(\tfrac{1}{M}\ell,h,\nu)=\psi(M(\tfrac{1}{M}\ell),h,\nu)=\psi(\ell,h,\nu)\, .
\end{equation}
Suppose now that $\lambda\in(0,+\infty)$ and let $(\lambda_n)\subset(0,+\infty)\cap\Q$ with $\lambda_n\to\lambda$ as $n\to+\infty$, $\lambda_n>\lambda$ for every $n\in\N$. Thanks to \eqref{ind:eq:2} and \eqref{ind:eq:4} we deduce that
\begin{equation*}
\psi(\lambda\ell,h,\nu)=\psi(\lambda_n\ell,\tfrac{\lambda_n}{\lambda}h,\nu)=\psi(\ell,\tfrac{\lambda_n}{\lambda}h,\nu)\geq\psi(\ell,h,\nu)\, ,
\end{equation*}
where the last inequality follows from \eqref{ind:est:1a}, since $\lambda_n/\lambda>1$. To prove the opposite inequality it suffices to take a sequence $(\lambda_n)\subset(0,+\infty)\cap\Q$ converging to $\lambda$ with $\lambda_n<\lambda$. Then, arguing as before and now applying \eqref{ind:est:1a} we obtain
\begin{equation*}
\psi(\lambda\ell,h,\nu)=\psi(\ell,\tfrac{\lambda_n}{\lambda}h,\nu)\leq\psi(\ell,h,\nu)\, ,
\end{equation*} 
hence equality follows.
\end{step}
\end{proof}
On account of Lemma~\ref{lem:independence} we show that for a sequence $(u_\e)$ realizing the infimum in the definition of $\psi(1,1,\nu)$ the energy concentrates close to the line $L^\nu$. As a consequence, we obtain that outside a small neighborhood of $L^\nu$ there exists a suitable strip on which the energy is of order $o(\e)$. 
To be more precise, for fixed $\nu\in\S^1$, $\delta>0$, and every $\e>0$ we introduce the class $\mathscr{S}_{\e,\delta}^\nu$ of strips
\begin{equation}\label{def:strips}
\mathscr{S}_{\e,\delta}^\nu\defas\Big\{Q_{r+\cnu\e}^\nu\setminus \big(\overline{Q}_{r}^\nu\cup\overline{R}_{1,\delta}^\nu\big)\colon r\in(1-3\delta,1-2\delta)\Big\}\, .
\end{equation}
We denote the elements of $\mathscr{S}_{\e,\delta}^\nu$ by $S_{\e,r}$. Then the following result holds true.
\begin{lemma}\label{lemma:choosing strip}
Let $\nu\in\S^1$ and let $(u_\e)$ be a sequence such that $\chi(u_\e)\to\chi_\nu$ in $L^1(Q^\nu)$ and $F_\e(u_\e,Q^\nu)\to\psi(1,1,\nu)$. Then for every $\delta>0$ there exists a sequence $\sigma_\e\to0$ (depending on $\delta$) and a strip $S_\e=S_{\e,r_\e}\in\mathscr{S}_{\e,\delta}^\nu$ such that
\begin{equation}\label{est:FeSe}
F_\e(u_\e,S_\e)+\|\chi(u_\e)-\chi_\nu\|_{L^1(S_\e)}\leq \e\sigma_\e\, .
\end{equation}
\end{lemma}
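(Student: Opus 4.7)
The plan is to combine the scaling invariance of $\psi$ proved in Lemma \ref{lem:independence} with a De~Giorgi--type averaging over the family $\mathscr{S}_{\e,\delta}^\nu$. The crucial input from Lemma \ref{lem:independence} is that it allows us to localize essentially all of the $\psi(1,1,\nu)$ energy inside the thin rectangle $R_{1,\delta}^\nu$ around $L^\nu$; without this localization a standard averaging would only yield an $O(\e)$ bound on $F_\e(u_\e,S_\e)$, not the $o(\e)$ bound required by \eqref{est:FeSe}.

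For the concentration step I would invoke Lemma \ref{lem:independence} to write $\psi(1,\delta,\nu)=\psi(1,1,\nu)$. Since by restriction $\chi(u_\e)\to \chi_\nu$ in $L^1(R_{1,\delta}^\nu)$, the sequence $(u_\e)$ is admissible for $\psi(1,\delta,\nu)$, so
$$\liminf_{\e\to 0} F_\e(u_\e, R_{1,\delta}^\nu) \geq \psi(1,\delta,\nu) = \psi(1,1,\nu) = \lim_{\e\to 0} F_\e(u_\e, Q^\nu).$$
Because $\T_\e(R_{1,\delta}^\nu)$ and $\T_\e(Q^\nu\setminus R_{1,\delta}^\nu)$ are disjoint subfamilies of $\T_\e(Q^\nu)$, we have $F_\e(u_\e, Q^\nu\setminus R_{1,\delta}^\nu)\leq F_\e(u_\e,Q^\nu)-F_\e(u_\e,R_{1,\delta}^\nu)$, whence $F_\e(u_\e,Q^\nu\setminus R_{1,\delta}^\nu)\to 0$. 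Combined with the trivial $L^1$ convergence, setting
$$\tau_\e\defas F_\e(u_\e, Q^\nu\setminus R_{1,\delta}^\nu)+\|\chi(u_\e)-\chi_\nu\|_{L^1(Q^\nu\setminus R_{1,\delta}^\nu)}$$
gives $\tau_\e\to 0$.

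For the averaging step I would set $N_\e\defas\lfloor\delta/(\cnu\e)\rfloor$ and pick uniformly spaced radii $r_k\defas 1-3\delta+\cnu k\e+\cnu\e/2$ for $k=0,\ldots,N_\e-1$, all lying in $(1-3\delta,1-2\delta)$ once $\e$ is small enough. The strips $\{S_{\e,r_k}\}$ are pairwise disjoint and contained in $Q^\nu\setminus R_{1,\delta}^\nu$: indeed, from \eqref{def:strips} any point of $S_{\e,r}$ must have one of its $\nu$- or $\nu^\perp$-coordinates (in absolute value) in the open interval $(r/2,(r+\cnu\e)/2)$, and these annular bands are disjoint for distinct $r_k$'s at spacing $\cnu\e$. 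A pigeonhole applied to
$$\sum_{k=0}^{N_\e-1}\Big(F_\e(u_\e,S_{\e,r_k})+\|\chi(u_\e)-\chi_\nu\|_{L^1(S_{\e,r_k})}\Big)\leq\tau_\e$$
then yields some index $k_\e$ satisfying \eqref{est:FeSe} with $\sigma_\e\defas(2\cnu/\delta)\tau_\e\to 0$ and $S_\e\defas S_{\e,r_{k_\e}}$. The only nontrivial ingredient is the concentration step, for which Lemma \ref{lem:independence} is indispensable; everything else is a routine slicing/averaging.
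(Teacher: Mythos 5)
Your proposal is correct and follows essentially the same route as the paper: Lemma~\ref{lem:independence} to obtain $F_\e(u_\e,Q^\nu\setminus R_{1,\delta}^\nu)\to 0$, then a De~Giorgi averaging over $\approx\delta/(\cnu\e)$ disjoint strips in $\mathscr{S}^\nu_{\e,\delta}$ to extract one with the $o(\e)$ bound. The only deviations are cosmetic (your radii $r_k$ are shifted by $\cnu\e/2$, which in fact keeps all of them strictly inside $(1-3\delta,1-2\delta)$, and your constant $2\cnu/\delta$ replaces the paper's $13/\delta$), and you correctly identify the concentration step as the nontrivial ingredient.
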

\begin{proof}
Let $\nu\in\S^1$ and $(u_\e)$ be as in the statement and let $\delta>0$ be fixed. For every Borel set $A\subset Q^\nu$ set
\begin{equation*}
G_\e(u_\e, A)\defas F_\e(u_\e,A)+\int_A|\chi(u_\e)-\chi_\nu|\dx\, .
\end{equation*}
We consider for $\e$ small enough the family of pairwise disjoint strips $S_{\e,r_\e^m}\in\mathscr{S}_{\e,\delta}^\nu$ with $r_\e^m=1-3\delta+\cnu m\e$ and $m\in\{0,\ldots,\lfloor\tfrac{\delta}{\cnu\e}\rfloor-1\}$ and we notice that
\begin{equation*} 
\bigcup_{m=0}^{\lfloor\frac{\delta}{\cnu\e}\rfloor-1}S_{\e,r_\e^m}\subset Q_{1-2\delta}^\nu\setminus(\overline{Q}_{1-3\delta}^\nu\cup \overline{R}_{1,\delta}^\nu)\subset Q^\nu\setminus\overline{R}_{1,\delta}^\nu\, .
\end{equation*}
This implies in particular that
\begin{equation*}
\sum_{m=0}^{\lfloor\frac{\delta}{\cnu\e}\rfloor-1}G_\e(u_\e,S_{\e,r_\e^m})\leq G_\e\Big(u_\e,\bigcup_{m=0}^{\lfloor\frac{\delta}{\cnu\e}\rfloor-1}S_{\e,r_\e^m}\Big)\leq F_\e(u_\e,Q^\nu\setminus\overline{R}_{1,\delta}^\nu)+\int_{Q^\nu\setminus\overline{R}_{1,\delta}^\nu}|\chi(u_\e)-\chi_\nu|\dx\, .
\end{equation*}
Averaging over $m\in\{0,\dots,\lfloor\tfrac{\delta}{\cnu\e}\rfloor-1\}$ we thus find $m(\e)$ such that the strip $S_{\e,r_\e^{m(\e)}}$ satisfies
\begin{equation}\label{est:Ge:averaging}
G_\e(u_\e,S_{\e,r_\e^{m(\e)}})\leq\Big\lfloor\frac{\delta}{\cnu\e}\Big\rfloor^{-1}\big(F_\e(u_\e,Q^\nu\setminus\overline{R}_{1,\delta}^\nu)+\|\chi(u_\e)-\chi_\nu\|_{L^1(Q^\nu)}\big)\, .
\end{equation}
Notice that $F_\e(u_\e,Q^\nu\setminus\overline{R}_{1,\delta}^\nu)\to 0$ as $\e\to 0$. In fact, Lemma \ref{lem:independence} together with the choice of $(u_\e)$ yields
\begin{align*}
\psi(1,1,\nu) &=\lim_{\e\to 0}F_\e(u_\e,Q^\nu)\geq\limsup_{\e\to 0}F_\e(u_\e,R_{1,\delta}^\nu)\geq\liminf_{\e\to 0}F_\e(u_\e,R_{1,\delta}^\nu)\geq\psi(1,\delta,\nu)=\psi(1,1,\nu)\, ,
\end{align*}
from which we readily deduce that $F_\e(u_\e,R_{1,\delta}^\nu)\to\psi(1,1,\nu)$ as $\e\to 0$, hence
\begin{equation*}
F_\e(u_\e,Q^\nu\setminus\overline{R}_{1,\delta}^\nu)\leq F_\e(u_\e,Q^\nu)-F_\e(u_\e,R_{1,\delta}^\nu)\ \to 0\ \text{as}\ \e\to 0\, .
\end{equation*}
Thus, in view of~\eqref{est:Ge:averaging}, it suffices to set $\sigma_\e\defas\tfrac{13}{\delta}(F_\e(u_\e,Q^\nu\setminus\overline{R}_{1,\delta}^\nu)+\|\chi(u_\e)-\chi_\nu\|_{L^1(Q^\nu)})$ and $r_\e\defas r_\e^{m(\e)}$ to find the required strip $S_{\e, r_\e}\in\mathscr{S}_{\e,\delta}^\nu$ satisfying \eqref{est:FeSe}.
\end{proof}
We are now in a position to start with the interpolation procedure mentioned before. The final interpolation procedure will be based on a one-dimensional construction that we introduce below. 

\noindent\textbf{One-dimensional interpolation}. To define the one-dimensional interpolation we consider slices in the triangular lattice. To this end, let $\tb_1$, $\tb_2$, and $\tb_3$ be as in Section~\ref{subsec:lattice}. Given $\alpha \in \{1,2,3\}$ we consider the orthogonal vector $\tb_\alpha^\perp$ to $\tb_\alpha$ and we define the slice in the direction~$\tb_\alpha$ by
        \begin{equation*} 
            \Sigma^{\alpha}  := \big\{ s \tb_\alpha +  t \tb_\alpha^\perp \colon s \in \R \, , \ t \in [0, \tfrac{\sqrt{3}}{2}]  \big\} \, .
        \end{equation*}
        Given $z \in \Z$, we define
        \begin{equation*}
            \Sigma^{\alpha,z} := \Sigma^{\alpha} + \tfrac{\sqrt{3}}{2} z \, \tb_\alpha^\perp = \big\{ s \tb_\alpha +  t \tb_\alpha^\perp \colon s \in \R \, , \ t \in [\tfrac{\sqrt{3}}{2} z, \tfrac{\sqrt{3}}{2}(z+1)]  \big\} \, .
        \end{equation*}
        Finally, for every $\e$ we set 
        \begin{equation} \label{def:slice}
            \Sigma^{\alpha,z}_\e := \e \Sigma^{\alpha,z} .
        \end{equation}

        We shall define the one-dimensional interpolation in a slice $\Sigma^{\alpha}$ starting from a triangle $T_0\in\T(\R^2)$ such that $T_0 \subset \Sigma^{\alpha}$. Let us denote by $i_0  \in \L^1$, $j_0   \in \L^2$, $k_0   \in\L^3$ the vertices of $T_0$. Note that $\langle i_0, \tb_\alpha^\perp \rangle, \langle j_0, \tb_\alpha^\perp \rangle, \langle k_0, \tb_\alpha^\perp \rangle \in \{0,\tfrac{\sqrt{3}}{2}\}$. We define the lattice points $i_h \in \L^1$, $j_h \in \L^2$, $k_h\in\L^3$ and the triangle $T_h$ with the following recursive formula: we set $\tau(0) := 1$, $\tau(\tfrac{\sqrt{3}}{2}) := -1$ and for $h\in\N$
\begin{equation} \label{def:vertices in slice}
    \begin{split}
        i_{h+1} &:= i_h +   \tb_\alpha + \tfrac{1}{2} \tb_\alpha + \tau(\langle i_h , \tb_\alpha^\perp \rangle) \tfrac{\sqrt{3}}{2} \tb_\alpha^\perp \, ,\\
        j_{h+1} &:= j_h + \tb_\alpha + \tfrac{1}{2} \tb_\alpha + \tau(\langle j_h , \tb_\alpha^\perp \rangle) \tfrac{\sqrt{3}}{2} \tb_\alpha^\perp \, ,\\
        k_{h+1} &:= k_h + \tb_\alpha + \tfrac{1}{2} \tb_\alpha + \tau(\langle k_h , \tb_\alpha^\perp \rangle) \tfrac{\sqrt{3}}{2} \tb_\alpha^\perp \, , \\
        T_{h+1} & := \conv\{i_{h+1},j_{h+1},k_{h+1}\} \subset \Sigma^\alpha \, ,
    \end{split}
\end{equation}
(see Figure \ref{fig:interpolation}).
Observe that $\tau(\langle i_{h+1} , \tb_\alpha^\perp \rangle) = -\tau(\langle i_h , \tb_\alpha^\perp \rangle)$, the analogous equality being true also for~$j_h$ and~$k_h$. Moreover, $T_{2h} = T_0 + 3 h \tb_\alpha$.

We define the half-slice $\Sigma^\alpha(T_0)$ of the lattice $\L$ starting from $T_0$ by
\begin{equation}\label{def:lattice slice}
\Sigma^\alpha(T_0)\defas\conv\{T_h \colon h \in \N \}\, .
\end{equation} 

Given $u\colon\L\to\S^1$ and $N,m\in\N$, we now define in the half-slice $\Sigma^\alpha(T_0)$ a one-parameter family (parametrized by $m$) of spin fields which coincides with $u$ on $T_0$ and with the fixed ground state $u^\pos$ on $T_h$ for $h \geq N$. We construct the interpolation in such a way that the configuration of spins rotates a fixed amount of times by $2\pi$. To make the construction precise, we first say that the three angles $\theta(i_0)\in \R$ (not necessarily in $[0,2\pi)$), $\theta(j_0)\in[\theta(i_0)-\pi,\theta(i_0)+\pi)$ and $\theta(k_0)\in[\theta(j_0)-\pi,\theta(j_0)+\pi)$ represent a lifting of $u$ in $T_{0}$ if $u(i_0)=\exp(\iota \theta(i_0))$, $u(j_0)=\exp(\iota \theta(j_0))$ and $u(k_0)=\exp(\iota \theta(k_0))$. We then define the interpolated angles $\theta(i_h),\theta(j_h),\theta(k_h)$ for $h=0,\dots,N$ by
\begin{equation} \label{def:interpolated angles}
    \begin{split}
        \theta(i_h) & \defas\theta(i_0)+h\frac{2\pi m-\theta(i_0)}{N} = \Big(1 - \frac{h}{N}\Big)\theta(i_0) + \frac{h}{N}2\pi m  \, , \\
        \theta(j_h) & \defas\theta(j_0)+h\frac{2\pi m+\frac{2\pi}{3}-\theta(j_0)}{N} = \Big(1 - \frac{h}{N}\Big)\theta(j_0) + \frac{h}{N}2\pi m + \frac{h}{N}\frac{2\pi}{3} \, , \\
        \theta(k_h) & \defas\theta(k_0)+h\frac{2\pi m+\frac{4\pi}{3}-\theta(k_0)}{N} =  \Big(1 - \frac{h}{N}\Big)\theta(k_0) + \frac{h}{N}2\pi m + \frac{h}{N}\frac{4\pi}{3}\, , \\
    \end{split}
\end{equation}
and $\theta(i_h) := 2\pi m$, $\theta(j_h) := 2\pi m+\frac{2\pi}{3}$, $\theta(k_h) := 2\pi m+\frac{4\pi}{3}$ for $h \geq N+1$ (see Figure \ref{fig:interpolation}).
Eventually, we define $u^{N,m}\colon\L\cap\Sigma^\alpha(T_0)\to\S^1$ by setting
\begin{equation}\label{def:interpolation}
    u^{N,m}(i_h)\defas\exp(\iota \theta(i_h))\, ,\quad u^{N,m}(j_h)\defas\exp(\iota \theta(j_h))\, ,\quad u^{N,m}(k_h)\defas\exp(\iota \theta(k_h))\, .
\end{equation}
Note that  $u^{N,m} = u^\pos$ on $T_h$ for $h \geq N$.

\begin{figure}[H]
   \includegraphics{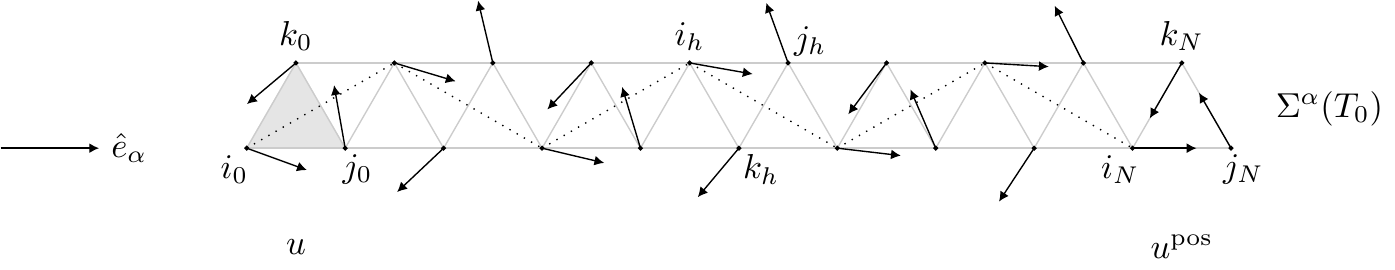}
\caption{Example of interpolation from $u$ to $u^\pos$ in the slice $\Sigma^\alpha(T_0)$ starting from the triangle $T_0$ (in grey).}\label{fig:interpolation}
\end{figure}

In the next lemma we estimate the energy of the interpolation on $\Sigma^\alpha(T_0)$ in terms of the energy on the initial triangle $T_0$ plus an error depending on the number of steps $N$ and on $m$. We assume that the configuration of spins in the initial triangle is sufficiently close to a ground state with chirality 1 (not necessarily $u^\pos$).

\begin{lemma} \label{lemma:1d interpolation}
Let $T_0\in\T(\R^2)$ be a triangle of vertices $i_{0}\in\L^{1}$, $j_{0}\in\L^{2}$, and $k_{0}\in\L^{3}$. Let $u\colon\L\to\S^1$ and let $\theta(i_0) \in \R$, $\theta(j_0) \in [\theta(i_0) - \pi, \theta(i_0) + \pi)$ and $\theta(k_0) \in [\theta(j_0)-\pi, \theta(j_0)+\pi)$ be three angles representing a lifting of $u$ in $T_{0}$ satisfying 
    \begin{equation} \label{assumption:angles}
         \Big|  \theta(j_0) - \theta(i_0) -\frac{2\pi}{3} \Big|  \leq \frac{1}{4}  \, , \quad  \Big|  \theta(k_0) - \theta(j_0) -\frac{2\pi}{3} \Big|  \leq \frac{1}{4}  \, .
    \end{equation}
    Let $N,m \in \N$ and assume that 
    \begin{equation} \label{assumption:m}
        2\pi m \geq |\theta(i_0)|+2\pi \, .
    \end{equation}
    Let $u^{N,m}$ be the interpolation on $\Sigma^\alpha(T_0)$ defined according to~\eqref{def:interpolation}. Then there exists a constant $C>0$ independent of $N$ and $m$ such that
    \begin{equation*} 
    F_1(u^{N,m},\Sigma^\alpha(T_0))\leq C \Big(N F_1(u,T_0)+\frac{m^2}{N}\Big) \, .
    \end{equation*}
    \end{lemma}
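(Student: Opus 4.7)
The plan is to bound $F_1(u^{N,m},T)$ triangle-by-triangle via a second-order Taylor expansion of the angular-lift formula \eqref{F:angularlift} around the ground-state configuration, and then sum over $T \subset \Sigma^\alpha(T_0)$. By \eqref{def:interpolated angles}, the interpolated angles on $T_h$ coincide with those of $u^\pos$ as soon as $h \geq N$, so any triangle all of whose vertices have index $\geq N+1$ contributes zero energy. It therefore suffices to sum over the triangles whose vertices have indices in $\{0,1,\ldots,N\}$. I would split these into the \emph{indexed} triangles $T_h$ and the \emph{mixed} triangles whose vertices are drawn from $T_h \cup T_{h+1}$; a direct check of the slice geometry shows that the number of mixed triangles per index $h$ is bounded by a universal constant.

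Set $\Delta_1 := \theta(j_0) - \theta(i_0) - 2\pi/3$ and $\Delta_2 := \theta(k_0) - \theta(j_0) - 2\pi/3$; by \eqref{assumption:angles}, $|\Delta_1|, |\Delta_2| \leq 1/4$. A second-order expansion of \eqref{F:angularlift} in $(\alpha_1, \alpha_2) := (\theta_j - \theta_i - 2\pi/3,\, \theta_k - \theta_j - 2\pi/3)$ around $(0,0)$ yields, after cancellation of the first-order terms (which are proportional to $\sin(2\pi/3) + \sin(2\pi/3) - \sin(4\pi/3) = 0$, the infinitesimal version of the ground-state optimality),
\[
    F_1(u,T) = \alpha_1^2 + \alpha_1\alpha_2 + \alpha_2^2 + O(|\alpha|^3),
\]
uniformly in a neighborhood of the origin. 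Since $\alpha_1^2 + \alpha_1\alpha_2 + \alpha_2^2 \geq \tfrac{1}{2}(\alpha_1^2+\alpha_2^2)$, applied on $T_0$ this gives $F_1(u,T_0) \geq c(\Delta_1^2 + \Delta_2^2)$ for a universal $c>0$.

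On the indexed triangle $T_h$, \eqref{def:interpolated angles} gives $\alpha_1 = (1-h/N)\Delta_1$ and $\alpha_2 = (1-h/N)\Delta_2$, whence $F_1(u^{N,m},T_h) \leq C(1-h/N)^2(\Delta_1^2 + \Delta_2^2) \leq C'(1-h/N)^2 F_1(u,T_0)$. Summing over $h \in \{0,\ldots,N\}$ produces at most $C''N F_1(u,T_0)$. On a mixed triangle, say one with vertices $\{j_h, k_h, i_{h+1}\}$, a direct computation from \eqref{def:interpolated angles} gives $\alpha_1 = (1-h/N)\Delta_1 - \mu$ and $\alpha_2 = (1-h/N)\Delta_2$, where
\[
    \mu := \frac{2\pi m - \theta(i_0)}{N};
\]
by \eqref{assumption:m}, $|\mu| \leq 4\pi m/N$. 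The analogous computation for the other mixed-triangle types yields angle errors of the same form $(1-h/N)\Delta_i + O(\mu)$, so in every case
\[
    F_1(u^{N,m},T) \leq C\bigl((1-h/N)^2(\Delta_1^2+\Delta_2^2) + \mu^2\bigr).
\]
Summing over $h \in \{0,\ldots,N-1\}$ produces an additional contribution bounded by $CN\mu^2 \leq Cm^2/N$, which combined with the indexed-triangle estimate gives the claim.

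The main technical obstacle is the combinatorial bookkeeping for the mixed triangles: one has to enumerate all their vertex patterns explicitly, compute the corresponding angle differences, and check that in each case the deviation from $2\pi/3$ splits additively into a contribution $(1-h/N)\Delta_i$ and an $O(\mu)$ term, so that the Taylor-with-remainder argument applies uniformly. A minor subtlety is the validity range of the expansion: if $m/N$ is so large that $|\mu|$ exceeds a fixed constant, the quadratic approximation breaks down, but then the trivial bound $F_1 \leq 9$ on each triangle together with the count of $O(N)$ relevant triangles yields a total $\leq CN \leq C\,m^2/N$ in that regime, so the estimate holds automatically.
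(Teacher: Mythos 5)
Your proposal is correct and follows essentially the same skeleton as the paper's proof: decompose $F_1(u^{N,m},\Sigma^\alpha(T_0))$ into the indexed triangles $T_h$ and the two families of mixed triangles, bound the indexed ones via a second-order Taylor expansion of $1+2\cos(\phi)$ around $2\pi/3$ (resp.\ $4\pi/3$) with cancellation of the linear terms, observe that the angle deviations on $T_h$ are $(1-h/N)$-contractions of those on $T_0$, and account for an extra $O(\mu^2)$ per mixed triangle with $\mu=(2\pi m-\theta(i_0))/N$. The one place where your route differs is the mixed-triangle estimate. You Taylor-expand the mixed triangles directly in the shifted angle coordinates, which forces you to enumerate the vertex patterns, verify the additive splitting $(1-h/N)\Delta_i+O(\mu)$ in each case, and then handle the regime $|\mu|\gtrsim 1$ separately with the trivial bound $F_1\leq 9$ (your case split is valid: $\mu\geq c_0$ combined with $\mu\leq 4\pi m/N$ forces $N\leq Cm$, whence $CN\leq Cm^2/N$). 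The paper avoids all of this with the elementary inequality
\[
|u^{N,m}(i_{h+1})+u^{N,m}(j_h)+u^{N,m}(k_h)|^2\leq 2\,|u^{N,m}(i_h)+u^{N,m}(j_h)+u^{N,m}(k_h)|^2+2\,|u^{N,m}(i_{h+1})-u^{N,m}(i_h)|^2,
\]
reducing the mixed triangle to the already-estimated indexed one plus a single displacement term $2-2\cos(\theta(i_{h+1})-\theta(i_h))$, which is bounded by $\mu^2$ via $1-\cos t\leq t^2/2$ \emph{for all} $t$ — no case split, no pattern enumeration. Both approaches prove the lemma; the paper's is cleaner on the mixed triangles, and you may want to adopt that trick to shorten the bookkeeping you flagged as the main technical obstacle.
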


    %
    \begin{proof} It is not restrictive to assume that $j_0-i_0=\tb_\alpha$ as in Figure \ref{fig:interpolation}. We shall estimate each of the terms in the sum
        \begin{equation} \label{eq:energy in slice}
            \begin{split}
                 F_1(u^{N,m},\Sigma^\alpha(T_0)) = & \sum_{h=0}^{N-1} |u^{N,m}(i_h) + u^{N,m}(j_h) + u^{N,m}(k_h)|^2  \\
                 + & \sum_{h=0}^{N-1} |u^{N,m}(i_{h+1}) + u^{N,m}(j_h) + u^{N,m}(k_h)|^2 \\
                  + & \sum_{h=0}^{N-1} |u^{N,m}(i_{h+1}) + u^{N,m}(j_h) + u^{N,m}(k_{h+1})|^2,
            \end{split}
        \end{equation}
        where we used that for $h \geq N$ we have that 
        \begin{equation*}
        |u^{N,m}(i_h) + u^{N,m}(j_h) + u^{N,m}(k_h)|^2 = |u^\pos(i_h) + u^\pos(j_h) + u^\pos(k_h)|^2 = 0\, ,
        \end{equation*}
         being $u^\pos$ a ground state. Adopting the notation for the angles used in the construction in~\eqref{def:interpolation}, we recast the energy in the first term of the sum as
        \begin{equation} \label{eq:energy with cos}
            \begin{split}
                & |u^{N,m}(i_h) + u^{N,m}(j_h) + u^{N,m}(k_h)|^2 \\
                & \quad = 3 + 2 \cos(\theta(j_h) - \theta(i_h)) + 2 \cos(\theta(k_h) - \theta(j_h)) + 2 \cos(\theta(i_h) - \theta(k_h)) \, .
            \end{split}
        \end{equation}
        Note that, by~\eqref{def:interpolated angles} and~\eqref{assumption:angles},
        \begin{equation} \label{est:angles}
            \begin{split}
                 \Big| \theta(j_h) - \theta(i_h) - \frac{2\pi}{3} \Big| & \leq \Big|  \theta(j_0) - \theta(i_0) -\frac{2\pi}{3} \Big|  \leq \frac{1}{4}  \, , \\
                 \Big| \theta(k_h) - \theta(j_h) - \frac{2\pi}{3} \Big| & \leq \Big|  \theta(k_0) - \theta(j_0) -\frac{2\pi}{3} \Big|  \leq \frac{1}{4}  \, , \\
                 \Big| \theta(k_h) - \theta(i_h)  - \frac{4\pi}{3} \Big|  & \leq \Big| \theta(k_0) - \theta(i_0)   - \frac{4\pi}{3} \Big|  \leq \frac{1}{2} \, .
            \end{split}
        \end{equation}
By Taylor's formula, there exists $\zeta\in[\phi,2\pi/3]$ such that $1 + 2\cos(\phi) = - \sqrt{3} (\phi-\frac{2\pi}{3}) + \frac{1}{2} (\phi-\frac{2\pi}{3})^2 + \frac{1}{3} \sin(\zeta) (\phi-\frac{2\pi}{3})^3$. As a result we obtain the estimates
        \begin{equation*} 
             \frac{1}{3} \Big(\phi-\frac{2\pi}{3}\Big)^{\! 2} \leq 1 + 2\cos(\phi) + \sqrt{3}  \Big(\phi-\frac{2\pi}{3}\Big) \leq  \frac{2}{3} \Big(\phi-\frac{2\pi}{3}\Big)^{\! 2} , \quad \text{for } \Big|\phi-\frac{2\pi}{3}\Big| \leq \frac{1}{2} \, .
        \end{equation*} 
        Analogously,
        \begin{equation*} 
              \frac{1}{3} \Big(\phi-\frac{4\pi}{3}\Big)^{\! 2} \leq 1 + 2\cos(\phi) - \sqrt{3}  \Big(\phi-\frac{4\pi}{3}\Big) \leq   \frac{2}{3} \Big(\phi-\frac{4\pi}{3}\Big)^{\! 2} , \quad   \text{for } \Big|\phi-\frac{4\pi}{3}\Big| \leq \frac{1}{2} \, .
        \end{equation*}
        Then by~\eqref{eq:energy with cos},~\eqref{est:angles}, and the two previous estimates we infer that  
        \begin{equation*}
            \begin{split}
                & |u^{N,m}(i_h) + u^{N,m}(j_h) + u^{N,m}(k_h)|^2 \\
                 & \quad \leq \frac{2}{3} \Big[ \Big(\theta(j_h) - \theta(i_h)-\frac{2\pi}{3}\Big)^{\! 2} +   \Big(\theta(k_h) - \theta(j_h)-\frac{2\pi}{3}\Big)^{\! 2} +   \Big(\theta(k_h) - \theta(i_h) - \frac{4\pi}{3}\Big)^{\! 2} \Big]\\
                & \quad \leq  \frac{2}{3} \Big[ \Big(\theta(j_0) - \theta(i_0)-\frac{2\pi}{3}\Big)^{\! 2} +   \Big(\theta(k_0) - \theta(j_0)-\frac{2\pi}{3}\Big)^{\! 2} + \Big(\theta(k_0) - \theta(i_0)  - \frac{4\pi}{3}\Big)^{\! 2} \Big] \\
                & \quad \leq \frac{2}{3} 3 |u^{N,m}(i_0) + u^{N,m}(j_0) + u^{N,m}(k_0)|^2 = 2  F_1(u,T_0) \, .
            \end{split}
            \end{equation*}
            This proves that 
            \begin{equation*} 
                \sum_{h=0}^{N-1} |u^{N,m}(i_h) + u^{N,m}(j_h) + u^{N,m}(k_h)|^2 \leq 2 N F_1(u,T_0)  \, .
            \end{equation*}
    
            Let us now consider the second term in the sum in the right-hand side of~\eqref{eq:energy in slice}. For every $h=0,\dots,N-1$ we have
            \begin{equation*}
                \begin{split}
                    & |u^{N,m}(i_{h+1}) + u^{N,m}(j_h) + u^{N,m}(k_h)|^2 \\
                    & \quad \leq 2 \, |u^{N,m}(i_{h}) + u^{N,m}(j_h) + u^{N,m}(k_h)|^2 + 2 \, |u^{N,m}(i_{h+1}) - u^{N,m}(i_h)|^2 .
                \end{split}
            \end{equation*}
            The first term is estimated as via $2F_1(u,T_0)$. As for $|u^{N,m}(i_{h+1}) - u^{N,m}(i_h)|^2$, by~\eqref{def:interpolated angles} we have that 
            \begin{equation*}
                    |u^{N,m}(i_{h+1}) - u^{N,m}(i_h)|^2 = 2 - 2 \cos(\theta(i_{h+1}) - \theta(i_h)) = 2 - 2 \cos\Big(\frac{2\pi m-\theta(i_0)}{N}\Big)\, .  
			\end{equation*}
			Using the fact that $1-\cos(t)\leq \tfrac{t^2}{2}$ we deduce
			\begin{equation*}
                    |u^{N,m}(i_{h+1}) - u^{N,m}(i_h)|^2 \leq \Big(\frac{2\pi m-\theta(i_0)}{N}\Big)^{\! 2} \leq C \frac{m^2}{N^2} \, , 
            \end{equation*}
            since $|\theta(i_0)| \leq  2 \pi m$.   Hence
            \begin{equation*}
                \sum_{h=0}^{N-1} |u^{N,m}(i_{h+1}) + u^{N,m}(j_h) + u^{N,m}(k_h)|^2 \leq C N F_1(u,T_0) + C \frac{ m^2}{N} \, .
            \end{equation*}
            
            The third term in the right-hand side in~\eqref{eq:energy in slice} is treated analogously using the inequality
            \begin{equation*}
                \begin{split}
                    |u^{N,m}(k_{h+1}) - u^{N,m}(k_h)|^2 & = 2 - 2 \cos(\theta(k_{h+1}) - \theta(k_h)) = 2 - 2 \cos\Big(\frac{2\pi m+\frac{4\pi}{3}-\theta(k_0)}{N}\Big)  \\
                    & \leq \Big(\frac{2\pi m+\frac{4\pi}{3}-\theta(k_0)}{N}\Big)^{\! 2}  \leq C \frac{ m^2}{N^{2}}\, ,
                \end{split}
            \end{equation*}
          where we used~\eqref{assumption:angles} to get that $|\theta(k_0)|+\frac{4\pi}{3} \leq |\theta(i_0)|+\frac{1}{2}+\frac{4\pi}{3} \leq |\theta(i_0)| +  2 \pi \leq 2 \pi (m+1)$. 
    \end{proof}
We are now in a position to prove Proposition~\ref{prop:psi is phi} and thus conclude the proof of the lower bound in Proposition~\ref{prop:lowerbound}.

\begin{proof}[Proof of Proposition~\ref{prop:psi is phi}] For the reader's convenience, we recall here the definitions of $\varphi(\nu)$ and $\psi(1,1,\nu)$:
    \begin{align*} 
        \varphi(\nu) & = \lim_{\varepsilon \to 0} \min\{ F_\e(u,Q^\nu) \colon u = u^\pos_\e \text{ on } \partial_\e^+ Q^\nu  \text{ and } u = u^\neg_\e \text{ on } \partial_\e^- Q^\nu \} \, , \\
        \psi(1,1,\nu)& = \inf\big\{\liminf_{\e\to 0}F_\e(u_\e,Q^\nu)\colon\chi(u_\e)\to\chi_\nu\ \text{in}\ L^1(Q^\nu)\big\}\, .
    \end{align*}
    Let us fix a sequence $(u_\e)$ such that $\chi(u_\e)\to\chi_\nu$ in $L^1(Q^\nu)$ and $F_\e(u_\e,Q^\nu) \to \psi(1,1,\nu)$. The aim of this proof is to define a modification $\tilde u_\e$ of $u_\e$ such that 
    \begin{gather}
        \tilde u_\e = u^\pos_\e \text{ on } \partial_\e^+ Q^\nu  \text{ and } \tilde u_\e = u^\neg_\e \text{ on } \partial_\e^- Q^\nu  ,  \label{condition:bc}\\
        \limsup_{\e \to 0} F_\e(\tilde u_\e,Q^\nu) \leq \lim_{\e \to 0} F_\e(u_\e,Q^\nu)\, . \label{condition:energy estimate}
    \end{gather}
    This allows us to conclude that $\varphi(\nu) \leq \psi(1,1,\nu)$. 

    The construction of the modified sequence $(\tilde u_\e)$ is divided in several steps.

    \begin{step}{1} (Choosing a strip with low energy). 
        We begin the construction by exploiting the property that the energy of $(u_\e)$ concentrates close to the interface $Q^\nu\cap L^\nu$ in order to choose a strip with low energy. Given $\delta \in (0,\tfrac{1}{3})$, we consider the family of strips $\mathscr{S}^\nu_{\e,\delta}$ defined in~\eqref{def:strips} and we apply Lemma~\ref{lemma:choosing strip} to deduce the existence of a strip $S_\e = S_{\e,r_\e} = Q_{r_\e+12\e}^\nu\setminus \big(\overline{Q}_{r_\e}^\nu\cup\overline{R}_{1,\delta}^\nu\big)\in \mathscr{S}^\nu_{\e,\delta}$ such that 
        \begin{equation}  \label{est:energy on slice}
            F_\e(u_\e,S_\e)  + \|\chi(u_\e)-\chi_\nu\|_{L^1(S_\e)} \leq \e \sigma_\e  \, ,
        \end{equation}
            where $\sigma_\e \to 0$.  The modification $\tilde u_\e$ of $u_\e$ will coincide with~$u^\pos_\e$ and~$u^\neg_\e$ in~$Q^{\nu} \sm (\overline Q^\nu_{1-\delta} \cup \overline R^{\nu}_{1,\delta})$ (notice that the square $Q^\nu_{1-\delta}$ contains  the closure of $S_\e$, \cf~\eqref{def:strips}).    In the triangles contained in~$S_\e$ the energy is low and thus $u_\e$ is close to ground states, yet not necessarily $u_\e^\pos$ or $u_\e^\neg$. There~$\tilde u_\e$ will start to interpolate from the configuration $u_\e$ until it reaches the fixed ground state $u_\e^\pos$ or~$u_\e^\neg$ close to the boundary.

            We shall describe in detail how to define $\tilde u_\e$ in the top part of the cube given by $Q_+^\nu = Q^\nu \cap \{x \colon \langle x, \nu \rangle > 0\}$, where the chirality of $u_\e$ converges to $1$. The construction in $Q_-^\nu \cap \{x \colon \langle x, \nu \rangle < 0\}$ is completely analogous.
    \end{step}

    \begin{step}{2} (Choosing triangles with low energy). We show here how to choose the triangles with low energy where to start the modification of $u_\e$. Let us consider the line  
        \begin{equation*} 
            L_\e := \{ x \in \R^2 \colon \langle x , \nu \rangle = \tfrac{r_\e}{2} + 3\e \} \, ,
        \end{equation*}
         which cuts in two the top part of the strip given by the rectangle
         \begin{equation} \label{def:top}
            S_\e^{\mathrm{top}} := R_{r_\e,6\e}^\nu\big((\tfrac{r_\e}{2}+3\e)\nu\big)= \big(L_\e + B_{3\e}(0)\big) \cap R^\nu_{r_\e,1} \subset S_\e\, .
         \end{equation}
          We describe now how to start the modification in $S_\e^{\mathrm{top}}$. The modification in the other parts 
          \begin{align*}
            S^{\mathrm{left}}_\e & := R_{r_\e,6\e}^{\nu^\perp}\big((\tfrac{r_\e}{2}+3\e)\nu^\perp\big)\sm \overline R^\nu_{1,\delta}\, ,\\
	S^{\mathrm{right}}_\e & := R_{r_\e,6\e}^{\nu^\perp}\big(-(\tfrac{r_\e}{2}+3\e)\nu^\perp\big)\sm \overline R^\nu_{1,\delta} \, ,
        \end{align*}
        \cf Figure~\ref{fig:top}, will be only sketched  since it is completely analogous. 
\begin{figure}[H]
   \includegraphics{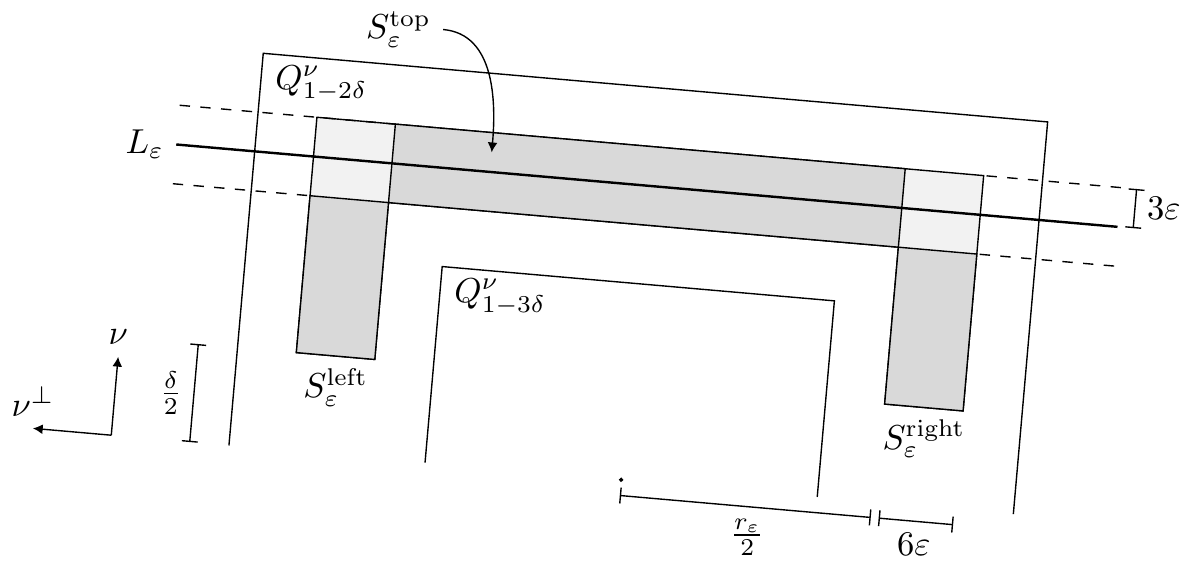}
\caption{Parts of the strip $S_\e$ in $Q^\nu_+$. }
\label{fig:top}
\end{figure}

    We consider now the slices $(\Sigma^{\alpha,z}_\e)_{z \in \Z}$ of the $\e$-triangular lattice defined in~\eqref{def:slice}. We choose $\alpha \in \{1,2,3\}$ such that $|\langle \tb_\alpha , \nu \rangle | \geq \frac{\sqrt{3}}{2}$, namely the best approximation of $\nu$ in the set~$\{\tb_1,\tb_2,\tb_3\}$. Equivalently, $|\langle \tb_\alpha , \nu^\perp \rangle | \leq \frac{1}{2}$, where $\nu^\perp$ is the direction of $L_\e$. (For $S^{\mathrm{right}}_\e$ and $S^{\mathrm{left}}_\e$ we consider a different direction, namely $\beta \in \{1,2,3\}$ such that $|\langle \tb_\beta , \nu^\perp \rangle | \geq \frac{\sqrt{3}}{2}$.)

        We can find a chain of closed triangles which intersect $L_\e$ such that each slice in the direction~$\tb_\alpha$ contains only one triangle of the chain. Specifically, there exist $(T_z)_{z \in \Z}$, satisfying 
        \begin{equation} \label{eq:properties chain}
            T_z \in \mathcal{T}^+_\e(\R^2)  \, , \quad  T_z \subset \Sigma^{\alpha,z}_\e , \quad  T_z \cap L_\e \neq \emptyset \, , \quad T_z \cap T_{z+1} \neq \emptyset \, ,
        \end{equation}
        for every $z \in \Z$, \cf Figure~\ref{fig:chain}. We prove this statement in Lemma~\ref{lemma:chain of triangles} below, since the geometric argument is irrelevant for the present discussion.  
\begin{figure}[H]
    \includegraphics{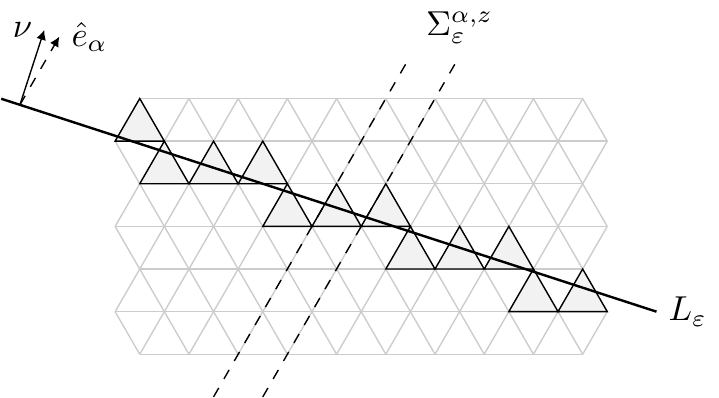}
\caption{A chain of triangles $(T_z)_{z \in \Z}$.}
\label{fig:chain}
\end{figure}

    The modification of $u_\e$ starts in the triangles~$T_z$ of the chain contained in~$S_\e^{\mathrm{top}}$. For this reason it is convenient to consider 
    \begin{equation*}
        \mathcal{Z}^{\mathrm{top}}_\e := \{ z \in \Z  \colon T_z \subset S_\e^{\mathrm{top}} \} \quad\text{and}\quad z_0 \in \argmin \mathcal{Z}^{\mathrm{top}}_\e\, .
    \end{equation*}
For future purposes we observe that
    \begin{equation} \label{est:number of triangles}
        \frac{\sqrt{3}}{4} \e^2 \# \mathcal{Z}^{\mathrm{top}}_\e = \Big|   \bigcup_{z \in \mathcal{Z}^{\mathrm{top}}_\e } T_z  \Big| \leq |S_\e^{\mathrm{top}}| =  6 \e r_\e \leq 6\e \quad \implies  \quad \# \mathcal{Z}^{\mathrm{top}}_\e \leq  \frac{C_1}{\e} \, ,
    \end{equation}
    for some positive constant $C_1$ and for $\e$ small enough.
\end{step}

\begin{step}{3} (Estimating the maximal winding number). The energy regime we are working in does not rule out the possibility that inside the strip $S_\e$ the configuration of spin field displays global rotations. However, the bound of the energy in $S_\e$ allows us to estimate the maximal number of complete turns of $2\pi$. To present precisely the estimate, we define in the triangles chosen in Step~2 the liftings $\theta_\e \in \R$  of $u_\e$ according to the following recursive argument. Given~$z \in \mathcal{Z}^{\mathrm{top}}_\e$ we denote by $i_z \in \L^1$, $j_z \in \L^2$, $k_z \in \L^3$ the points in the sublattices such that $\e i_z, \e j_z, \e k_z$ are the vertices of the triangle $T_z$ (some points might have multiple labels). We now define recursively angles $\theta_\e(\e i_z), \theta_\e(\e j_z), \theta_\e(\e k_z)$ in suitably chosen intervals of length $2\pi$ satisfying $u_\e(\e i_{z}) = \exp\big(\iota \theta_\e (\e i_{z})\big), u_\e(\e j_{z}) = \exp\big(\iota \theta_\e (\e j_{z})\big), u_\e(\e k_{z}) = \exp\big(\iota \theta_\e (\e k_{z})\big)$ as follows. We choose
\begin{equation*}
\begin{split}
& \theta_\e (\e i_{z_0}) \in [0,2\pi)\, ,\\
& \theta_\e(\e j_{z_0})\in[\theta_\e(\e i_{z_0})-\pi,\theta_\e(\e i_{z_0})+\pi)\, ,\\
& \theta_\e(\e k_{z_0}) \in [\theta_\e (\e j_{z_0}) - \pi, \theta_\e (\e j_{z_0}) + \pi)\, ,\\
& \theta_\e (\e i_{z+1}) \in [\theta_\e (\e i_{z}) - \pi, \theta_\e (\e i_{z}) + \pi)\, .
\end{split}
\end{equation*}
The choice of $\theta_\e(\e j_z)$ and $\theta_\e(\e k_z)$ is made according to the same recursive procedure above, but taking as starting point (instead of   
$\theta_\e (\e i_{z_0})$) the angles $\theta_\e(\e j_{z_0})$ and $\theta_\e(\e k_{z_0})$, respectively. 
    We claim that  
    \begin{equation} \label{est:maximal m}
    \frac{1}{2\pi}\sup_{\substack{ z \in \mathcal{Z}^{\mathrm{top}}_\e \\ z \geq z_0 }} \big\{ |\theta_\e(\e i_z) - \theta_\e(\e i_{z_0})| \, ,  \ |\theta_\e(\e j_z) - \theta_\e(\e j_{z_0})| \, , \ |\theta_\e(\e k_z) - \theta_\e(\e k_{z_0})|   \big\} \leq C_2 \sqrt{\frac{\sigma_\e}{\e}}\, ,
    \end{equation}
    for some positive constant $C_2$. To prove the claim, let us fix $z_* \in \mathcal{Z}^{\mathrm{top}}_\e$, $z_* \geq z_0$. Note that $z_* - z_0 \leq \frac{C_1}{\e}$ by~\eqref{est:number of triangles}. Jensen's inequality implies that
    \begin{equation} \label{est:angle difference}
        \begin{split}
            |\theta_\e(\e i_{z_*}) - \theta_\e(\e i_{z_0})|^2 & \leq \Big( \sum_{z=z_0}^{z_*-1} |\theta_\e(\e i_{z+1}) - \theta_\e(\e i_z)| \Big)^{\! 2} \leq (z_*-z_0) \sum_{z=z_0}^{z_*-1} |\theta_\e(\e i_{z+1}) - \theta_\e(\e i_z)|^2 \\
            & \leq \frac{C_1}{\e} \sum_{z=z_0}^{z_*-1} |\theta_\e(\e i_{z+1}) - \theta_\e(\e i_z)|^2 \leq \frac{C}{\e} \sum_{z=z_0}^{z_*-1} 2-2\cos\big(\theta_\e(\e i_{z+1}) - \theta_\e(\e i_z)\big)  \\
            & = \frac{C}{\e} \sum_{z=z_0}^{z_*-1} |u_\e(\e i_{z+1}) - u_\e(\e i_z)|^2   
        \end{split}
    \end{equation}
    for some positive constants $C$, where we used the fact that $1 - \cos(\phi) \geq \frac{1}{12} \phi^2$ for every $|\phi| \leq \pi$. We start observing that the regular hexagon $H_z$ containing $T_z$ and $T_{z+1}$ is contained in $S_\e$. Indeed, let $x \in H_z$ and let $y \in T_z \cap L_\e \subset S_\e^\mathrm{top}$. Then $\dist(x,L_\e) \leq |x - y| \leq \diam H_z = 2 \e < 3  \e$. Hence, \cf \eqref{def:top}, $x \in (L_\e + B_{3\e}(0)) \cap R^\nu_{r_\e + 6 \e,1} \subset S_\e$. Let us show that 
   \begin{equation} \label{est:with hexagon}
       |u_\e(\e i_{z+1}) - u_\e(\e i_{z})|^2 \leq \frac{2}{\e}   F_\e(u_\e, H_z) \, .
   \end{equation} 
  Indeed, if $T_z \cap T_{z+1} = \{\e i_z\} = \{\e i_{z+1}\}$, then $|u_\e(\e i_{z+1}) - u_\e(\e i_{z})|^2 = 0$; if $T_z \cap T_{z+1} = \{\e j_z\} = \{\e j_{z+1}\}$ (and analogously if $T_z \cap T_{z+1} = \{\e k_z\} = \{\e k_{z+1}\}$), then we let $T'$ be the third triangle in $\conv\{T_z, T_{z+1}\}$. The triangle $T'$ is either $\conv\{\e i_z, \e j_z, \e k_{z+1}\}$ or $\conv\{\e i_{z+1}, \e j_z, \e k_z\}$ and is always contained in $H_z$, see Figure~\ref{fig:hexagon}. Letting $\e k$ be its vertex in $\L^3_\e$ (either $\e k_z$ or $\e k_{z+1}$) we have that
    \begin{equation*}  
        \begin{split}
            |u_\e(\e i_{z+1}) - u_\e(\e i_{z})|^2 & \leq 2 \, |u_\e(\e i_{z+1}) + u_\e(\e j_{z+1}) + u_\e(\e k)|^2  + 2 \, |u_\e(\e i_{z}) + u_\e(\e j_{z}) + u_\e(\e k)|^2 \\
            & \leq \frac{2}{\e}   F_\e(u_\e, H_z) \, .
        \end{split}
    \end{equation*} 

    \begin{figure}[H]
        \includegraphics{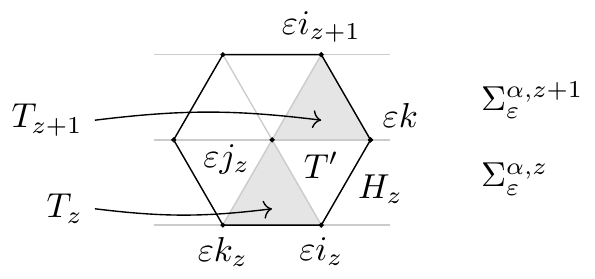}
        \caption{Triangle $T'$ in a possible configuration of $T_z$ and $T_{z+1}$.}
        \label{fig:hexagon}
    \end{figure}

    Then we estimate the last sum in~\eqref{est:angle difference} using~\eqref{est:with hexagon} by
    \begin{equation*}
        \sum_{z=z_0}^{z_*-1} |u_\e(\e i_{z+1}) - u_\e(\e i_{z})|^2 \leq \sum_{z=z_0}^{z_*-1} \frac{2}{\e} F_\e(u_\e,H_{z}) \leq \frac{C}{\e}  F_\e(u_\e,S_\e)  \leq C \sigma_\e \, ,
    \end{equation*}
    for some positive constant $C$. In conclusion, by~\eqref{est:angle difference} we have that 
    \begin{equation*}
        |\theta_\e(\e i_{z_*}) - \theta_\e(\e i_{z_0})| \leq C \sqrt{\frac{\sigma_\e}{\e}} \, .
    \end{equation*}
    
    Arguing in an analogous way for $|\theta_\e(\e j_{z_*}) - \theta_\e(\e j_{z_0})|$ and $|\theta_\e(\e k_{z_*}) - \theta_\e(\e k_{z_0})|$, we conclude the proof of the claim~\eqref{est:maximal m}.  

    We consider the bound on the maximal winding number given by 
    \begin{equation} \label{def:winding number}
        m_\e := \Big\lceil \,  C_2 \sqrt{\frac{\sigma_\e}{\e}} \, \Big\rceil  + 8\, ,
    \end{equation}
    where $\lceil \,  C_2 \sqrt{\frac{\sigma_\e}{\e}} \, \rceil$ is the smallest natural number grater than or equal to $ C_2 \sqrt{\frac{\sigma_\e}{\e}}$ and $C_2$ is the constant given in~\eqref{est:maximal m}. 
    \end{step}

\begin{step}{4} (Modification on slices). We define the modification on the slices $\Sigma^{\alpha,z}_\e$ starting from triangles $T_z$ with $z \in \mathcal{Z}^{\mathrm{top}}_\e$ by reproducing the construction in Lemma~\ref{lemma:1d interpolation}. Here we make precise the choice of the parameters for this construction and the notation. Let us assume, without loss of generality, that $\langle \tb_\alpha , \nu \rangle  \geq \frac{\sqrt{3}}{2}$ (if, instead, $\langle \tb_\alpha , \nu \rangle   \leq - \frac{\sqrt{3}}{2}$ we work with $- \tb_\alpha$). For $z\in \mathcal{Z}^{\mathrm{top}}_\e$ we let $i_z^0 := i_z \in \L^1$, $j_z^0 := j_z \in \L^2$, $k_z^0 := k_z \in \L^3$ where $\e i_z$, $\e j_z$, $\e k_z$ are the vertices of $T_z$. As in~\eqref{def:vertices in slice}, we define the lattice points $i_z^h \in \L^1$, $j_z^h \in \L^2$, $k_z^h\in\L^3$ and the triangle $T_z^h$ with the following recursive formula: for $h\in\N$ we set
    \begin{equation} \label{def:vertices in slice eps}
        \begin{split}
            i_z^{h+1} &:= i_z^h +   \tb_\alpha + \tfrac{1}{2} \tb_\alpha + \tau(\langle i_z^h , \tb_\alpha^\perp \rangle) \tfrac{\sqrt{3}}{2} \tb_\alpha^\perp \, ,\\
            j_z^{h+1} &:= j_z^h + \tb_\alpha + \tfrac{1}{2} \tb_\alpha + \tau(\langle j_z^h , \tb_\alpha^\perp \rangle) \tfrac{\sqrt{3}}{2} \tb_\alpha^\perp \, ,\\
            k_z^{h+1} &:= k_z^h + \tb_\alpha + \tfrac{1}{2} \tb_\alpha + \tau(\langle k_z^h , \tb_\alpha^\perp \rangle) \tfrac{\sqrt{3}}{2} \tb_\alpha^\perp \, , \\
            T_z^{h+1} & := \conv\{\e i_z^{h+1},\e j_z^{h+1}, \e k_z^{h+1}\} \subset \Sigma^{\alpha,z}_\e \, ,
        \end{split}
    \end{equation}
    where $\tau(0) := 1$, $\tau(\tfrac{\sqrt{3}}{2}) := -1$. As in~\eqref{def:lattice slice}, we define the half-slice $\Sigma^{\alpha,z}_\e(T_z)$ of the lattice $\L_\e$ starting from $T_z$ by $\Sigma^{\alpha,z}_\e(T_z) \defas\conv\{T_z^h \colon h \in \N \}$.

    \noindent {\em Number of interpolation steps}. The number of interpolation steps will be defined by finding the first shifted triangle $T_{z_0}^{2 h}$ in the half-slice~$\Sigma^{\alpha,z_0}_\e(T_{z_0})$ that is well contained in $R^\nu_{\infty,1-\delta} \sm \overline R^{\nu}_{\infty, 1-2\delta}$. Specifically, we define 
    \begin{equation*}
       N_\e := \min \{ 2h  \colon h \in \N \, , \ T_{z_0}^{2 h} \subset R^\nu_{\infty, 1-5\delta/4} \sm \overline R^{\nu}_{\infty, 1-7 \delta/4} \} \, .
    \end{equation*}  
     Given another $z \in \mathcal{Z}^{\mathrm{top}}_\e$, we have that 
     \begin{equation} \label{eq:far triangle}
        T_{z}^{N_\e} \subset R^\nu_{\infty,1- \delta } \sm \overline R^{\nu}_{\infty, 1-2\delta } \, .
     \end{equation}
     Indeed, let $y = y_0 + 3\e \frac{N_\e}{2}  \tb_\alpha \in T_{z}^{N_\e}$ with $y_0 \in T_z$. Let $x_0 \in T_{z_0} \cap L_\e$, \cf~\eqref{eq:properties chain}, and let $x:= x_0+3\e \frac{N_\e}{2}  \tb_\alpha \in T_{z_0}^{N_\e}$. Since $y_0 \in L_\e + B_{3\e}(0)$, we have that $|\langle y_0-x_0,\nu \rangle| < 3 \e$ and thus $|\langle y-x,\nu \rangle| < 3 \e$, \ie $y$ belongs to the $3 \e$-neighborhood of $R^\nu_{\infty, 1-5\delta/4} \sm \overline R^{\nu}_{\infty, 1-7 \delta/4}$, which is contained in $R^\nu_{\infty,1- \delta } \sm \overline R^{\nu}_{\infty, 1-2\delta }$.   

    Observe that 
    \begin{equation} \label{est:N}
        N_\e \leq \frac{C_3}{\e}
    \end{equation}
    for some positive constant $C_3$. To prove this, let $x_0 \in T_{z_0}$ and $x := x_0+3\e \frac{N_\e}{2}  \tb_\alpha \in T_{z_0}^{N_\e}$. The segment $[x_0;x]$ is fully contained in $R^\nu_{\infty,1-\delta} \sm \overline R^\nu_{\infty,1-3\delta}$ and thus $\delta  \geq |\langle x-x_0 , \nu \rangle| =  3\e \frac{N_\e}{2} \langle \tb_\alpha , \nu \rangle  \geq 3\e \frac{N_\e}{2} \frac{\sqrt{3}}{2}$.

    \noindent {\em Winding number}. We choose $m_\e$ given by~\eqref{def:winding number}. We consider the angles $\theta_\e(\e i_{z})$, $\theta_\e(\e j_{z})$, $\theta_\e(\e k_{z})$ introduced in Step~3. By~\eqref{def:winding number} and \eqref{est:maximal m} we infer that 
    \begin{equation*}
        2 \pi m_\e  \geq 2 \pi C_2 \sqrt{\frac{\sigma_\e}{\e}} + 16 \pi \geq 2 \pi |\theta_\e(\e i_z) - \theta_\e(\e i_{z_0})| + 16 \pi \geq 2 \pi |\theta_\e(\e i_z)| + 2 \pi \, ,
    \end{equation*} 
    hence~\eqref{assumption:m} is satisfied.

    \noindent {\em Checking the assumptions on the angles}. We check that the assumptions~\eqref{assumption:angles} are satisfied. First, we claim that for $\e$ small enough the configuration $u_\e$ has positive chirality in every triangle $T \in \mathcal{T}_\e(\R^2)$ contained in $S_\e^{\mathrm{top}}$. To prove it, let us start by showing that the sign of the chirality is constant arguing by contradiction. Assume that there exist two triangles $T',T'' \subset S_\e^{\mathrm{top}}$ with a common side such that $\chi(u_\e) \leq 0$ in $T'$ and $\chi(u_\e) \geq 0$ in $T''$. Then by~\eqref{est:energy on slice} and Lemma~\ref{lemma:energy of opposite chirality} we would get 
    \begin{equation*}
        \e \sigma_\e \geq F_\e(u_\e,S_\e) \geq F_\e(u_\e, T' \cup T'') \geq  \frac{5}{3} \e   \, ,  
    \end{equation*}
    which contradicts the condition $\sigma_\e \to 0$. Therefore $\chi(u_\e)$ has constant sign in $S_\e^{\mathrm{top}}$. In fact, $\chi(u_\e) > 0$ in $S_\e^{\mathrm{top}}$. If instead $\chi(u_\e) \leq 0$ in $S_\e^{\mathrm{top}}$, by~\eqref{est:energy on slice} we would have that  
    \begin{equation*}
       \e \sigma_\e \geq \|\chi(u_\e) - \chi_\nu\|_{L^1(S_\e^{\mathrm{top}})} = \int_{S_\e^{\mathrm{top}}} (1 - \chi(u_\e) ) \d x \geq |S_\e^{\mathrm{top}}| = 6 \e r_\e  \geq 6 \e \big(\tfrac{1}{2}-\tfrac{3}{2}\delta\big) \, ,
    \end{equation*}
    which contradicts $\sigma_\e \to 0$.  In conclusion, $\chi(u_\e) > 0$ in $S_\e^{\mathrm{top}}$. 

    Let now $z \in \mathcal{Z}^{\mathrm{top}}_\e$. We have
    \begin{equation*}
       |u_\e(\e i_z) + u_\e(\e j_z) + u_\e(\e k_z)|^2 = \frac{1}{\e}F_\e(u_\e,T_z) \leq  \frac{1}{\e} F_\e(u_\e,S_\e) \leq \sigma_\e \,.
    \end{equation*}
    Since $\chi(u_\e) > 0$ in $T_z$, for $\e$ small enough $u_\e$ is close to a ground state with chirality 1 and therefore, using~\eqref{F:angularlift} and Lemma~\ref{lem:chirality} (see also~\eqref{eq:wells}),
    \begin{equation} \label{assumption:angles eps}
        \Big|\theta_\e(\e j_z) - \theta_\e(\e i_z) - \frac{2\pi}{3}\Big| \leq \frac{1}{4} \, , \quad \Big|\theta_\e(\e k_z) - \theta_\e(\e j_z)- \frac{2\pi}{3}\Big| \leq \frac{1}{4} \, .
    \end{equation}


    \noindent {\em Definition of the interpolation}. We are in a position to define the interpolation. We reproduce the one-dimensional construction of Lemma~\ref{lemma:1d interpolation} by suitably translating and scaling it, providing the precise notation as it will be useful for later estimates. We shall define the interpolation only on slices starting from every other triangle~$T_z$, for the constructions on two slices $\Sigma^{\alpha,z}_\e$ and $\Sigma^{\alpha,z+2}_\e$ completely determine the values of the modified spin configuration in $\Sigma^{\alpha,z+1}_\e$. For this reason, let $z \in \mathcal{Z}^{\mathrm{top}}_\e$ be such that $z \equiv z_0 \!\! \mod 2$. We then define the interpolated angles $\theta(\e i_z^h)$, $\theta(\e j_z^h)$, $\theta(\e k_z^h)$ for $h=0,\dots,N_\e$ as in~\eqref{def:interpolated angles} by (recall that $i_z^0 = i_z$, $j_z^0 = j_z$, $k_z^0 = k_z$)
    \begin{equation} \label{def:interpolated angles eps}
        \begin{split}
            \theta_\e(\e i_z^h) & \defas\theta_\e(\e i_z)+h\frac{2\pi m_\e-\theta_\e(\e i_z)}{N_\e} = \Big(1 - \frac{h}{N_\e}\Big)\theta_\e(\e i_z) + \frac{h}{N_\e}2\pi m_\e  \, , \\
            \theta_\e(\e j_z^h) & \defas\theta_\e(\e j_z)+h\frac{2\pi m_\e+\frac{2\pi}{3}-\theta_\e(\e j_z)}{N_\e} = \Big(1 - \frac{h}{N_\e}\Big)\theta_\e(\e j_z) + \frac{h}{N_\e}2\pi m_\e + \frac{h}{N_\e}\frac{2\pi}{3} \, , \\
            \theta_\e(\e k_z^h) & \defas\theta_\e(\e k_z)+h\frac{2\pi m_\e+\frac{4\pi}{3}-\theta_\e(\e k_z)}{N_\e} =  \Big(1 - \frac{h}{N_\e}\Big)\theta_\e(\e k_z) + \frac{h}{N_\e}2\pi m_\e + \frac{h}{N_\e}\frac{4\pi}{3}\, , \\
        \end{split}
    \end{equation}
    and $\theta_\e(\e i_z^h) := 2\pi m_\e$, $\theta_\e(\e j_z^h) := 2\pi m_\e+\frac{2\pi}{3}$, $\theta_\e(\e k_z^h) := 2\pi m_\e+\frac{4\pi}{3}$ for $h \geq N_\e+1$. 
    Eventually, we define $u_\e^\mathrm{top}\colon \L_\e \cap\Sigma^{\alpha,z}_\e(T_z) \to\S^1$ by setting
    \begin{equation*} 
        u_\e^\mathrm{top}(\e i_z^h) \defas\exp(\iota \theta(\e i_z^h))\, ,\quad u_\e^\mathrm{top} (\e j_z^h)\defas\exp(\iota \theta(\e j_z^h))\, ,\quad u_\e^\mathrm{top} (\e k_z^h)\defas\exp(\iota \theta(\e k_z^h))\, .
    \end{equation*}
    By~\eqref{eq:far triangle} we have that 
    \begin{equation} \label{eq:attained boundary conditions}
        u_\e^\mathrm{top}|_T = u_\e^\pos|_T \quad \text{if } T \subset \Sigma^{\alpha,z}_\e(T_z)  \sm \overline R^\nu_{\infty,1-\delta}  \, .
    \end{equation}

    \noindent {\em Estimate on ``even'' slices.}    We observe that the construction of $u_\e^\mathrm{top}$ is simply a translation and a scaling of the construction in Lemma~\ref{lemma:1d interpolation}. As the assumption~\eqref{assumption:angles} is satisfied, \cf~\eqref{assumption:angles eps}, we can apply Lemma~\ref{lemma:1d interpolation} to deduce that 
    \begin{equation} \label{est:on even}
        F_\e(u^\mathrm{top}_\e, \Sigma^{\alpha,z}_\e(T_z)) \leq C \Big(N_\e F_\e(u_\e,T_z)+ \e \frac{m_\e^2}{N_\e}\Big)  \, .
    \end{equation}

    \noindent {\em Estimate on ``odd'' slices.} We estimate the energy on the missing half-slices. Let us fix $z, z+1, z+2 \in \mathcal{Z}^{\mathrm{top}}_\e$ with $z \equiv z_0 \! \mod 2$. Let $T$ be a triangle contained in $\Sigma^{\alpha,z+1}_\e(T_{z+1})$. Then $T$ shares two vertices with one triangle contained in $\Sigma^{\alpha,z}_\e(T_{z})$ or with one triangle contained in $\Sigma^{\alpha,z+2}_\e(T_{z+2})$. Let us assume, without loss of generality, that the two shared vertices are the vertices $\e j'\in \L^2_\e$ and $\e k' \in \L^3_\e$ of some triangle $T' \subset \Sigma^{\alpha,z}_\e(T_{z})$. The third vertex of $T'$ is of the type $\e i_z^{h'} \in \L^1_\e$ for some $h' \in \N$.  Moreover, the third vertex of~$T$ is shared with a triangle $T_{z+2}^{h}$, $h \in \N$, and is of the type $\e i_{z+2}^{h}\in \L^3_\e$\EEE. We remark that $|h'-h| \leq 2$. Indeed, by~\eqref{def:vertices in slice eps} we have that 
    \begin{align*}
        i_z^{h'} & = i_z + h' \tfrac{3}{2} \tb_\alpha \pm \tfrac{\sqrt{3}}{2} \tb_\alpha^\perp \, , \\
        i_{z+2}^{h} & = i_{z+2} + h \tfrac{3}{2} \tb_\alpha \pm \tfrac{\sqrt{3}}{2} \tb_\alpha^\perp  \, .
    \end{align*} 
From the assumptions on the position of the two triangles together with the definition of $i_z^{h'},  i_{z+2}^{h}$ it follows that
    \begin{equation*}
        0 = \langle i_z^{h'} - i_{z+2}^{h} , \tb_\alpha \rangle = \langle i_z - i_{z+2} , \tb_\alpha \rangle + (h'-h)\tfrac{3}{2}  \quad \implies \quad |h' - h| = \tfrac{2}{3} |\langle i_z - i_{z+2} , \tb_\alpha \rangle| \leq 2 \, ,
    \end{equation*}
    where in the last inequality we used the fact that $T_z \cap T_{z+1} \neq \emptyset$ and $T_{z+1} \cap T_{z+2} \neq \emptyset$.

    We estimate the energy in the triangle $T$ by
    \begin{equation*}
        \begin{split} 
            F_\e(u^\mathrm{top}_\e, T) & = \e |u^\mathrm{top}_\e(\e i_{z+2}^{h}) + u^\mathrm{top}_\e(\e j') + u^\mathrm{top}_\e(\e k')|^2 \\
            & \leq 2 \e  |u^\mathrm{top}_\e(\e i_{z}^{h'}) + u^\mathrm{top}_\e(\e j') + u^\mathrm{top}_\e(\e k')|^2 + 2 \e |u^\mathrm{top}_\e(\e i_{z}^{h'}) - u^\mathrm{top}_\e(\e i_{z+2}^{h})|^2 \\
            & = 2 F_\e(u^\mathrm{top}_\e, T') +  2 \e |u^\mathrm{top}_\e(\e i_{z}^{h'}) - u^\mathrm{top}_\e(\e i_{z+2}^{h})|^2 \, .
        \end{split}
    \end{equation*}
    Note that~\eqref{est:maximal m} and \eqref{def:winding number} imply
    \begin{equation*}
        \begin{split}
            |\theta_\e(\e i_{z+2})| & \leq |\theta_\e(\e i_{z+2}) - \theta_\e(\e i_{z_0}) | + | \theta_\e(\e i_{z_0})|  \leq |\theta_\e(\e i_{z+2}) - \theta_\e(\e i_{z_0}) | + 2\pi \leq 2\pi m_\e   \, .
        \end{split}
    \end{equation*}
    From~\eqref{def:interpolated angles eps}, from the previous estimate, and since $|h'-h|\leq 2$ it follows that 
    \begin{equation*}
        \begin{split}
            |u^\mathrm{top}_\e(\e i_{z}^{h}) - u^\mathrm{top}_\e(\e i_{z+2}^{h'})|^2 & = 2 - 2 \cos(\theta_\e(\e i_z^h) - \theta_\e(\e i_{z+2}^{h'})) \leq \big| \theta_\e(\e i_z^h) - \theta_\e(\e i_{z+2}^{h'}) \big|^2 \\
            &   = \Big|\Big(1 - \frac{h}{N_\e}\Big)\big( \theta_\e(\e i_z) - \theta_\e(\e i_{z+2})\big) + \frac{h-h'}{N_\e}\Big( 2\pi m_\e - \theta_\e(\e i_{z+2})\Big)  \Big|^{ 2} \\
            &   \leq 2 \big| \theta_\e(\e i_z) - \theta_\e(\e i_{z+2})\big|^2 + 2 \Big|\frac{h-h'}{N_\e}\Big|^2 \Big|  2\pi m_\e - \theta_\e(\e i_{z+2})  \Big|^{ 2} \\
            &  \leq 2 \big| \theta_\e(\e i_z) - \theta_\e(\e i_{z+2})\big|^2 + C \frac{m_\e^2}{N_\e^2} \, \\
        \end{split}
    \end{equation*}
    It remains to estimate $\big| \theta_\e(\e i_z) - \theta_\e(\e i_{z+2})\big|^2$. Using the fact that $1 - \cos(\phi) \geq \frac{1}{12} \phi^2$ for every $|\phi| \leq \pi$ and by~\eqref{est:with hexagon} we obtain that 
   \begin{equation*}
    \begin{split}
        \big| \theta_\e(\e i_z)  - \theta_\e(\e i_{z+2})\big|^2 & \leq 2 \big| \theta_\e(\e i_z)  - \theta_\e(\e i_{z+1})\big|^2 + 2 \big| \theta_\e(\e i_{z+1})  - \theta_\e(\e i_{z+2})\big|^2 \\
        & \leq C \big| u_\e(\e i_z)  - u_\e(\e i_{z+1})\big|^2 + C \big| u_\e(\e i_{z+1})  - u_\e(\e i_{z+2})\big|^2 \\
        & \leq \frac{C}{\e} \big(F_\e(u_\e, H_z) + F_\e(u_\e, H_{z+1}) \big) \, ,
    \end{split}
   \end{equation*}
   where $H_z$ is an hexagon containing $T_z$ and $T_{z+1}$ and $H_{z+1}$ is an hexagon containing $T_{z+1}$ and $T_{z+2}$.
   In conclusion, we have that 
   \begin{equation*}
    F_\e(u^\mathrm{top}_\e, T) \leq C \Big( F_\e(u^\mathrm{top}_\e, T') + F_\e(u_\e, H_z) + F_\e(u_\e, H_{z+1}) +  \e \frac{m_\e^2}{N_\e^2} \Big) \, .
   \end{equation*}
    Summing over all triangles in $\Sigma^{\alpha,z}_\e(T_{z+1})$ (their number is $C N_\e$) we deduce that 
    \begin{equation} \label{est:on odd}
        \begin{split}
            & F_\e(u^\mathrm{top}_\e, \Sigma^{\alpha,z+1}_\e(T_{z+1})) \\
            & \quad \leq C \Big( F_\e(u^\mathrm{top}_\e, \Sigma^{\alpha,z}_\e(T_{z})) +  F_\e(u^\mathrm{top}_\e, \Sigma^{\alpha,z+2}_\e(T_{z+2}))  +  N_\e F_\e(u_\e, H_z) + N_\e F_\e(u_\e, H_{z+1}) +  \e \frac{m_\e^2}{N_\e} \Big) \, .
        \end{split}
    \end{equation}

    \noindent {\em Final estimate on top part.} By~\eqref{est:on odd}, \eqref{est:on even}, summing over $z$ and by \eqref{est:energy on slice}, \eqref{est:number of triangles}, \eqref{def:winding number}, and~\eqref{est:N} we conclude that\footnote{In this estimate it becomes evident that it was crucial to prove that the energy concentrates close to the interface. A classical averaging/slicing argument would only provide a bound on the strip $S_\e$ of the type $F_\e(u_\e,S_\e) \leq C \e$. This would not suffice to conclude that the modified sequence does not increase the energy, as the right-hand side in this estimate would end up to be a constant.}
    \begin{equation} \label{est:on top part}
        \begin{split}
            \sum_{z \in \mathcal{Z}^{\mathrm{top}}_\e} \hspace{-0.5em} F_\e(u^\mathrm{top}_\e, \Sigma^{\alpha,z}_\e(T_z)) & \leq \hspace{-0.8em} \sum_{\substack{z \in \mathcal{Z}^{\mathrm{top}}_\e \\ z \equiv z_0 \!\!\!\!\! \mod 2}} \hspace{-0.8em} C \Big(F_\e(u^\mathrm{top}_\e, \Sigma^{\alpha,z}_\e(T_z))+  N_\e F_\e(u_\e, H_z) + N_\e F_\e(u_\e, H_{z+1}) + \e \frac{m_\e^2}{N_\e}\Big) \\
            & \leq \hspace{-0.8em} \sum_{\substack{z \in \mathcal{Z}^{\mathrm{top}}_\e \\ z \equiv z_0 \!\!\!\!\! \mod 2}} \hspace{-0.8em} C \Big(N_\e F_\e(u_\e,T_z)+  N_\e F_\e(u_\e, H_z) + N_\e F_\e(u_\e, H_{z+1}) + \e \frac{m_\e^2}{N_\e}\Big) \\
            & \leq C N_\e F_\e(u_\e,S_\e) +  C \e \frac{m_\e^2}{N_\e} \#\mathcal{Z}^{\mathrm{top}}_\e  \leq C \frac{C_3}{\e} \e \sigma_\e + C \e \Big(C_2\sqrt{\frac{\sigma_\e}{\e}} + 4\Big)^{\!2} \frac{\e}{C_3}  \frac{C_1}{\e}  \\
            & \leq C\left(\sigma_\e +\e \right) \, .
        \end{split}
    \end{equation}
    \end{step}

    \begin{step}{5} (Definition of modification in remaining parts of the square). The modification starting from $S_\e^\mathrm{left}$ and $S_\e^\mathrm{right}$ is completely analogous. We recall that $\beta \in \{1,2,3\}$ is such that $|\langle \tb_\beta , \nu^\perp \rangle | \geq \frac{\sqrt{3}}{2}$. We consider chains of triangles contained in $S_\e^\mathrm{left}$ and $S_\e^\mathrm{right}$ given by Lemma~\ref{lemma:chain of triangles} (suitably adapted). In half-slices in the direction $\tb_\beta$ starting from triangles of these chains and approaching the boundary $\partial Q^\nu$, we define $u_\e^\mathrm{left}$ and $u_\e^\mathrm{right}$ as in Step 4. 

    \begin{figure}[H]
       \includegraphics{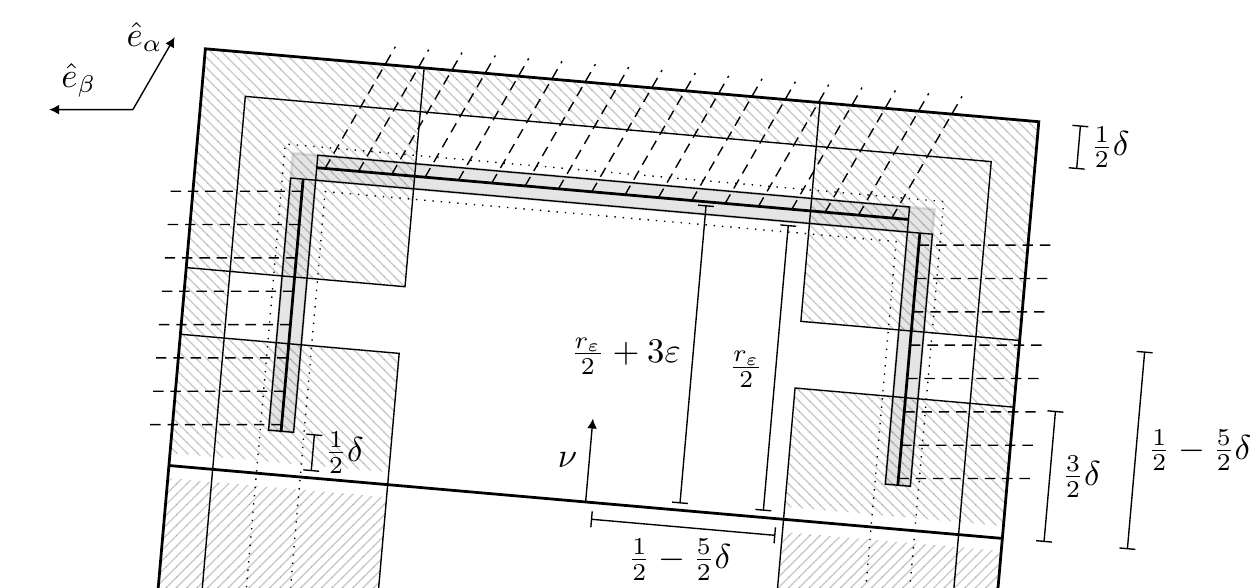} 
            \caption{Definition of $\tilde u_\e$ in $Q^\nu_+$: in the hatched regions it is equal to $u^\pos$; in the white region enclosed by $Q^\nu_{r_\e+6\e}$ it is equal to $u_\e$; outside of $Q^\nu_{r_\e}$ it is defined through the interpolation $u^\mathrm{top}_\e$, $u^\mathrm{left}_\e$, and $u^\mathrm{right}_\e$ constructed with the slices in the lattice directions $\tb_\alpha$ and~$\tb_\beta$.}
            \label{fig:global construction}
    \end{figure}

        We are finally in a position to define $\tilde u_\e$ in $Q^\nu_+$. We fix $\delta \in (0,\tfrac{1}{8})$ and we consider the two-barred cross-shaped set (the white region in Figure~\ref{fig:global construction})
        \begin{equation*}
            P_\delta :=    R^\nu_{1-5\delta,1} \cup (R^\nu_{1,1-5\delta} \sm \overline R^\nu_{1,3\delta})    \, .
        \end{equation*}
        Given $T \in \mathcal{T}_\e(\R^2)$ such that $T \subset Q^\nu$, we distinguish some cases.

        \noindent{\em Case} $T \subset P_\delta \cap  Q^\nu_{r_\e+6\e}$: We set 
        \begin{equation} \label{def:tilde u 1}
		\text{if } T \subset P_\delta \cap Q^\nu_{r_\e+6\e}: \,  \quad\tilde u_\e|_T := u_\e|_T 
        \end{equation}

        \noindent{\em Case} $T \subset R^\nu_{1-5\delta,1} \sm \overline Q^\nu_{r_\e}$ (part of the cross-shaped set $P_\delta$ aligned with $\nu$): We give the definition in the case $T \subset Q^\nu_+$ (the case $T \subset Q^\nu_-$ being analogous). Let $y_0 \in T$. Let us consider the slice~$\Sigma^{\alpha,z}_\e$ such that $T \subset \Sigma^{\alpha,z}_\e$ and let us show that $z \in \mathcal{Z}^{\mathrm{top}}_\e$. Let $x \in T_z$ and first of all note that $x \in L_\e + B_{3 \e}(0)$. Since  $T$ and $T_z$ are contained in the same slice, by definition of $\Sigma^{\alpha,z}_\e$ we can find $s \in \R$ such that $x_0 := y_0 + s \tb_\alpha \in T_z$. Since $y_0 \notin \overline Q^\nu_{r_\e}$, the segment $[x_0;y_0]$ is contained in $R^\nu_{\infty,1} \sm  \overline R^\nu_{\infty,r_\e}$, thus $|s| \frac{\sqrt{3}}{2} \leq |s| |\langle \tb_\alpha, \nu \rangle|=|\langle x_0-y_0,\nu\rangle| \leq \tfrac{1}{2}(1- r_\e)$, \ie $|s| \leq \frac{1}{\sqrt{3}} (1-r_\e) <  \sqrt{3} \delta$. Then, using that $|\langle \tb_\alpha, \nu^\perp \rangle | \leq \frac{1}{2}$,
        \begin{equation*}
            \begin{split}
                |\langle x, \nu^\perp \rangle | & \leq |\langle x - x_0, \nu^\perp \rangle | + |\langle x_0 - y_0, \nu^\perp \rangle | + |\langle y_0, \nu^\perp \rangle |  \\
                & \leq \e + |s| |\langle \tb_\alpha, \nu^\perp \rangle | + \tfrac{1}{2}-\tfrac{5}{2}\delta < \e + \tfrac{\sqrt{3}}{2} \delta + \tfrac{1}{2}-\tfrac{5}{2}\delta < \tfrac{1}{2} - \tfrac{3}{2}\delta < \tfrac{r_\e}{2}  \, ,   
            \end{split}
        \end{equation*} 
        \ie $x \in R^\nu_{r_\e,1}$ and hence $x \in (L_\e + B_{3 \e}(0)) \cap R^\nu_{r_\e,1} = S_\e^\mathrm{top}$. We set
        \begin{equation} \label{def:tilde u 3}
            \text{if } T \subset R^\nu_{1-5\delta,1} \sm \overline Q^\nu_{r_\e} : \quad \tilde u_\e|_T :=  \begin{cases}
                u_\e^\mathrm{top}|_T & \text{if } T \subset \Sigma^{\alpha,z}_\e(T_{z}) \, , \\
                u_\e|_T & \text{otherwise.}
            \end{cases}
        \end{equation} 
        The definition is consistent with the previous case: if $T \subset Q^\nu_{r_\e + 6\e} \sm \overline Q^\nu_{r_\e}$, then $T$ is not contained in any half-slice $\Sigma^{\alpha,z}_\e(T_{z})$ (because $T_z \cap \partial  Q^\nu_{r_\e + 6\e} \neq \emptyset$) and thus $\tilde u_\e|_T = u_\e|_T$, in accordance with~\eqref{def:tilde u 1}. If $T \subset R^\nu_{1-5\delta,1} \sm \overline Q^\nu_{r_\e}$ but $T$ is not contained in any half-slice $\Sigma^{\alpha,z}_\e(T_{z})$, then $T \subset S_\e$. In particular, by~\eqref{est:on top part} and~\eqref{est:energy on slice} we infer that 
        \begin{equation} \label{est:on vertical part of P}
            F_\e(\tilde u_\e, R^\nu_{1-5\delta,1} \sm \overline Q^\nu_{r_\e})  \leq \sum_{z \in \mathcal{Z}^{\mathrm{top}}_\e} \hspace{-0.5em} F_\e(u^\mathrm{top}_\e, \Sigma^{\alpha,z}_\e(T_z))  + F_\e(u_\e, S_\e) \leq C (\sigma_\e + \e) \, .
        \end{equation}

        \noindent{\em Case} $T \subset (R^\nu_{1,1-5\delta} \sm \overline R^\nu_{1,3\delta}) \sm \overline Q^\nu_{r_\e}$ (part of the cross-shaped set $P_\delta$  aligned with $\nu^\perp$): As in the previous case, assuming $T \subset Q^\nu_+$, we define $\tilde u_\e|_T := u_\e^\mathrm{left}|_T$ if $T$ is contained in a half-slice starting from a triangle in $S_\e^\mathrm{left}$, $\tilde u_\e|_T := u_\e^\mathrm{right}|_T$ if $T$ is contained in a half-slice starting from a triangle in $S_\e^\mathrm{right}$, and $\tilde u_\e|_T := u_\e|_T$ otherwise. As before, the definition is compatible with~\eqref{def:tilde u 1}. Similarly to~\eqref{est:on vertical part of P} we obtain that 
        \begin{equation}\label{est:on horizontal part of P}
            F_\e(\tilde u_\e, (R^\nu_{1,1-5\delta} \sm \overline R^\nu_{1,3\delta}) \sm \overline Q^\nu_{r_\e})   \leq C (\sigma_\e +\e)\, .
        \end{equation}

        \noindent{\em Case} $T \cap (\R^2 \sm  P_\delta) \neq \emptyset$: let $x$ be a vertex of $T$ and assume that $x$ is not the vertex of a triangle~$T'$ covered by the previous cases. Then we set  $\tilde u_\e(x):=u_\e^\pos(x)$ if $\langle x, \nu \rangle \geq 0$ and $\tilde u_\e(x):=u_\e^\neg(x)$ if $\langle x, \nu \rangle < 0$. In particular, 
        \begin{equation}\label{def:tilde u 4} 
            \begin{aligned}
                \text{if } T \subset Q^\nu_+ \sm  \overline P_\delta : \quad &\tilde u_\e|_T = u_\e^\pos|_T \,,  \\
                \text{if } T \subset Q^\nu_- \sm  \overline P_\delta : \quad &\tilde u_\e|_T = u_\e^\neg|_T\, .  
            \end{aligned}
        \end{equation}
        We remark that 
        \begin{equation} \label{est:number of T on deP}
            \e^2 \#\{T \subset Q^\nu \colon T  \cap \partial P_\delta \neq \emptyset \}  \leq C \delta \e \quad  \implies \quad  \#\{T \subset Q^\nu \colon T  \cap \partial P_\delta \neq \emptyset \}  \leq C \frac{\delta}{\e}  
        \end{equation}
        and
        \begin{equation} \label{est:number of T on L}
            \# \{ T \subset Q^\nu \sm  \overline P_\delta  \colon  T \cap L^\nu \neq \emptyset \} \leq C \frac{\delta}{\e} \, .
        \end{equation}    
        
        Let us check that $\tilde u_\e$ attains the desired boundary conditions~\eqref{condition:bc}. Let $T \subset Q^\nu_+ \sm \overline Q^\nu_{1-\delta}$. If $T \subset  (Q^\nu_+ \sm \overline Q^\nu_{1-\delta}) \cap R^\nu_{1-5\delta,1}$ (and similarly if $T \subset  (Q^\nu_+ \sm \overline Q^\nu_{1-\delta}) \cap (R^\nu_{1,1-5\delta} \sm \overline R^\nu_{1,3\delta})$), then we are in the case covered by~\eqref{def:tilde u 3}. By~\eqref{eq:attained boundary conditions} we have $\tilde u_\e|_T = u_\e^\mathrm{top}|_T = u_\e^\pos|_T$. Otherwise, if $T \cap (\R^2 \sm P_\delta) \neq \emptyset$, let $x$ be a vertex of $T$ and assume that $x$ is not the vertex of a triangle $T'$ covered by the previous cases. Then, by definition, $\tilde u_\e(x):=u_\e^\pos(x)$.  We argue analogously if $T \subset Q^\nu_- \sm \overline Q^\nu_{1-\delta}$. Finally, if $T \cap L^\nu \neq \emptyset$, then $T \subset L^\nu + B_{2\e}(0)$ and thus it is not relevant for the boundary conditions by the definition of discrete boundary $\partial_\e^\pm Q^\nu$. 
    
    \end{step}

    \begin{step}{6} (Energy estimate). By~\eqref{est:number of T on deP}, \eqref{def:tilde u 4}, and~\eqref{est:number of T on L} we have that
        \begin{equation*}
            \begin{split}
                F_\e(\tilde u_\e, Q^\nu) & \leq F_\e(\tilde u_\e, P_\delta) + F_\e(\tilde u_\e, Q^\nu \sm \overline P_\delta) + \sum_{T  \cap \partial P_\delta \neq \emptyset} F_\e(\tilde u_\e, T) \\
                & \leq F_\e(\tilde u_\e, P_\delta) + F_\e(u_\e^\pos, Q^\nu_+ \sm \overline P_\delta) + F_\e(u_\e^\neg, Q^\nu_- \sm \overline P_\delta)  + \sum_{\substack{T \subset Q^\nu \sm  \overline P_\delta  \\ T \cap L^\nu \neq \emptyset}} F_\e(\tilde u_\e, T) + C\delta \\
                & \leq F_\e(\tilde u_\e, P_\delta) + C \delta  \, .
            \end{split}
        \end{equation*}
        Moreover, by~\eqref{def:tilde u 1}, \eqref{est:on vertical part of P}, and~\eqref{est:on horizontal part of P} we deduce that 
        \begin{equation*}
            \begin{split}
                F_\e(\tilde u_\e, P_\delta) & \leq F_\e(u_\e, P_\delta \cap  Q^\nu_{r_\e+6\e}) + F_\e(\tilde u_\e, R^\nu_{1-5\delta,1} \sm \overline Q^\nu_{r_\e}) + F_\e(\tilde u_\e, (R^\nu_{1,1-5\delta} \sm \overline R^\nu_{1,3\delta}) \sm \overline Q^\nu_{r_\e}) \\
                & \leq F_\e(u_\e,   Q^\nu ) + C  \left( \sigma_\e +\e\right) \,  .
            \end{split}
        \end{equation*}
        In conclusion, 
        \begin{equation*}
            \limsup_{\e \to 0} F_\e(\tilde u_\e, Q^\nu)  \leq \lim_{\e \to 0} F_\e(u_\e,   Q^\nu ) + C \delta \, . 
        \end{equation*}
        Eventually, letting $\delta \to 0$ and with a diagonal argument, we construct a sequence which satisfies~\eqref{condition:energy estimate}.
    \end{step}

\end{proof}

In the proof of Proposition~\ref{prop:psi is phi} we applied the following lemma. 

\begin{lemma} \label{lemma:chain of triangles}
    Let $\Sigma^{\alpha,z}_\e$ be the slices of the triangular lattice defined in~\eqref{def:slice}. Let $L$ be a line in~$\R^2$ orthogonal to~$\nu$ and assume that $|\langle \tb_\alpha , \nu^\perp \rangle | \leq \frac{1}{2}$. Then there exists a chain of triangles $(T_z)_{z \in \Z}$ satisfying for every $z \in \Z$
    \begin{equation} \label{eq:chain}
        T_z \in \T^+_\e(\R^2) \,,  \quad  T_z \subset \Sigma^{\alpha,z}_\e , \quad  T_z \cap L \neq \emptyset \, , \quad  T_z \cap T_{z+1}\neq \emptyset \, .
    \end{equation}
\end{lemma}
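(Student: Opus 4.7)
The condition $|\langle \tb_\alpha, \nu^\perp\rangle|\leq\tfrac{1}{2}$ gives $|\langle \tb_\alpha^\perp, \nu^\perp\rangle|\geq\tfrac{\sqrt{3}}{2}$, so $L$ (which has direction $\nu^\perp$) is transversal to every slice boundary $\{\langle x,\tb_\alpha^\perp\rangle = \tfrac{\sqrt{3}}{2}\e z\}$; hence $L$ crosses each slice $\Sigma^{\alpha,z}_\e$ in a non-degenerate segment meeting both horizontal boundaries in exactly one point each. Up to replacing $\tb_\alpha$ by $-\tb_\alpha$ (which leaves the hypothesis and the conclusion unchanged, the collection of slices being simply reindexed), we may assume that all triangles in $\T^+_\e(\R^2)$ contained in $\Sigma^{\alpha,z}_\e$ have their base on the lower boundary $\{\langle x,\tb_\alpha^\perp\rangle = \tfrac{\sqrt{3}}{2}\e z\}$ and their apex on the upper boundary, while triangles in $\T^-_\e(\R^2)$ have the opposite orientation. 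The two families alternate along each slice, and each consecutive pair shares a slanted edge.

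The plan is to define $T_z$ as the last triangle of $\T^+_\e(\R^2)$ in $\Sigma^{\alpha,z}_\e$ met by $L$ when $L$ is traversed in the direction in which $\langle\, \cdot\, , \tb_\alpha^\perp\rangle$ increases. Assume first that $L$ avoids every vertex of $\e\L$. Then $L$ enters $\Sigma^{\alpha,z}_\e$ through the relative interior of a lattice edge on the lower boundary, which must be the base of some triangle in $\T^+_\e(\R^2)$; $L$ then alternates through adjacent $\T^+_\e(\R^2)$- and $\T^-_\e(\R^2)$-triangles and finally exits through the relative interior of a lattice edge on the upper boundary, which is the base of some triangle $D_z\in\T^-_\e(\R^2)$. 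Let $T_z\in\T^+_\e(\R^2)$ be the triangle immediately preceding $D_z$ along this walk. By construction $T_z\subset\Sigma^{\alpha,z}_\e$ and $T_z\cap L\neq\emptyset$.

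The central step is the verification of $T_z\cap T_{z+1}\neq\emptyset$. The base of $D_z$ on the upper boundary of $\Sigma^{\alpha,z}_\e$ coincides with the base of the first $\T^+_\e(\R^2)$-triangle met by $L$ in $\Sigma^{\alpha,z+1}_\e$, namely $T_{z+1}$. The edge shared by $T_z$ and $D_z$ is a slanted edge connecting the apex of $D_z$ (on the lower boundary) to one of its two base vertices (on the upper boundary); the latter endpoint is, on the one hand, the apex of $T_z$, and on the other hand a base vertex of $T_{z+1}$, so it belongs to $T_z\cap T_{z+1}$. The main obstacle I foresee is the non-generic case in which $L$ contains vertices of $\e\L$. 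I would address it by a parallel translation of $L$ in the direction $\nu$ by an arbitrarily small amount, applying the generic construction to the translated line, and then extracting, via a diagonal argument over the countable family of slices, a chain still meeting the original $L$; some care is needed because the chain is indexed by $z\in\Z$, so the standard compactness arguments have to be performed slice by slice.
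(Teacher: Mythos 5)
Your global construction --- taking $T_z$ to be the \emph{last} $\T^+_\e(\R^2)$-triangle traversed by $L$ in each slice, and verifying adjacency via the shared exit edge --- is a genuinely different route from the paper's. The paper argues via a local inductive claim: given any admissible $T_z$, it exhibits the two explicit translates $T^\beta := T_z+\e\tau_\beta\tb_\beta$ and $T^\gamma := T_z+\e\tau_\gamma\tb_\gamma$, both in $\T^+_\e(\R^2)$, contained in $\Sigma^{\alpha,z+1}_\e$, and touching $T_z$, then shows via the intermediate value theorem that $L$ hits one of them (using the angle hypothesis to ensure that $L\cap s_z^\alpha\in T_z$). This local construction is robust to the degenerate case where $L$ passes through lattice vertices, so the translation/diagonal-extraction argument you sketch for that case is avoided entirely.

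There is, however, a gap in your adjacency step. You define $T_{z+1}$ to be the \emph{last} $\T^+_\e(\R^2)$-triangle that $L$ traverses in $\Sigma^{\alpha,z+1}_\e$, but when you prove $T_z\cap T_{z+1}\neq\emptyset$ you write ``the \emph{first} $\T^+_\e(\R^2)$-triangle met by $L$ in $\Sigma^{\alpha,z+1}_\e$, namely $T_{z+1}$'', which is the one whose base equals the base of $D_z$. These two triangles coincide only if $L$ traverses exactly one $\T^+$-triangle per slice, i.e.\ if the monotone walk through each slice has length exactly two. That fact does follow from $|\langle\tb_\alpha,\nu^\perp\rangle|\leq\frac{1}{2}$: the displacement along $\tb_\alpha$ incurred in crossing a slice of height $\frac{\sqrt{3}}{2}\e$ in the $\tb_\alpha^\perp$ direction is at most $\frac{\e}{2}$, which forces the exit edge on the upper boundary to be one of the two edges meeting at the apex of the $\T^+$-triangle of entry. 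But you never carry out this verification, and without it the argument fails: if $L$ crossed several $\T^+$-triangles in $\Sigma^{\alpha,z+1}_\e$, the last one (your $T_{z+1}$) need not touch $D_z$ at all, and hence could be disjoint from $T_z$. In your write-up the hypothesis on $\tb_\alpha$ is invoked only for transversality, whereas it is precisely what is needed for this quantitative step as well.
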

\begin{proof}

    It is enough to prove the following:

    \noindent {\em Claim}: Let $z \in \Z$ and let $T_z \in \mathcal{T}^+_\e(\R^2)$ be such that $T_z \subset \Sigma^{\alpha,z}_\e$ and $T_z \cap L \neq \emptyset$. Then there exists~$T_{z+1} \in \mathcal{T}^+_\e(\R^2)$ such that $T_{z+1} \subset \Sigma^{\alpha,z+1}_\e$, $T_{z+1} \cap L \neq \emptyset$, and $ T_z \cap T_{z+1}\neq \emptyset $. (The analogous statement with $\Sigma^{\alpha,z-1}_\e$ in place of $\Sigma^{\alpha,z+1}_\e$  holds true.) 
        
    \noindent With the proven claim at hand it is immediate to define a chain of triangles $(T_z)_{z \in \Z}$ which satisfies the properties in~\eqref{eq:chain} by initializing the construction from a triangle $T_{z_0} \in \T^+_\e(\R^2)$ which satisfies $T_{z_0} \cap L \neq \emptyset$ and $T_{z_0} \subset \Sigma^{\alpha,z_0}_\e$. Such a triangle always exists since the set $\R^2 \sm \bigcup_{T \in \T^+_\e(\R^2)} T$ is the union of disjoint open triangles, thus cannot contain $L$.

    To prove the claim let us denote be $\tb_\beta,\tb_\gamma$ the remaining two unit vectors   connecting points of $\L$ and   introduced in Section~\ref{subsec:lattice} and let us set $\tau_\beta\defas\sign\langle\tb_\beta,\tb_\alpha^\perp\rangle$, $\tau_\gamma\defas\sign\langle\tb_\gamma,\tb_\alpha^\perp\rangle$. For later use we observe that $\tau_\beta=\tau_\gamma$ if and only if $\alpha\neq 2$,  that is   if and only if $\langle\tb_\beta,\tb_\alpha\rangle=-\langle\tb_\gamma,\tb_\alpha\rangle$. In particular, we always have
    \begin{equation}\label{cond:sign}
    \tau_\beta\langle\tb_\beta,\tb_\alpha\rangle=-\tau_\gamma\langle\tb_\gamma,\tb_\alpha\rangle\quad\text{and}\quad\tau_\beta\langle\tb_\beta,\tb_\alpha\rangle\tau_\gamma\langle\tb_\gamma,\tb_\alpha\rangle=-1/4\, .
    \end{equation}

    Suppose now that $T_z \in \mathcal{T}^+_\e(\R^2)$, $T_z\subset\Sigma_\e^{\alpha,z}$ with $T_z\cap L\neq\emptyset$. The triangles $T^\beta\defas T_z+\e\tau_\beta\tb_\beta\in \mathcal{T}^+_\e(\R^2)$ and $T^\gamma\defas T_z+\e\tau_\gamma\tb_\gamma\in \mathcal{T}^+_\e(\R^2)$ satisfy $ T_z\cap T^\beta \neq \emptyset$ and $T_z\cap T^\gamma \neq \emptyset$. Moreover, they are contained in $\Sigma_\e^{\alpha,z+1}$. Indeed, for $x \in T_z$ we have $\langle \tb_\alpha^\perp, x + \e \tau_\beta  \tb_\beta \rangle = \langle \tb_\alpha^\perp, x \rangle + \e  \tau_\beta \langle \tb_\alpha^\perp, \tb_\beta \rangle = \langle \tb_\alpha^\perp, x \rangle + \e \frac{\sqrt{3}}{2} \in [\e \frac{\sqrt{3}}{2}  (z+1), \e \frac{\sqrt{3}}{2}  (z+2)]$, hence $T^\beta \subset \Sigma^{\alpha,z+1}_\e$ (analogously $T^\gamma \subset \Sigma^{\alpha,z+1}_\e$). \EEE

    \begin{figure}[H]
        \hspace{-2cm}
         \includegraphics{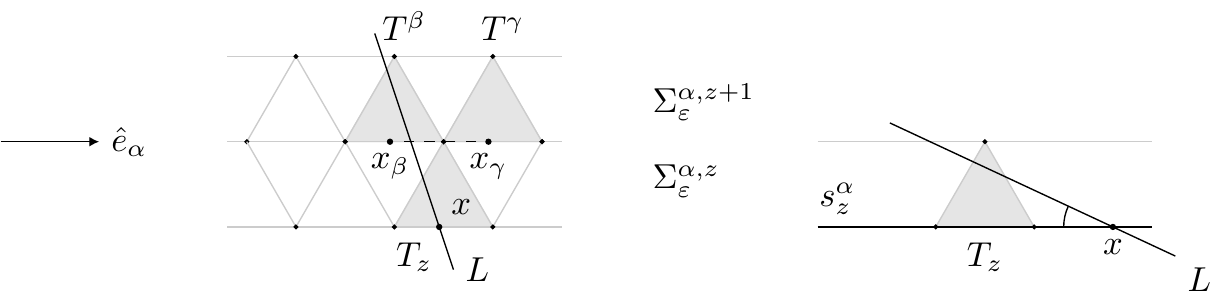}
        \caption{On the left: the triangles $T_z$, $T^\beta$ and $T^\gamma$, the line $L$, and the segment $[x_\beta;x_\gamma]$. On the right: if $x \notin T_z$, the angle between $L$ and $s_z^\alpha$ belongs to $[0,\frac{\pi}{3})$.}
        \label{fig:case2}
        \end{figure}

    The triangle $T_z$ has one side contained in $\partial \Sigma^{\alpha,z}_\e$, \ie either in $s_z^{\alpha} := \R \tb_\alpha + \e \frac{\sqrt{3}}{2} z \tb_\alpha^\perp$ or in $s_{z+1}^{\alpha} := \R \tb_\alpha + \e \frac{\sqrt{3}}{2} (z+1) \tb_\alpha^\perp$. Let us assume, without loss of generality, that the side is contained in $s_z^\alpha$. The line $L$ intersects $s_z^\alpha$ in a point $x$. We claim that $x \in T_z$. Indeed, if $x \notin T_z$, then the angle in $[0,\frac{\pi}{2}]$ between the lines $L$ and $s_z^\alpha$ belongs to $[0,\frac{\pi}{3})$, since $L$ intersects also $T_z$, see Figure~\ref{fig:case2}. Let us fix $y \in T_z \cap L \neq \emptyset$. Then we have $|\langle x-y, \tb_\alpha \rangle| > \frac{1}{2} |x-y|$. This contradicts the fact that $|\langle x-y, \tb_\alpha \rangle| = |x-y| |\langle \nu^\perp, \tb_\alpha \rangle| \leq \frac{1}{2} |x-y|$ since $|\langle \nu^\perp, \tb_\alpha \rangle| \leq \frac{1}{2}$. In conclusion $x \in T_z \cap s_z^\alpha$. Then $x_\beta\defas x+\e\tau_\beta\tb_\beta\in T^\beta \cap s_{z+1}^\alpha$, $x_\gamma\defas x+\e\tau_\gamma\tb_\gamma\in T^\gamma \cap s_{z+1}^\alpha$. The line $L$ intersects the segment $[x_\beta;x_\gamma]$, and thus either $T^\beta$ or $T^\gamma$. To see this, we let ${y_\lambda\defas \lambda x_\beta+(1-\lambda)x_\gamma}$ for $\lambda\in [0,1]$. Note that 
    \begin{equation*}
    \langle y_0-x,\nu\rangle=\e\tau_\gamma\langle\tb_\gamma,\nu\rangle=\e\tau_\gamma\big(\langle\tb_\gamma,\tb_\alpha^\perp\rangle\langle\tb_\alpha^\perp,\nu\rangle+\langle\tb_\gamma,\tb_\alpha\rangle\langle\tb_\alpha,\nu\rangle\big)=\e\Big(\frac{\sqrt{3}}{2}\langle\tb_\alpha^\perp,\nu\rangle+\tau_\gamma\langle\tb_\gamma,\tb_\alpha\rangle\langle\tb_\alpha,\nu\rangle\Big)\, ,
    \end{equation*}
     and analogously $\langle y_1-x,\nu\rangle=\e\big(\tfrac{\sqrt{3}}{2}\langle\tb_\alpha^\perp,\nu\rangle+\tau_\beta\langle\tb_\beta,\tb_\alpha\rangle\langle\tb_\alpha,\nu\rangle\big)$. In combination with \eqref{cond:sign}, this yields
    \begin{equation}\label{est:product}
    \langle y_0-x,\nu\rangle\langle y_1-x,\nu\rangle=\e^2\Big(\frac{3}{4}\langle\tb_\alpha^\perp,\nu\rangle^2-\frac{1}{4}\langle\tb_\alpha,\nu\rangle^2\Big)\leq\frac{3}{8}-\frac{3}{8}=0\, ,
    \end{equation}
    where we used that $\langle\tb_\alpha^\perp,\nu\rangle^2=\langle\tb_\alpha,\nu^\perp\rangle^2\leq\tfrac{1}{4}$ and $\langle\tb_\alpha,\nu\rangle^2\geq\tfrac{3}{4}$. Now~\eqref{est:product} together with the continuity of the mapping $\lambda\mapsto\langle y_\lambda-x,\nu\rangle$ implies that there exists $\lambda\in[0,1]$ with $\langle y_\lambda-x,\nu\rangle=0$, hence $y_\lambda\in (T^\beta \cup T^\gamma)\cap L$.

\end{proof}

\section{Upper Bound}\label{section:upperbound}
It remains to prove the $\Gamma$-limsup inequality to complete the proof of Theorem~\ref{maintheorem}.
\begin{proposition}  \label{prop:upperbound}
    Let $F_\e$ be as in~\eqref{eq:extended energy}. Then for every $\chi\in L^1(\Omega)$ we have
    \begin{align}\label{ineq: Gammalimsup}
        \Gamma\hbox{-}\limsup_{\varepsilon \to 0} F_\varepsilon(\chi) \leq F(\chi)\, ,
    \end{align}
    where $F$ is given by \eqref{def: limit energy} and the $\Gamma\hbox{-}\limsup$ is with respect to the strong topology in $L^1(\Omega)$. 
\end{proposition}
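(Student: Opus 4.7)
The plan is to follow the standard route for $\Gamma$-limsup inequalities in the framework of anisotropic perimeter functionals represented by an asymptotic cell formula. First, by a density argument (see, \eg \cite[Theorem 3.42]{AFP}) together with the continuity of the (one-homogeneous extension of) $\varphi$ established in Remark~\ref{rem: well-defined} and Reshetnyak's continuity theorem, any $\chi \in BV(\Omega;\{-1,1\})$ can be approximated in $L^1(\Omega)$ by a sequence $(\chi_n) \subset BV(\Omega;\{-1,1\})$ whose jump sets $J_{\chi_n}$ are polyhedral (\ie a finite union of line segments) and with $F(\chi_n) \to F(\chi)$. Since the $\Gamma\hbox{-}\limsup$ is always lower semicontinuous with respect to $L^1$-convergence, the statement reduces to the case of polyhedral $\chi$.

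For such a $\chi$ write $J_\chi \cap \Omega = \bigcup_k \Sigma_k$, where each $\Sigma_k$ is a flat segment of length $\ell_k$ and (constant) normal $\nu_k$. Fix a small parameter $\rho > 0$ (to be sent to $0$ after $\e$). I would cover each $\Sigma_k$ by $N_k \sim \ell_k/\rho$ rotated squares $Q^{\nu_k}_\rho(x_{k,j})$ with centers $x_{k,j}$ equally spaced on $\Sigma_k$, and select in each of them an almost-minimizer $w_\e^{k,j}\in \SF_\e$ of the minimum problem in~\eqref{eq: varphi rho}: namely, $w_\e^{k,j} = u_\e^\pos$ on $\partial_\e^+ Q^{\nu_k}_\rho(x_{k,j})$, $w_\e^{k,j} = u_\e^\neg$ on $\partial_\e^- Q^{\nu_k}_\rho(x_{k,j})$, and
\begin{equation*}
F_\e(w_\e^{k,j}, Q^{\nu_k}_\rho(x_{k,j})) \leq \rho \varphi(\nu_k) + \rho \eta_\e\,, \qquad \eta_\e \to 0 \text{ as } \e \to 0\,.
\end{equation*}
The global recovery sequence $u_\e \in \SF_\e$ is then obtained by gluing: $u_\e = w_\e^{k,j}$ inside each square, and $u_\e = u_\e^\pos$ (resp.\ $u_\e^\neg$) on the remaining lattice points lying in $\{\chi = 1\}$ (resp.\ $\{\chi = -1\}$). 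Since $u_\e^\pos$ and $u_\e^\neg$ are ground states with zero energy on every triangle on which they are constant, the bulk contribution away from the squares vanishes, while the top/bottom of each square matches the background exactly thanks to the imposed boundary conditions. Moreover $\chi(u_\e)$ coincides with $\chi$ outside the squares, whose total area is $O(\rho)$, so $\chi(u_\e) \to \chi$ in $L^1(\Omega)$ after the diagonal $\e \to 0$, $\rho \to 0$. Summing the cell-problem estimates and first letting $\e\to 0$ and then $\rho\to 0$ yields the bound $\limsup_\e F_\e(u_\e) \leq \sum_k \ell_k\varphi(\nu_k) = F(\chi)$.

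The main obstacles in the construction are twofold: (a) at a vertex of the polyhedron $J_\chi$ the normal changes abruptly and no single cell problem covers a neighborhood of that vertex; (b) the cell formula constrains $w_\e^{k,j}$ only on the top/bottom discrete strips $\partial_\e^\pm Q^{\nu_k}_\rho(x_{k,j})$, leaving the side values unconstrained, so two adjacent cubes along the same segment may produce inconsistent spin values across their common side, and, similarly, across the sides of the cubes the interior almost-minimizer need not match the background $u_\e^\pm$. For (a) I would leave a buffer region of diameter $O(\rho)$ around each vertex in which $u_\e$ is set to the appropriate ground state; a crude energy bound gives an $O(\rho)$ contribution per vertex, which vanishes as $\rho \to 0$ since the number of vertices is finite. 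For (b), the key observation is that almost-minimizers of~\eqref{eq: varphi rho} concentrate their energy near the interface line $L^{\nu_k}$ (this is a direct analogue, for the upper-bound variational problem, of Lemma~\ref{lem:independence}): away from $L^{\nu_k}$, $w_\e^{k,j}$ is energetically close to a ground state, and the $\S^1$-invariance of the energy together with the one-dimensional interpolation of Lemma~\ref{lemma:1d interpolation} allows one to slowly rotate $w_\e^{k,j}$ to match the fixed ground states $u_\e^\pos$ / $u_\e^\neg$ in thin transition strips along the sides, at the cost of a negligible additional energy. The careful ordering of limits $\e \to 0$ before $\rho \to 0$ absorbs both correction contributions into $o(1)$ errors, yielding the desired bound along a suitable diagonal subsequence.
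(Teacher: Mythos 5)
Your overall strategy matches the paper's proof: reduce by density and lower semicontinuity to polyhedral $\js{\chi}$, tile each flat segment with rotated $\rho$-cubes carrying near-minimizers of the cell problem~\eqref{eq: varphi rho}, glue with the background ground states $u_\e^\pos$, $u_\e^\neg$, cut out an $O(\rho)$ buffer at each vertex, and pass to the diagonal limit $\e\to 0$, then $\rho\to 0$. There are, however, two points at which the proposal as written does not close.

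The more substantial gap concerns the compatibility of the cube boundary data with the background. For the gluing to carry zero energy across a cube boundary (away from the interface), the boundary datum imposed on $\partial_\e^\pm Q^{\nu_k}_\rho(x_{k,j})$ must coincide with the value $u_\e^\pos$, resp.\ $u_\e^\neg$, used outside the cubes. Since~\eqref{eq: varphi rho} is formulated on a cube centred at the origin, a translated near-minimizer carries the boundary data $u_\e^\pos(\cdot - x_{k,j})$, $u_\e^\neg(\cdot - x_{k,j})$, and these equal $u_\e^\pos$, $u_\e^\neg$ \emph{only if} $x_{k,j}$ lies in the sublattice $\L_\e^1$, which is the period lattice of the two ground states. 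You choose the centres merely ``equally spaced on $\Sigma_k$'', so neighbouring cubes' data and the background need not agree, the cubes may even overlap, and the claimed exact match fails. The paper's fix is exactly the missing step: it rounds the nominal centres to nearby points of $\L_\e^1$ and spaces the cubes by $\rho+5\e$ so that they are pairwise disjoint and all shifted boundary data literally coincide with $u_\e^\pos$, $u_\e^\neg$.

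The second issue is a misreading of~\eqref{def:partialQeps} and a consequent overkill in handling the cube sides. You describe $\partial_\e^\pm Q^\nu$ as ``top/bottom discrete strips'' and conclude that the sides are unconstrained, but by definition $\partial_\e^+Q^\nu\cup\partial_\e^-Q^\nu$ is the full $3\e$-thick discrete collar of $\partial Q^\nu$ \emph{except} for the $6\e$-wide band $|\langle\nu,x\rangle|<3\e$ around the interface. Hence the side data are already prescribed except for $O(1)$ triangles per cube near $L^{\nu_k}$. The remedy you sketch --- establish an analogue of Lemma~\ref{lem:independence} for the minimizers of~\eqref{eq: varphi rho} and then apply the one-dimensional interpolation of Lemma~\ref{lemma:1d interpolation} in thin side strips --- is not established in the paper for the $\varphi$-minimizers, would require the same winding-number control that makes the lower bound delicate, and is in any case unnecessary. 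The paper bounds those few remaining triangles crudely by $O(\e)$ each: they all lie within $5\e$ of $\js{\chi}$, either in the vertex buffer of length $c\rho$ or in the $5\e$-wide gaps between consecutive cubes, giving an error $O(\rho+\e/\rho)$, which vanishes after letting $\e\to 0$ and then $\rho\to 0$. No concentration result for $\varphi$ and no interpolation are needed in the upper bound.
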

\begin{proof} It is not restrictive to assume that $\chi\in BV(\Omega;\{-1,1\})$. Moreover, thanks to Remark~\ref{rem: well-defined}, the density result \cite[Corollary 2.4]{BraConGar}, and the $L^1$-lower semicontinuity of the $\Gamma$-limsup it suffices to prove~\eqref{ineq: Gammalimsup} for $\chi \in BV(\Omega;\{-1,1\})$ such that $\js{\chi}$ is polygonal, \ie $\js{\chi}=\bigcup_{n=1}^N\Gamma_n$, where $\Gamma_n$ are line segments satisfying $\H^1(\Gamma_n\cap\partial\Omega)=0$. To simplify the exposition we restrict ourselves to the case $\js{\chi}=\Gamma_1\cup\Gamma_2$ with $\Gamma_1=[x_0;x_1]$, $\Gamma_2=[x_1;x_2]$, $x_0,x_1,x_2\in\R^2$, \ie the two segments have one common endpoint. The general case then follows by repeating the construction on each line segment.

\begin{step}{1} (Construction of a recovery sequence)
Denoting by $\ell_1,\ell_2$ the length of $\Gamma_1$, $\Gamma_2$ and by $\nu_1,\nu_2$ the outer unit normal to the set $\{\chi=1\}$ on $\Gamma_1$, $\Gamma_2$, upon relabeling we can assume that $x_1=x_0+\ell_1\nu_1^\perp$, $x_2=x_1+\ell_2\nu_2^\perp$. 
Moreover, we have
\begin{align}\label{eq:energyu}
F(\chi)=\ell_1\varphi(\nu_1)+\ell_2\varphi(\nu_2)\, ,
\end{align}
where $\varphi$ is as in \eqref{eq: varphi rho}.
Let $\rho>0$ be sufficiently small and let $u_{\e,\rho}^1\in\SF_\e$ and $u_{\e,\rho}^2\in\SF_\e$ be admissible for the minimum problems defining $\varphi(\nu_1)$, $\varphi(\nu_2)$, respectively with
\begin{align}\label{localoptimality}
\lim_{\e\to 0}F_\varepsilon(u_{\e,\rho}^1, Q^{\nu_1}_\rho) = \rho\, \varphi(\nu_1)\quad\text{and}\quad \lim_{\e\to 0}F_\e(u_{\e,\rho}^2, Q_\rho^{\nu_2})=\rho\, \varphi(\nu_2)\, .
\end{align}

We now start constructing a recovery sequence for $\chi$ by subdividing $\Gamma_1$ and $\Gamma_2$ into segments of length of order $\rho$ and suitable shifting $u_{\e,\rho}^1$, $u_{\e,\rho}^2$ along these segments. In doing so we need to leave out a small region close to the common endpoint $x_1$. Namely, denoting by  $\theta\in(0,\pi]$ the angle between $\Gamma_1$ and $\Gamma_2$ we choose $c=c(\nu_1,\nu_2)>0$ with $c\geq\frac{1}{2}+\frac{1}{2}\cot(\frac{\theta}{2})$  and we only subdivide the smaller segments $[x_0;x_1-c\rho\nu_1^\perp]$ and $[x_1+c\rho\nu_2^\perp;x_2]$ as follows. We set $M_{\e,\rho}^1\defas\lfloor\frac{\ell_1-c\rho}{\rho+5\e}\rfloor$, $M_{\e,\rho}^2\defas\lfloor\frac{\ell_2-c\rho}{\rho+5\e}\rfloor$ and we choose lattice points
\begin{align*}
&x_{m,1}^\e\in B_{2\e}\big(x_0+m(\rho+5\e)\nu_1^\perp\big)\cap\L_\e^1\quad \text{for}\ m\in\{0,\ldots,M_{\e,\rho}^1\}\, ,\\
&x_{m,2}^\e\in B_{2\e}\big(x_1+(c\rho+m(\rho+5\e))\nu_2^\perp\big)\cap\L_\e^1\quad \text{for}\ m\in\{0,\ldots,M_{\e,\rho}^2\}\, .
\end{align*} 
Note that the constant $c$ and the lattice points $x_{m,1}^\e$, $x_{m,2}^\e$ are chosen in such a way that, for $\e$ small enough,
\begin{equation*}
U_\rho\defas\bigcup_{m=0}^{M_{\e,\rho}^1} Q_\rho^{\nu_1}(x_{m,1}^\e)\cup \bigcup_{m=0}^{M_{\e,\rho}^2} Q_\rho^{\nu_2}(x_{m,2}^\e)\, 
\end{equation*}
is a union of pairwise disjoint cubes, see Figure~\ref{fig:limsup}. 
This allows us to define $u_{\e,\rho}\in\SF_\e$ by setting
\begin{equation*}
u_{\e,\rho}(x)\defas
\begin{cases}
u_{\e,\rho}^1(x-x_{m,1}^\e) &\text{if}\ x\in Q_\rho^{\nu_1}(x_{m,1}^\e) \, ,\ m\in\{0,\ldots,M_{\e,\rho}^1\}\, ,\\
u_{\e,\rho}^2(x-x_{m,2}^\e) &\text{if}\ x\in Q_\rho^{\nu_2}(x_{m,2}^\e) \, ,\ m\in\{0,\ldots,M_{\e,\rho}^2\}\, ,\\
u_\e^\pos(x) &\text{if}\ x\in\overline{\{\chi=1\}}\setminus U_\rho\, ,\\
u_\e^\neg(x) &\text{if}\ x\in\{\chi=-1\}\setminus U_\rho\, .
\end{cases}
\end{equation*}
We observe that since $x_{m,1}^\e,x_{m,2}^\e$ belong to the sublattice $\L_\e^1$, the boundary conditions satisfied by the shifted functions $u_{\e,\rho}^1(\,\cdot-x_{m,1}^\e)$, $u_{\e,\rho}^2(\,\cdot-x_{m,2}^\e)$ are compatible one with each other and with $u_\e^\pos$ and $u_\e^\neg$ on $\Omega\setminus U_\rho$. In particular, if $x\in\Omega$ is such that $\dist(x,\js{\chi})\geq\rho/2$ then $\chi(u_{\e,\rho})(x)=\chi(x)$, which implies that $\|\chi(u_{\e,\rho})-\chi\|_{L^1(\Omega)}\leq C\rho\H^1(\js{\chi})\to 0$ as $\rho\to 0$.
\end{step}

\begin{figure}[H]
    \scalebox{0.85}{
        \includegraphics{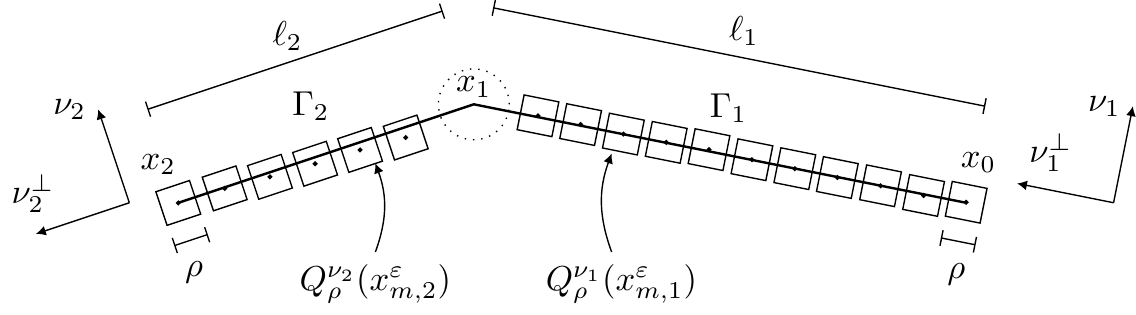}
    }
    \caption{Covering the segments $\Gamma_1$ and $\Gamma_2$ with cubes of side $\rho$ in the $\Gamma$-$\limsup$ construction.}
    \label{fig:limsup}
\end{figure}

\begin{step}{2}  (Energy estimate) 
In order to estimate $F_\e(u_{\e,\rho})$ we start by rewriting the energy as
\begin{equation}\label{eq:energy on cubes}
F_\e(u_{\e,\rho})=\sum_{m=0}^{M_{\e,\rho}^1}F_\e\big(u_{\e,\rho},Q_\rho^{\nu_1}(x_{m,1}^\e)\big)+\sum_{m=0}^{M_{\e,\rho}^2}F_\e\big(u_{\e,\rho}, Q_\rho^{\nu_2}(x_{m,2}^\e)\big)+\hspace*{-1.5em}\sum_{\substack{T\in\T_\e(\Omega)\\T\cap(\Omega\setminus U_\rho)\neq\emptyset}}\hspace*{-1.5em}F_\e(u_{\e,\rho},T)\, ,
\end{equation}
and we estimate the terms on the right-hand side of~\eqref{eq:energy on cubes} separately. Let us first consider the energy on triangles $T\in\T_\e(\Omega)$ with $T\cap(\Omega\sm U_\rho)\neq\emptyset$. Suppose that $\dist(T,\js{\chi})>5\e$. Then, if $T\subset\Omega\sm U_\rho$ we have $u_{\e,\rho}=u_\e^\pos$ or $u_{\e,\rho}=u_\e^\neg$ on $T$, so that $F_\e(u_{\e,\rho},T)=0$. If instead $T\cap U_\rho\neq\emptyset$, the fact that $\dist(T,\js{\chi})>5\e$ ensures that $T$ intersects a cube in $U_\rho$ in a region where the boundary conditions are prescribed. Thus, using once more the compatibility of the boundary conditions, we infer that $F_\e(u_{\e,\rho},T)=0$. This implies that
\begin{align}\label{est:energy outside cubes}
\sum_{\substack{T\in\T_\e(\Omega)\\T\cap(\Omega\setminus U_\rho)\neq\emptyset}}\hspace*{-1.5em}F_\e(u_{\e,\rho},T) &\leq 3\e\#\{T\in\T_\e(\Omega)\colon T\cap(\Omega\sm U_\rho)\neq\emptyset\, ,\ \dist(T,\js{\chi})\leq 5\e\}\leq C(\rho+\e/\rho)\, ,
\end{align}
where to obtain the first inequality we used $F_\e(u_{\e,\rho},T)\leq 9\e$, while the second inequality follows by counting triangles contained either in $\big([x_1-c\rho\nu_1^\perp;x_1]\cup[x_1;x_1+c\rho\nu_2^\perp]\big)+B_{6\e}(0)$ or in $\big(\partial U_\rho\cap\js{\chi}\big)+B_{6\e}(0)$.

Combining~\eqref{eq:energy on cubes},~\eqref{est:energy outside cubes}, and~\eqref{localoptimality} we deduce that
\begin{equation}\label{est:limsup}
\begin{split}
\limsup_{\e\to 0}F_\e(u_\e) &\leq\limsup_{\e\to 0}\, (M_{\e,\rho}^1+1)\, F_\e(u_{\e,\rho}^1,Q_\rho^{\nu_1})+\limsup_{\e\to 0}\, (M_{\e,\rho}^2+1)\, F_\e(u_{\e,\rho}^2,Q_\rho^{\nu_2})+C\rho\\
&\leq \Big(\Big\lfloor\frac{\ell_1}{\rho}\Big\rfloor+1\Big)\rho\, \varphi(\nu_1)+\Big(\Big\lfloor\frac{\ell_2}{\rho}\Big\rfloor+1\Big)\rho\, \varphi(\nu_2)+C\rho\, .
\end{split}
\end{equation}
Since the latter term converges to $\ell_1\varphi(\nu_1)+\ell_2\varphi(\nu_2)$ as $\rho\to 0$, thanks to~\eqref{eq:energyu} and~\eqref{est:limsup}, a diagonal argument provides us with a sequence $(u_\e)=(u_{\e,\rho(\e)})$ with $\chi(u_\e)\to\chi$ in $L^1(\Omega)$ and satisfying $\limsup_\e F_\e(u_\e)\leq F(\chi)$, from which we finally deduce \eqref{ineq: Gammalimsup}.
\end{step}
\end{proof}

\noindent {\bf Acknowledgments.}  The work of A.\ Bach and M.\ Cicalese was supported by the DFG Collaborative Research Center TRR 109, ``Discretization in Geometry and Dynamics''. G.\ Orlando has received funding from Alexander von Humboldt Foundation and the European Union's Horizon 2020 research and innovation programme under the Marie Sk\l odowska-Curie grant agreement No 792583. The work of L.\ Kreutz was funded by the Deutsche Forschungsgemeinschaft (DFG, German Research Foundation) under Germany's Excellence Strategy EXC 2044 -390685587, Mathematics M\"unster: Dynamics--Geometry--Structure. 
\bigskip

\end{document}